\documentclass[12pt,a4paper]{article}
\usepackage[nottoc,numbib]{tocbibind}
\usepackage[english]{babel}
\usepackage[utf8x]{inputenc}
\usepackage{amsfonts,longtable,amssymb,amsmath,latexsym,dsfont}
\usepackage{wrapfig}
\usepackage[title]{appendix}
\usepackage{amsthm, amsmath, amssymb, amsfonts, amscd, amscd, geometry}
\newsavebox{\ssa}
\usepackage{bbm}

\geometry{left=2.5cm}
\geometry{right=2.5cm}
\geometry{top=1cm}
\geometry{bottom=2cm}

\newenvironment{sproof}{%
  \proof}{\endproof}

\newtheorem{thm}{Theorem}[subsection]
\newtheorem{prop}[thm]{Proposition}
\newtheorem{lemma}[thm]{Lemma}
\newtheorem{cor}[thm]{Corollary}
\newtheorem{cord}[thm]{Corollary/Definition}
\newtheorem{conj}[thm]{Conjecture}

\theoremstyle{definition}
\newtheorem{def0}[thm]{Definition}

\theoremstyle{remark}
\newtheorem{rem}{Remark}
\newtheorem{ex}[thm]{Example}

\newcommand{\lthm}{Łoś's theorem }
\newcommand{\Rep}{\textrm{Rep}}
\newcommand{\Hom}{\textrm{Hom}}
\newcommand{\End}{\textrm{End}}
\newcommand{\Id}{\textrm{Id}}

\newcommand{\Res}{\textrm{Res}}
\newcommand{\Mat}{\textrm{Mat}}
\newcommand{\Aut}{\textrm{Aut}}
\newcommand{\Ind}{\textrm{Ind}}
\newcommand{\Tr}{\textrm{Tr}}
\newcommand{\Fun}{\textrm{Fun}}
\newcommand{\mspec}{\textrm{mspec}}

\long\def\/*#1*/{}

\title{Classification of simple algebras in the Deligne category $\Rep(S_t)$}

\author{Nate Harman, Daniil Kalinov}
\date{}

\begin{document}

\maketitle

\begin{abstract}
    We classify simple associative and Lie algebras inside the Deligne categories $\Rep(S_t)$, answering a question posed by Etingof.
\end{abstract}

\tableofcontents

\section*{Introduction}

Deligne defined a family of categories $\Rep(S_t)$, where $t\in F$ is an arbitrary element of a ground field $F$ of characteristic $0$ \cite{deligne2007categorie}. These categories interpolate, in an appropriate sense, the categories of representations of symmetric groups as rigid symmetric monoidal $F$-linear categories.

In \cite{etingof2014representation} and \cite{etingof2016representation} Etingof laid out a program of ``Representation Theory in Complex Rank" which hopes to classify certain algebraic structures internal to these Deligne categories (and their relatives for other families of groups) over the complex numbers and to interpolate more complicated objects in representation theory.

One such problem proposed by Etingof was to characterize the simple algebras inside $\Rep(S_t)$ for all $t\in \mathbb{C}$, the conjectural characterization being that all such algebras should be interpolations of families of compatible simple $S_n$-algebras (that is, $\mathbb{C}$-algebras with $S_n$-equivariant multiplication, and no non-trivial $S_n$-invariant ideals). Where by algebras we could mean associative, commutative, or Lie.

The case of simple commutative algebras was studied by Sciarappa in \cite{sciarappa2015simple}. He characterized which families of simple commutative $S_n$-algebras have interpolations to the Deligne categories $\Rep(S_t)$, and showed that if $t$ is transcendental, these form a complete list of the isomorphism classes of simple commutative algebras in $\Rep(S_t)$. However, his methods are unable to rule out the possibility of certain ``exotic" simple commutative algebras existing at special algebraic values of $t$.

In \cite{harman2016deligne} the first author gave a new interpretation for the Deligne categories $\Rep(S_t)$ at algebraic values of $t$ in terms of model theory and the modular representation theory of symmetric groups, and using this sketched a proof that indeed Sciarappa's classification of simple commutative algebras holds at algebraic values of $t$ too.

\medskip

In this paper we complete this program by classifying the simple associative and Lie algebras inside $\Rep(S_t)$ for all values of $t\notin \mathbb Z_{\ge 0}$. The paper is structured as follows.

\medskip

The first two sections are somewhat auxiliary. In the first section we review the definition and properties of the Deligne category $\Rep(S_t)$, with a focus on the model theoretic ultrafilter interpretation introduced in \cite{harman2016deligne}. The second section contains some technical lemmas about subgroups of symmetric groups, $S_n$-algebras, and ultraproducts thereof.

\medskip

In section 3 we give the classification of associative algebras in $\Rep(S_t)$. Theorem \ref{assoc} says these are classified by  a choice of a subgroup $H$ of some symmetric group $S_j$, a projective representation of $H$, and an object of $\Rep(S_{t-j})$. In particular, these exactly correspond to families of simple associative $S_N$-algebras over $\mathbb{C}$ which have an interpolation in $\Rep(S_t)$.  As in the commutative case, there are no ``exotic" simple algebras appearing for special values of $t$.

\medskip

In section 4 we focus on the case of Lie algebras, and here the situation is somewhat more complicated. Given an object $V$ of dimension $0$ in a rigid symmetric tensor category, one can construct a Lie algebra object $\mathfrak{psl}(V)$ of dimension $-2$ as a subquotient of $V\otimes V^*$.  By combining this construction with induction from subgroups we see that there are many examples of simple Lie algebra objects in $\Rep(S_t)$ for $t$ algebraic which do not come from interpolating simple $S_n$-Lie algebras over $\mathbb{C}$. However using the ultraproduct interpretation we can view these as interpolating $S_n$-Lie algebras in large positive characteristic. Theorem \ref{lieclass} says that this is all that can happen, and every simple Lie algebra object in $\Rep(S_t)$ indeed comes from either  interpolating simple $S_n$-Lie algebras over $\mathbb{C}$, or by interpolating these algebras built from algebras of the form $\mathfrak{psl}(V)$ in positive characteristic.

\subsection*{Acknowledgement} 

The second author wants to thank Pavel Etingof for suggesting this project to him  and for the valuable discussions they had about it. The first author is partially supported by NSF postdoctoral fellowship award number 1703942.

\section{Deligne category $\Rep(S_t)$.}

\subsection{Preliminary notations and definitions.}

First we need to establish some notations which we will frequently use.
\begin{def0}
 By $\textbf{\Rep}(S_n)$ denote the category of finite-dimensional representations of $S_n$\footnote{Symmetric group of rank $n$.} over $\overline{\mathbb Q}$. By $\textbf{\Rep}_{p}(S_n)$ denote the category of finite-dimensional representations of $S_n$ over $\overline{\mathbb F}_p$.
\end{def0}

Note that for $p > n$ the latter category is semi-simple and the irreducible objects are the same as in characteristic $0$. More precisely, the irreducible representations over $\overline{\mathbb F}_p$ can be obtained as a reduction modulo $p$ of irreducible $\overline{\mathbb Q}$-representations which sit inside irreducible $\mathbb C$-representations as a $\overline{\mathbb{Q}}$-lattice of full rank.  Below we will always work in the positive characteristic with $p > n$.

\begin{def0}
 By $\mathcal A_n$ denote the subgroup of $S_n$ consisting of even permutations.
\end{def0}

\begin{def0}
 For a Young diagram $\lambda$, by $l(\lambda)$ denote the number of rows of the diagram and by $|\lambda|$ the number of boxes.
\end{def0}

The irreducible objects of both of these categories are in 1-1 correspondence with the  Young diagrams of weight $n$. So let us make the following definitions.

\begin{def0}
 By $X(\lambda)$ denote the irreducible representation of $S_{|\lambda|}$ corresponding to the Young diagram $\lambda$. The field over which this representation is defined will be evident from the context.
\end{def0}
\begin{def0}
 Let $X_k$ denote the vector space $F^k$ with a structure of the representation of $S_k$ given by interchanging the basis vectors. Here also the field $F$ will be evident from the context.
\end{def0}

Below we will frequently use the following operation on Young diagrams:
\begin{def0}
 For a Young diagram $\lambda$ and an integer $n \ge \lambda_1+|\lambda|$ denote by $\lambda|_n$ the Young diagram $(n - |\lambda|, \lambda_1, \dots, \lambda_{l(\lambda)})$.
\end{def0}

We will also need the following definition in the next subsection:

\begin{def0}
 Denote by $FP_{n,m}$ a vector space over a field $F$ with a basis given by all possible partitions of an $n+m$-element set. Graphically an element of the basis is represented by the two rows of $\bullet$'s, the first of length $n$ and the second of length $m$, where all $\bullet$'s belonging to the same part of the partition are connected by edges. So, in other words, it is a graph on $n+m$ vertices, the set of connected components of which corresponds to a partition of $n+m$ (It doesn't matter which graph with the fixed set of the connected components to choose).
 
 Also define a map $\phi_t^{n,m,k}: FP_{m,k} \times FP_{n,m} \to FP_{n,k}$ for $t \in F$ as follows.
 Consider $\lambda \in FP_{n,m}$ and $\mu \in FP_{m,k}$. Take a vertical concatenation of graphical representations of the corresponding partitions (the last one on top) and identify the rows of length $m$. After this we are left with a partition of three rows of $\bullet$'s of length $n,m$ and $k$. Now let's denote by $l(\mu,\lambda)$ the number of connected components consisting purely of $\bullet$'s lying in the second row. Also consider a partition of rows $n,k$ consisting of the same connected components as the partition of rows $n,m,k$ but with elements of the second row deleted, denote it by $\mu \cdot \lambda$. Then $\phi_t^{n,m,k}(\mu, \lambda) = t^{l(\mu,\lambda)}\mu \cdot \lambda$.
 
 Define $FP_{n}(t)$ to be $FP_{n,n}$ with a structure of algebra given by the map $\phi_t^{n,n,n}$. This algebra is called the partition algebra and it was introduced by Purdon in \cite{purdon1991potts}.
\end{def0}

\subsection{Definition and properties of $\Rep(S_t)$.}

From now one $t \in \mathbb C$ is any number.

Here we will briefly discuss the definition of $\Rep(S_t)$ and state some important properties it enjoys. For more about this see \cite{deligne2007categorie}, \cite{comes2009blocks}, \cite{etingof2014representation}.\footnote{For the general theory of tensor categories see \cite{etingof2016tensor}.}

First we need to define a preliminary skeletal category $\Rep_0(S_t)$:
\begin{def0}
  $\Rep_0(S_t)$ is a skeletal tensor category\footnote{Note that the notion of tensor category slightly differs from \cite{etingof2016tensor} since we do not require our categories to be abelian, nevertheless all categories we are going to work with will be abelian}. Its objects are elements of $\mathbb Z_{\ge 0}$, which can be graphically represented by rows of $\bullet$'s, and denoted by $[n]$.
  
  The set of morphisms $\Hom_{\Rep_0(S_t)}([n],[m])$ is equal to $\mathbb CP_{n,m}$ and the composition maps are given by $\phi_t^{n,m,k}$.
  
  Tensor product on objects is defined by the horizontal concatenation of rows and on morphisms by the horizontal concatenation of diagrams. All objects $[n]$ are self-dual.
\end{def0}

Now we can define the Deligne category $\Rep(S_t)$ itself:
\begin{def0}
 The Deligne category $\Rep(S_t)$ is a Karoubian envelope of the additive envelope of $\Rep_0(S_t)$. It is a rigid symmetric tensor category.
\end{def0}

\begin{def0}
 An object $[1]$ is called the fundamental representation and is denoted by $\mathcal X$.
  An object $[0]$ is called the trivial representation and is denoted by $\mathbb C$ (by a slight abuse of notation).
\end{def0}

The important properties of $\Rep(S_t)$ are listed below:
\begin{prop}
\textbf{a)} For $t \notin \mathbb Z_{\ge 0}$ $\Rep(S_t)$ is a semisimple category. \\
\textbf{b)} For $t \notin \mathbb Z_{\ge 0}$ the simple objects of $\Rep(S_t)$ are in 1-1 correspondence with Young diagrams of arbitrary size. They are denoted by $\mathcal X(\lambda)$. Moreover $\mathcal X(\lambda)$ is a direct summand in $[|\lambda|]$. \\ 
\textbf{c)} The categorical dimension of $\mathcal X$ is $t$ and of $\mathbb C$ is $1$. \\
\textbf{d)} All $\mathcal X(\lambda)$ are self-dual.
\end{prop}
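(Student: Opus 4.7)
The plan is to reduce all four statements to properties of the partition algebras $\mathbb C P_n(t) = \End_{\Rep_0(S_t)}([n])$. Since $\Rep(S_t)$ is by construction the Karoubian envelope of the additive envelope of $\Rep_0(S_t)$, and each object $[n]$ is the tensor power $\mathcal X^{\otimes n}$, the structure of $\Rep(S_t)$ is controlled by these endomorphism algebras together with the inclusions $\End([n]) \hookrightarrow \End([n+1])$ coming from tensoring with the identity strand.

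For parts (a) and (b), I would appeal to Martin's theorem that $\mathbb C P_n(t)$ is semisimple precisely when $t \notin \{0, 1, \dots, 2n-2\}$; in particular, whenever $t \notin \mathbb Z_{\ge 0}$ every $\mathbb C P_n(t)$ is semisimple. Part (a) follows because a Karoubian additive category all of whose endomorphism algebras are semisimple is itself semisimple. For part (b), once every $\mathbb C P_n(t)$ is semisimple, one uses the classification of its simple modules, which are indexed by Young diagrams $\lambda$ with $|\lambda| \le n$, together with the fact that the simple indexed by $\lambda$ of size exactly $n$ appears for the first time in $\mathbb C P_n(t)$ but not in $\mathbb C P_{n-1}(t)$. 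The corresponding primitive idempotents cut out pairwise non-isomorphic simple objects $\mathcal X(\lambda)$, each realized as a direct summand of $[|\lambda|]$, and since every simple of $\Rep(S_t)$ appears as a summand of some $[n]$, this list is exhaustive.

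Part (c) is a direct diagrammatic computation: the categorical dimension of $\mathcal X = [1]$ is the trace of $\Id_{[1]}$, which in the diagrammatic language is obtained by closing the single strand into a loop. Applying $\phi_t^{1,1,0}$ produces exactly one connected component lying entirely in the closed middle row, so the scalar is $t^{1}=t$. The dimension of $\mathbb C = [0]$ is $1$ by a similar (trivial) diagram count. For part (d), I would argue by induction on $|\lambda|$: the object $[n]$ is self-dual in $\Rep_0(S_t)$ because evaluation and coevaluation are given by ``cap'' and ``cup'' pairing partitions, so the duality functor permutes the simple summands of each $[n]$. The base case $\mathcal X(\emptyset)=\mathbb C$ is obvious, and assuming by induction that all $\mathcal X(\mu)$ with $|\mu|<|\lambda|$ are self-dual, the fact that $\mathcal X(\lambda)$ appears in $[|\lambda|]$ with multiplicity one, while every other simple summand of $[|\lambda|]$ is indexed by some $\mu$ with $|\mu|<|\lambda|$, forces $\mathcal X(\lambda)^*\cong \mathcal X(\lambda)$.

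The main obstacle is the semisimplicity criterion for the partition algebras underlying part (a); everything else is essentially formal once this is granted. An alternative route, closer to Deligne's original treatment, would be to work generically over $\mathbb C(t)$ and descend to specific values of $t$ by a flatness/specialization argument, with the exceptional integer values corresponding precisely to the places where characters collide; but invoking Martin's theorem is cleaner and immediately furnishes the semisimple structure needed for (b), (c), and (d).
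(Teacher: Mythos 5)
The paper never proves this proposition --- it is quoted as background with references to Deligne and Comes--Ostrik --- so your argument has to be judged against the standard literature route, which it essentially follows: reduce everything to the partition algebras $\mathbb C P_n(t)=\End([n])$ and invoke Martin's semisimplicity criterion $t\notin\{0,1,\dots,2n-2\}$. Parts (a), (b), (c) are in order up to one small gap in (a): knowing that each $\End([n])$ is semisimple is not by itself enough, since a Karoubian envelope is semisimple only if the endomorphism algebras of \emph{all} objects, in particular of direct sums $[n_1]\oplus\dots\oplus[n_k]$ with distinct $n_i$, are semisimple (otherwise nonzero Hom spaces with vanishing composites could produce a radical). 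This is easily repaired: for $t\neq 0$ the composite $[0]\to[1]\to[0]$ through the one-block partitions equals $t\cdot\mathrm{id}$, so $[n]$ is a summand of $[n+1]$; hence every object is a summand of some $[N]^{\oplus k}$, and its endomorphism algebra is a corner $e\,\mathrm{Mat}_k(\mathbb C P_N(t))\,e$ of a semisimple algebra, hence semisimple. With that remark, (a) and (b) go through, and the loop computation giving $\dim\mathcal X=t$ in (c) is correct.

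The genuine gap is in (d). Your induction rests on the claim that $\mathcal X(\lambda)$ occurs in $[|\lambda|]$ with multiplicity one, and that is false: the multiplicity of $\mathcal X(\lambda)$ with $|\lambda|=n$ in $[n]$ equals the dimension of the simple $\mathbb C P_n(t)$-module labelled by $\lambda$, which is the number $f^\lambda$ of standard tableaux of shape $\lambda$; for instance $\mathcal X((2,1))$ occurs in $[3]$ with multiplicity $2$. Even restricting to the multiplicity-one constituents, your argument only shows that duality permutes the simples $\mathcal X(\lambda)$ with $|\lambda|=n$ while preserving multiplicities, which does not rule out, say, $\mathcal X((3))^*\simeq\mathcal X((1,1,1))$ (both have multiplicity one in $[3]$). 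To close this you need an additional input: for example, duality on summands of $[n]$ is induced by the anti-automorphism of $\mathbb C P_n(t)$ that flips diagrams upside down, and one checks this anti-automorphism fixes the isomorphism classes of simple modules; or, in the spirit of this paper, use the ultraproduct description $\mathcal X(\lambda)=\prod_{\mathcal F}X(\lambda|_{t_n})$ together with the self-duality of irreducible symmetric-group representations and Łoś's theorem, which gives (d) immediately.
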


Also we have an important universal property of the Deligne category:
\begin{prop}
(8.3 in \cite{deligne2007categorie})
For any $\mathbb C$-linear symmetric tensor category $\mathcal T$, the category of $\mathbb C$-linear symmetric tensor functors from $\Rep(S_t)$ to $\mathcal T$ is equivalent to the category $\mathcal T^f_t$ of commutative Frobenius algebras in $\mathcal T$ of dimension $t$. The equivalence is obtained using the functor which sends a functor $F$ to an object $F(\mathcal X)$.
\end{prop}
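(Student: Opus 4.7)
The plan is to exhibit both directions of the equivalence and verify they are mutually inverse. The forward direction is essentially tautological, while the real work is in constructing the inverse functor, which amounts to giving a presentation of $\Rep(S_t)$ as the universal category containing a commutative Frobenius algebra of dimension $t$.

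First I would construct the forward functor. The fundamental object $\mathcal X = [1]$ carries a canonical commutative Frobenius algebra structure in $\Rep(S_t)$: the multiplication $m: [2] \to [1]$ is the partition diagram whose single part connects both top dots to the single bottom dot; the unit $u: [0] \to [1]$ is the partition with one singleton part; comultiplication and counit are given by flipping these diagrams upside down (which makes sense since $\mathcal X$ is self-dual). Commutativity, (co)associativity, the unit/counit axioms, and the Frobenius condition are all immediate from the combinatorial definition of $\phi_t^{n,m,k}$ — these are easy pictorial identities between partition diagrams. The dimension of $\mathcal X$ is $t$ by part c) of the previous proposition. Thus any $\mathbb C$-linear symmetric tensor functor $F$ sends $\mathcal X$ to a commutative Frobenius algebra $F(\mathcal X)$ of dimension $t$ in $\mathcal T$, giving the functor $F \mapsto F(\mathcal X)$.

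For the inverse, given $(A,m_A,u_A,\Delta_A,\varepsilon_A) \in \mathcal T^f_t$, I would define $F_A: \Rep_0(S_t) \to \mathcal T$ by $[n] \mapsto A^{\otimes n}$ on objects. On morphisms, every partition diagram in $\mathbb C P_{n,m}$ can be factored as a composition of tensor products of the elementary pieces: merge, split, cup, cap, unit, counit, and symmetry. The prescription is to send each such elementary piece to the corresponding structure map on $A$ and extend by tensor and composition. Monoidality and symmetry then follow by construction, and one extends canonically to the additive envelope and then the Karoubi envelope to land in $\Rep(S_t)$.

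The main obstacle, and the heart of the proof, is well-definedness of $F_A$ on morphisms: different factorizations of the same partition diagram must give the same morphism of $A^{\otimes n}$. Concretely, one must show that the relations among partition diagrams (imposed by $\phi_t^{n,m,k}$) are generated by the commutative Frobenius axioms together with the single scalar relation $\varepsilon_A \circ u_A = t \cdot \mathrm{id}_{\mathbf 1}$. The most delicate point is tracking scalars: the rule $\phi_t^{n,m,k}(\mu,\lambda) = t^{l(\mu,\lambda)} \, \mu\cdot\lambda$ weights closed middle loops by $t$, which corresponds on the $\mathcal T$ side to the fact that a closed loop evaluates to the categorical trace $\mathrm{tr}(\mathrm{id}_A) = \dim(A) = t$ — and the dimension hypothesis on $A$ is exactly what makes the two sides agree. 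This is the classical PROP presentation of commutative Frobenius algebras (equivalently, the cobordism hypothesis in dimension $2$ restricted to $0$-manifolds inside $1$-cobordisms with a dimension parameter), which can be verified by a standard normal-form argument on partition diagrams.

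Finally, I would verify that the two constructions are mutually inverse: for a Frobenius algebra $A$, the algebra $F_A(\mathcal X) = A$ with its induced Frobenius structure is $A$ itself by construction; conversely, given a tensor functor $F$, the functor $F_{F(\mathcal X)}$ agrees with $F$ on $\mathcal X$ hence on all of $\Rep_0(S_t)$ (since morphisms are compositions of the Frobenius generators), and a monoidal natural isomorphism extends uniquely. Checking functoriality of all these assignments on the level of $1$-morphisms (natural transformations versus Frobenius algebra homomorphisms) is then a routine diagram chase.
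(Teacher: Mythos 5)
The paper does not give a proof of this proposition: it is a direct citation of Proposition~8.3 in \cite{deligne2007categorie}, so there is no in-paper argument to compare against. Your outline --- exhibit $\mathcal X$ as a commutative Frobenius algebra, build the inverse $F_A$ by decomposing partitions into Frobenius generator morphisms, verify well-definedness by a normal-form argument --- is the standard route and has the correct overall shape.

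There is, however, a real gap in the well-definedness step. You claim that the relations among partition diagrams are generated by the commutative Frobenius axioms together with the scalar relation $\varepsilon\circ u = t$. That is not true. The free PROP on a commutative Frobenius algebra is the $2$d cobordism category, which contains genus-raising morphisms; the partition category is the further quotient by the \emph{special} relation $\mu\circ\Delta = \mathrm{id}_{[1]}$ (split a dot and immediately re-merge it and one gets the identity partition on $[1]$, whereas for a general commutative Frobenius algebra $\mu\circ\Delta$ is the ``handle operator'' and need not be invertible, let alone the identity). So for an arbitrary commutative Frobenius algebra $A$ your prescription for $F_A$ is not well defined: two factorizations of the same partition differing by a handle would disagree. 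What rescues the argument is precisely the paper's choice of definition, which you pass over: the Frobenius form on $T$ is required to be the \emph{categorical trace} $\Tr$ of left multiplication rather than an arbitrary nondegenerate invariant pairing, and the comultiplication dual to $\mu$ under this particular pairing does satisfy $\mu\circ\Delta=\mathrm{id}$ (equivalently, such $T$ is a \emph{special} commutative Frobenius algebra). You need to verify that fact before the normal-form argument closes; as written, the key step would fail for a general Frobenius algebra, and the equivalence of the two notions of Frobenius algebra is part of the content of Deligne's statement.
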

 
 Here by commutative Frobenius algebra in symmetric monoidal category we mean an object $T$ with the  following structure. First it is an associative commutative algebra with structure given by $\mu_T,1_T$. Second this object is rigid (dual objects exist). And finally if we define a map:
 $$
 \Tr: T \xrightarrow{1 \otimes coev_T} T \otimes T \otimes T^* \xrightarrow{\mu_T \otimes 1} T \otimes T^* \xrightarrow{ev_T} 1 \ ,
 $$
 then the pairing $T \otimes T \xrightarrow{\mu_T} T \xrightarrow{\Tr} 1$ is non-degenerate, i.e. corresponds to isomorphism between $T$ and $T^*$ under identification $\Hom(T\otimes T,1) = \Hom(T, T^*)$.
 
\subsection{Ultrafilters and ultraproducts.}

It will be important for us that we can think of $\Rep(S_t)$ for  $t \notin \mathbb Z_{\ge 0}$ as a limit of categories of representations of $S_n$ over $\overline{\mathbb F}_p$, when $n,p \mapsto \infty$. To formalize this statement we will need to introduce ultrafilters and ultraproducts below.

We will quickly define what ultrafilters and ultraproducts are, state their main properties and give some examples. The following discussion is taken from \cite{kalinov2018finite}. For more details see \cite{schoutens2010use}.

\begin{def0}
An ultrafilter $\mathcal F$ on a set $X$ is a subset of $2^{X}$ satisfying the following properties:

$\bullet$ $X \in \mathcal F$ ;

$\bullet$ If $A \in \mathcal F$ and $A \subset B$, then $B \in \mathcal F$ ;

$\bullet$ If $A,B \in \mathcal F$, then $A\cap B \in \mathcal F$ ;

$\bullet$ For any $A\subset X$ either $A$ or $X \backslash A$ belongs to $A$, but not both.
\end{def0}

There is an obvious family of examples of ultrafilters: $\mathcal F_x = \{ A| x \in A \}$ for $x \in X$. Such ultrafilters are called principal. Using Zorn's lemma one can show that there exist  non-principal ultrafilters $\mathcal F$ if the cardinality of $X$ is infinite. Also it follows that all cofinite sets belong to such an $\mathcal F$ (but not all  sets belonging to $\mathcal F$ are cofinite). From now on we will denote by $\mathcal F$ a fixed non-principal ultrafilter on $\mathbb N$. Also by something being true for ``almost all $n$", we will mean that it is true for all $n$ in some $A \in \mathcal F$. Note that by definition of an ultrafilter, if two statements hold for almost all $n$, then their conjunction holds for almost all $n$. Also note that if  for almost all $n$ the disjunction of a finite number of statements holds, then one of them holds for almost all $n$ (if not then each of them holds on some subset $A \notin \mathcal F$ and the union of this subsets is not in $\mathcal F$). We will use these elementary observations quite frequently. 

Let's now define a notion of an ultraproduct. 
\begin{def0}
Suppose we have a collection of sets $S_i$ labeled by natural numbers. Suppose that for almost all $x\in A$ one has $S_x \ne \emptyset$. Then $\prod_{\mathcal F}S_x$ is the quotient of $\prod_{x \in A} S_x$ by the following relation: $\{s_x\} \sim \{s'_x\}$ iff $s_x = s_x'$ for almost all $x$.
If for almost all $x$ one has $S_x = \emptyset$, then $\prod_{\mathcal F}S_x = \emptyset$. 
The set $\prod_{\mathcal F}S_x$ is called the ultraproduct of $S_x$. 
\end{def0}
Usually we will denote $\{ s_x \} \in \prod_{\mathcal F}S_i$ by $\prod_\mathcal F s_x$.

First, let's note that ultraproduct inherits any relation or operation, which was defined for almost all $n$. Indeed to apply it to the elements of ultraproduct we can just apply it to the corresponding sequences of elements, and if it was an operation get a sequence of elements, or if it was a relation get a sequence of Boolean values, which are going to be the same for almost all $n$, since there are finite number of values.

The most important property of ultraproducts is the following:
\begin{thm}\textbf{\lthm} (Theorem 2.3.2 in \cite{schoutens2010use})

Suppose we have a collection of sequences of sets $S^{(k)}_i$ for $k = 1,\dots,m$ and a collection of sequences of elements $f^{(r)}_i$ for $r = 1,\dots, l$ and a  formula of a first order language $\phi(x_1,\dots,x_l, Y_1, \dots, Y_m)$ depending on some parameters $x_i$ and sets $Y_j$. Denote by $S^{(k)} = \prod_{\mathcal F}S^{(k)}_{n}$ and $f^{(r)} = \prod_{\mathcal F} f^{(r)}_n$.  Then 
$\phi(f^{(1)}_n, \dots, f^{(l)}_n, S^{(1)}_n, \dots S^{(m)}_n)$ is true for almost all $n$ iff  $\phi(f^{(1)}, \dots, f^{(l)}, S^{(1)}, \dots S^{(m)})$ is true.
\end{thm}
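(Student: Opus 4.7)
\begin{sproof}
The plan is to proceed by induction on the structural complexity of the first-order formula $\phi$. I will handle atomic formulas as a base case, and then inductively treat the propositional connectives and the quantifiers, reducing everything to the elementary combinatorics of the ultrafilter $\mathcal F$ already spelled out in the paper.

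For the base case, an atomic formula is an equality of terms built from the element parameters $f^{(r)}$ together with the function and constant symbols of the relevant language, or a relation such as membership $x \in Y_j$. Here the statement is immediate from the definition of the ultraproduct: two sequences represent equal elements iff they agree on a set in $\mathcal F$, and $\{f_n\}$ represents an element of $\prod_{\mathcal F} S^{(k)}_n$ iff $f_n \in S^{(k)}_n$ for almost all $n$. Any function or relation symbol is interpreted on the ultraproduct by componentwise application, so each atomic formula transfers directly.

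For the inductive step, conjunction is covered by closure of $\mathcal F$ under finite intersection, as already noted in the paper. Negation uses the defining property that for every $A \subset \mathbb N$ exactly one of $A$ and its complement lies in $\mathcal F$, so $\neg \psi$ holds in the ultraproduct iff $\psi$ fails there iff (by induction) $\psi$ fails for almost all $n$ iff $\neg \psi$ holds for almost all $n$. Disjunction then follows either by De Morgan or directly from the remark in the paper that a finite disjunction holding for almost all $n$ forces some disjunct to do so. Universal quantification reduces to the existential case by another negation.

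The one non-formal step, and the main obstacle, is the existential quantifier $\exists x \in S^{(k)}\,\psi(x,\dots)$. If the statement holds for almost all $n$, I pick witnesses $a_n \in S^{(k)}_n$ (this is the point where the axiom of choice enters) and set $a = \prod_{\mathcal F} a_n$; by the inductive hypothesis applied to $\psi$ with $a$ added as an element parameter, $a$ witnesses the existential in $\prod_{\mathcal F} S^{(k)}_n$. Conversely, given a witness $a = \prod_{\mathcal F} a_n$ in the ultraproduct, the inductive hypothesis applied to the formula $\psi$ with this extra parameter shows $\psi(a_n,\dots)$ holds for almost all $n$, hence so does $\exists x \in S^{(k)}_n\,\psi(x,\dots)$. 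Care is needed here to make sure the extra parameter fits the inductive framework, but since the theorem is stated with an arbitrary finite list of element and set parameters, this is a formal matter rather than a genuine difficulty.
\end{sproof}
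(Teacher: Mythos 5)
The paper does not actually prove this theorem; it is Łoś's theorem, stated with a citation to Theorem~2.3.2 of \cite{schoutens2010use}, so there is no in-paper proof to compare against. Your argument is the standard textbook proof by induction on the structural complexity of $\phi$, and it is correct: the base case on atomic formulas follows from the componentwise definition of the ultraproduct, the propositional connectives follow from the finite-intersection and dichotomy properties of the ultrafilter (the latter handling negation, and hence disjunction via De~Morgan), and the existential quantifier is handled in both directions, with the left-to-right direction correctly flagged as the place where choice is invoked to assemble a sequence of witnesses. One small point you gloss over: the chosen witnesses $a_n$ exist only for $n$ in the ultrafilter set on which the existential holds, so you must extend the sequence arbitrarily (e.g.\ by an arbitrary element of $S^{(k)}_n$ where $S^{(k)}_n\ne\emptyset$) before forming $\prod_{\mathcal F} a_n$; this is harmless since the ultraproduct only sees the almost-all behaviour, but it is worth stating. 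You also correctly observe that the formulation with a finite list of element and set parameters is exactly what makes the inductive hypothesis applicable to $\psi$ with one more free variable, which is the genuinely load-bearing bookkeeping in this proof.
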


In plain language this means that if we have a sequence  of collections of sets with some algebraic structure given by maps between them, then, first, we have the corresponding maps between the ultraproducts of these sets. And, second, these maps satisfy a given set of axioms or properties for the ultraproducts iff they satisfy these axioms/properties for almost all $n$. Also frequently it is useful to think about  ultraproducts as a some kind of limits as $n \mapsto \infty$.

We  give a number of examples of such constructions, which are going to be useful to us below:

\begin{ex}
 If $S_i$ is a sequence of monoids/groups/rings/fields then $\prod_{\mathcal F} S_i$ with operations given by taking the ultraproduct of the operations in the corresponding sets of $\Hom_{Sets}$ gives us a structure of monoid/group/ring/field by Łoś's theorem.
\end{ex}

\begin{ex}
 If $V_i$ are finite-dimensional vector spaces over a field $k$, then $\prod_{\mathcal F} V_i$ is not necessarily a finite-dimensional vector space, since the property of being finite-dimensional cannot be written in a first-order language. But if the dimensions of $V_i$ are bounded, then they are the same for almost all $i$ and hence $V$ has the same dimension (for example, because the ultraproduct of bases is a basis).
\end{ex}

\begin{ex}
 Take the ultraproduct of a countably infinite number of copies of $\overline{\mathbb Q}$. By \lthm $\prod_{\mathcal F} \overline{\mathbb Q}$ is a field, which is algebraically closed. It has characteristic zero since $\forall k  \in \mathbb Z$ such that $k\ne 0$ it follows that $ k = \prod_{\mathcal F} k\ne 0$.  Also it is easy to see that its cardinality is continuum. Hence by Steinitz's theorem\footnote{This theorem tells us that two uncountable algebraically closed fields are isomorphic iff their characteristic and cardinality are the same. It is proven in \cite{steinitz1910algebraische}.} $\prod_{\mathcal F} \overline{\mathbb Q} \simeq \mathbb C$. Note that there is no canonical isomorphism.
\end{ex}
 
 \begin{ex}
 Take the ultraproduct of $\overline{\mathbb F}_{p_n}$ for some sequence of distinct prime numbers $p_n$. As before, by \lthm $\prod_{\mathcal F} \overline{\mathbb F}_{p_n}$ is a field, which is algebraically closed. Also as before it has cardinality continuum. Now $k = \prod_{\mathcal F} k \ne 0$, since it is equal to zero for at most finite number of  $k$.  Hence $\prod_{\mathcal F} \overline{\mathbb F}_{p_n} \simeq \mathbb C$, again not in a canonical way.
\end{ex}

\begin{ex}
 Suppose $\mathcal C_i$ is a collection of small categories. We can define an ultraproduct category $\widehat{\mathcal C} = \prod_{\mathcal F} \mathcal C_i$ as a category with objects $Ob( \widehat{ \mathcal C}) = \prod_{\mathcal F} Ob(\mathcal C_i)$ and $\Hom_{\widehat{\mathcal C}}(\prod_{\mathcal F} X_i,\prod_{\mathcal F} Y_i) = \prod_{\mathcal F} \Hom_{\mathcal C_i}(X_i,Y_i)$; the composition maps are given by the ultraproducts of composition maps, i.e. $(\prod_{\mathcal F}f_i) \circ (\prod_{\mathcal F}g_i) = \prod_{\mathcal F} (f_i \circ g_i)$. By \lthm this data satisfies the axioms of a category. If the categories $\mathcal C_i$ have some structures, for example the structures of an abelian/monoidal/tensor category, then $\widehat{\mathcal C}$ also has these structures\footnote{But the finite-length property, for example, does not survive.}.
 
 Usually $\widehat{\mathcal C}$ is too big and it is interesting to consider some full subcategories $\mathcal C$ in there, or, equivalently, consider ultraproducts only of some sequences of objects of $\mathcal C_i$, for example bounded in some sense. 
 
 This construction obviously extends to essentially small categories up to an equivalence of such a category with respect to all relevant structures. All categories which we will consider are essentially small, and for all the questions we are going to discuss the fact that ultaproducts are defined up to equivalence does not matter, so we won't bother mentioning this later.
\end{ex}

\subsection{$\Rep(S_t)$ as an ultraproduct.}
Here, we will  show how to construct $\Rep(S_t)$ using ultraproducts, and discuss some important consequences of this construction. See \cite{deligne2007categorie},\cite{harman2016deligne}\footnote{For the similar discussion about $\Rep(GL_t)$ see \cite{deligne2007categorie}, \cite{harman2016deligne}, \cite{kalinov2018finite}.}.

We want to apply the last example of the previous section to $\mathcal C_i = \textbf{Rep}(S_{n_i}, \mathbb K_i)$ -- the tensor category of finite-dimensional representations of $S_{n_i}$ over $\mathbb K_i$. As was stated before (Definition 1.1.1) we will denote by $\textbf{Rep}(S_n) = \textbf{Rep}(S_n,\overline{\mathbb Q})$ and by $\textbf{Rep}_p(S_n) = \textbf{Rep}(S_n, \overline{\mathbb F}_p)$. We have the following result (Introduction of \cite{deligne2007categorie} or Theorem 1.1 in \cite{harman2016deligne}):

\begin{thm}

\textbf{a)} Suppose $t\in \mathbb C$ is transcendental. Consider $\widehat{\mathcal C} = \prod_{\mathcal{F}} \textbf{Rep}(S_n)$. Denote by $X_i = \overline{\mathbb Q}^i$ -- the fundamental representation of $S_i$ and $X_t = \prod_{\mathcal{F}}X_i$. Fix an isomorphism $\prod_{\mathcal F}\overline{\mathbb Q}\simeq \mathbb C$ such that $\prod_{\mathcal F} i = t$. Then the full subcategory of the $\prod_{\mathcal F}\overline{\mathbb Q}$-linear category $\widehat{\mathcal C}$ generated by $X_t$ under taking tensor products, direct sums and direct summands is equivalent to the $\mathbb C$--linear category $\Rep(S_t)$, in a way consistent with the above isomorphism $\prod_{\mathcal F}\overline{\mathbb Q} \simeq \mathbb C$.

\textbf{b)} Suppose $t \in \mathbb C$ is algebraic but not integer, with minimal polynomial $q(x) \in \mathbb Z[x]$. Fix a sequence of distinct primes $p_n$ and sequence of integers $t_n$ tending to infinity such that $q(t_n) = 0$ in $\mathbb F_{p_n}$. Moreover fix an isomorphism $\prod_{\mathcal F}\overline{\mathbb F}_{p_n}\simeq \mathbb C$ such that $\prod_{\mathcal F} t_i = t$. Set $\widehat{\mathcal C} = \prod_{\mathcal{F}} \textbf{Rep}_{p_n}(S_{t_n})$. Denote by $X_{t_i}= \overline{\mathbb F}_{p_i}^{t_i}$  the fundamental  representation of $S_{t_i}$ and set $X_t = \prod_{\mathcal{F}}X_{t_i}$. Then the  full subcategory of the $\prod_{\mathcal F}\overline{\mathbb F}_{p_n}$-linear category $\widehat{\mathcal C}$ generated by $X_t$ under taking  tensor products, direct sums and direct summands is equivalent to the $\mathbb C$-linear category $\Rep(S_t)$,  in a way consistent with the above isomorphism $\prod_{\mathcal F}\overline{\mathbb F}_{p_n}\simeq \mathbb C$.
\end{thm}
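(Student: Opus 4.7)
The plan is to apply Deligne's universal property of $\Rep(S_t)$ (Proposition 1.2.4): $\mathbb C$-linear symmetric tensor functors out of $\Rep(S_t)$ correspond to commutative Frobenius algebras of dimension $t$ in the target. I would construct the required Frobenius algebra in $\widehat{\mathcal C}$ as the ultraproduct of the permutation representations $X_i$ (respectively $X_{t_i}$), thereby obtaining a symmetric tensor functor $F\colon\Rep(S_t)\to\widehat{\mathcal C}$ with $F(\mathcal X)=X_t$. Since $\Rep(S_t)$ is by definition the additive Karoubian envelope of the $[n]$, and the full subcategory of the target generated by $X_t$ under tensor products, sums and summands is by construction the additive Karoubian envelope of the $X_t^{\otimes n}$, it is enough to verify that $F$ is fully faithful between these generating objects.

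For the Frobenius structure, $X_i$ is literally the function algebra on an $i$-element set (resp.\ a $t_i$-element set), with pointwise multiplication, ``sum of coordinates'' counit, and the standard diagonal comultiplication; this makes it a commutative Frobenius algebra of dimension $i$ (resp.\ $t_i$) in $\Rep(S_i)$ (resp.\ $\Rep_{p_i}(S_{t_i})$). All the Frobenius axioms and the dimension computation are first-order, so by Łoś's theorem $X_t$ is a commutative Frobenius algebra in $\widehat{\mathcal C}$ of dimension $\prod_{\mathcal F} i = t$ (resp.\ $\prod_{\mathcal F} t_i = t$), under the fixed isomorphism of fields. The universal property then supplies $F$ together with its action on the generators.

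For full faithfulness, since $\Rep(S_t)$ is semisimple (Proposition 1.2.3(a)), it suffices to show that
$$ \Hom_{\Rep(S_t)}([n],[m]) \;\xrightarrow{F}\; \Hom_{\widehat{\mathcal C}}(X_t^{\otimes n}, X_t^{\otimes m}) $$
is an isomorphism. The left side is $\mathbb C P_{n,m}$ with composition $\phi_t^{n,m,k}$ by construction of $\Rep_0(S_t)$. For the right side, the classical Schur–Weyl-type duality for the partition algebra (Jones, Martin) identifies, for $i\ge n+m$, the Hom space $\Hom_{S_i}(X_i^{\otimes n},X_i^{\otimes m})$ with $P_{n,m}$ at parameter $i$, canonically spanned by the set partitions of $n+m$, with composition $\phi_i^{n,m,k}$. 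Because these dimensions are bounded in $i$, the ultraproduct of these Hom spaces retains the same finite dimension, the partition basis survives, and the composition rule specializes to $\phi_t^{n,m,k}$. So $F$ is a bijection on Hom spaces sending the partition basis to the partition basis, which gives the desired equivalence.

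The only genuinely substantive input, used in part (b), is that this Schur–Weyl identification remains valid in characteristic $p_i$; this holds for almost all $i$ because $p_i\to\infty$ forces $p_i>n+m$, placing us in the semisimple range where the modular theory of $X_{t_i}^{\otimes n}$ coincides with the characteristic-zero one, while $t_i\to\infty$ ensures $t_i\ge n+m$ as well. Everything else reduces to a formal application of the universal property and Łoś's theorem, so this Schur–Weyl step is the main technical obstacle.
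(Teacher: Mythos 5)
Your proposal is correct and follows essentially the same route as the paper's proof: realize $X_t$ as a commutative Frobenius algebra of dimension $t$ by applying \lthm to the ultraproduct of the permutation-module Frobenius algebras, invoke the universal property to get the functor with $F(\mathcal X)=X_t$, and verify full faithfulness by identifying $\Hom_{S_{t_n}}(X_{t_n}^{\otimes r},X_{t_n}^{\otimes s})$ with the partition space $P_{r,s}$ with composition $\phi_{t_n}$ once $t_n$ is large (you cite Jones/Martin where the paper cites Comes--Ostrik; same content), then passing to the ultraproduct. Two minor differences worth noting: the paper also checks the hypotheses are non-vacuous (transcendence of $\prod_{\mathcal F} i$, and a counting argument producing infinitely many pairs $(p_n,t_n)$ with $q(t_n)\equiv 0 \bmod p_n$), and your justification that ``$p_i>n+m$ places us in the semisimple range'' is imprecise --- semisimplicity of $\textbf{\Rep}_{p_i}(S_{t_i})$ requires $p_i>t_i$, which is what the paper uses --- although the partition-diagram description of these Hom spaces is in fact characteristic-free once $t_i\ge r+s$, so your conclusion is unaffected.
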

\begin{proof}
\textbf{a)} First let us prove that it is indeed possible to fix such an isomorphism. The ultraproduct $\prod_{\mathcal F}i$ is an element of $\mathbb C$. Suppose it is algebraic over $\mathbb Q$, then it should satisfy a monic equation $f$ with coefficients in $\mathbb Q$. Then by \lthm for almost all $i$ we have $f(i) = 0$, but since this is true for infinite number of distinct $i$'s, it follows that $f = 0$. Hence by contradiction we conclude that $\prod_{\mathcal F} i$ is transcendental. Now by fixing an automorphism of $\mathbb C$ over $\mathbb Q$ we may send this transcendental number to $t$.

So we have a tensor category $\widehat{\mathcal C}$ linear over $\mathbb C$, with an object $\prod_{\mathcal F}X_i$ of dimension $t$. Since every $X_i$ is a commutative Frobenius algebra, it follows by \lthm that $X_t$ is also a commutative Frobenius algebra. Hence by Proposition 1.2.5 we obtain a monoidal symmetric functor $F:\Rep(S_t) \to \widehat{\mathcal C}$. Since $\Rep(S_t)$ is generated by $\mathcal X$ under taking  tensor products, direct sums and direct summands, it follows that the image of $\Rep(S_t)$ under $F$ is  the full subcategory $\mathcal C$ in $\widehat{\mathcal C}$ generated by $X_t$ under taking  tensor products, direct sums and direct summands. So we know that $F:\Rep(S_t) \to \mathcal C$ is essentially surjective. Now it is enough to prove that it is fully faithful. 

Note that it is enough to prove that 
$$
\prod_{\mathcal F} \Hom_{S_n}(X_n^{\otimes r}, X_n^{\otimes s}) = \Hom_{\Rep(S_t)}([r], [s]) \ ,
$$
and that the composition maps are the same. Indeed both categories can be obtained as the Karoubian envelope of the additive envelope of the categories consisting of all $[s]$ or  $X_t^{\otimes r}$ respectively.

But this follows from Theorem 2.6 in \cite{comes2009blocks}. Indeed there it is stated that there is an isomorphism between $\overline{\mathbb Q}P_{r,s}$ and $\Hom_{S_{n}}(X_n^{\otimes r},X_n^{\otimes s})$ for $n>r+s$. So for almost all $n$ we have $\Hom_{S_n}(X_n^{\otimes r}, X_n^{\otimes s}) = \overline{\mathbb Q}P_{n,m}$. Also Proposition 2.8 in the same article states that under this isomorphism the composition rule on $\Hom_{S_n}(X_n^{\otimes r}, X_n^{\otimes s})$ transforms into the composition rule on $\overline{\mathbb Q}P_{n,m}$ in the definition of $\Rep_0(S_t)$. So it follows that indeed $\prod_{\mathcal F} \Hom_{S_n}(X_n^{\otimes r}, X_n^{\otimes s}) = \Hom_{\Rep(S_t)}([r], [s])$, and the composition rule is the same.

\textbf{b)} First, again, we need to explain how we can fix such an isomorphism.
Let us prove that there is indeed an infinite number of pairs $t_n$ and $p_n$ such that $q(t_n) = 0 \mod p_n$. It is enough to show that there are infinite number of primes dividing the numbers $q(n)$ (if in this case the sequence $t_n$ is bounded, it follows that some $q(t_n)$ is divisible by an infinite number of prime numbers, which is absurd). Suppose it is not so, and there are only $k$ such primes. Fix $C$ such that  we have $q(n) < C \cdot n^{\deg(q)}$ for all positive  integer $n$. Denote by $Q$ the number of integers of the form $q(n)$ for $n \in \mathbb Z_{\ge 0}$ such that $q(n)<N$. By the above inequality this number is at least $\frac{1}{C}\cdot N^{\frac{1}{\deg(q)}}$. On the other hand the number $P$ of  numbers less than $N$ divisible only by $k$ fixed primes  is less or equal to $\log_2(N)^k$, since each prime number is at least $2$. Hence for big enough $N$ we have $P<Q$, which contradicts the hypothesis\footnote{This proof is also written by the first author in his paper, see the proof of Prop. 2.2 in \cite{harman2016deligne}.}. 

So we indeed can choose such unbounded sequences $t_n$ and $p_n$. Now by \lthm it follows that $\prod_{\mathcal F}t_n$ is a root of $q$ in $\mathbb C$, so by composing with an automorphism of $\mathbb C$ we may assume that under an isomorphism $\mathbb C \simeq \prod_{\mathcal F} \overline{\mathbb F}_{p_n}$, $\prod_{\mathcal F}t_n$ maps to $t$. 

The rest of the proof is the same since the representation theory of $S_{n}$ is the same in zero characteristic and in characteristic $p>n$, and $p_n>t_n$ for almost all $n$.
\end{proof}

\begin{rem}
Generally proving something in the case of the algebraic $t$ is harder than in the case of the transcendental $t$. Thus we will for the most time think about the transcendental case as a subcase of the algebraic case using the following formalism. By $\overline{\mathbb F}_0$ we will mean $\overline{\mathbb Q}$, and so the case $t_n = n$, $p_n = 0$ gives us transcendental $t$.

Also we will always assume that the sequences $p_n$ and $t_n$ are the sequences from Theorem 1.4.1b) corresponding to the given $t$.
\end{rem}

Now we can understand how to obtain the objects $\mathcal X(\lambda)$ as ultraproducts.
Indeed $FP_n(t)$ for any $t \ne 0, \dots, 2n$ is a semisimple algebra with the same collection of idempotents given by the specialization of idempotents from the same algebra, but there we treat $t$ as a formal variable\footnote{See discussion in chapter 3.3 in \cite{comes2009blocks}.}.
So from Proposition 3.25 in \cite{comes2009blocks} it follows that the idempotent corresponds to $\mathcal X(\lambda)$ in $FP_n(t)$ is the same one which corresponds to the irreducible representation of $S_k$ with $k>2n$ given by the Young diagram $(k -|\lambda|, \lambda_1, \dots, \lambda_{l(\lambda)})$. 

So using the notation introduced in Definition 1.1.2  the following Corollary holds.
\begin{cor}
The irreducible object $X(\lambda)$ of $\Rep(S_t)$ can be obtained as an ultraproduct of irreducible objects of $\textbf{\Rep}_{p_n}(S_{t_n})$ as $\mathcal X(\lambda) = \prod_{\mathcal F}X_{t_n}(\lambda|_{t_n})$.
\end{cor}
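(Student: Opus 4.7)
The plan is to realize both $\mathcal X(\lambda)$ on the Deligne side and $\prod_{\mathcal F}X_{t_n}(\lambda|_{t_n})$ on the ultraproduct side as images of the \emph{same} primitive idempotent in the partition algebra. Since $\mathcal X(\lambda)$ is a summand of $[|\lambda|]$, it is cut out by some primitive idempotent $e_\lambda(t) \in FP_{|\lambda|}(t) = \End_{\Rep(S_t)}([|\lambda|])$. On the finite side, for almost all $n$ one has $t_n > 2|\lambda|$, so $FP_{|\lambda|}(t_n) \cong \End_{S_{t_n}}(X_{t_n}^{\otimes |\lambda|})$ is semisimple, and inside it the irreducible $X_{t_n}(\lambda|_{t_n})$ is cut out by a primitive idempotent. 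The content of Proposition~3.25 of \cite{comes2009blocks}, as recalled just before the corollary, is that both of these idempotents are obtained by specializing a single universal element of the partition algebra whose coefficients are polynomials in the parameter.

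I would then invoke Theorem~1.4.1(b): under the fully faithful embedding $\Rep(S_t) \hookrightarrow \widehat{\mathcal C}$, the Hom-space $\End_{\Rep(S_t)}([|\lambda|]) = FP_{|\lambda|}(t)$ is identified with $\prod_{\mathcal F}FP_{|\lambda|}(t_n)$, and the scalar $t$ is identified with $\prod_{\mathcal F}t_n$ under the chosen isomorphism $\prod_{\mathcal F}\overline{\mathbb F}_{p_n} \simeq \mathbb C$. Applying \lthm to the polynomial formula for the universal idempotent then gives the equality $e_\lambda(t) = \prod_{\mathcal F}e_\lambda(t_n)$ inside this common Hom-space. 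Taking images on both sides yields precisely $\mathcal X(\lambda) = \prod_{\mathcal F}X_{t_n}(\lambda|_{t_n})$.

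The only delicate point is the specialization step, which requires that $FP_{|\lambda|}(\cdot)$ be semisimple at every relevant value of the parameter so that the universal idempotent really specializes to a primitive idempotent there. This amounts to $t \notin \mathbb Z_{\geq 0}$ on the Deligne side (given by hypothesis) and $t_n > 2|\lambda|$ on the finite side (automatic for almost all $n$, since $t_n \to \infty$), together with the classical fact that the specialization at integer $t_n$ picks out the Young diagram $\lambda|_{t_n}$ rather than some other irreducible. Once these conditions are checked, the remainder of the argument is essentially formal: \lthm handles the ultraproduct identifications coefficient by coefficient, and semisimplicity lets one pass from equality of idempotents to equality of their images.
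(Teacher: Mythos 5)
Your proof is correct and follows essentially the same route as the paper: both arguments hinge on the fact (Proposition 3.25 of Comes--Ostrik, together with the semisimplicity of $FP_{|\lambda|}(\cdot)$ away from the bad set of parameter values) that $\mathcal X(\lambda) \subset [|\lambda|]$ and $X_{t_n}(\lambda|_{t_n}) \subset X_{t_n}^{\otimes |\lambda|}$ are cut out by specializations of a single universal idempotent, and then transport this across the identification $\End_{\Rep(S_t)}([|\lambda|]) \cong \prod_{\mathcal F}\End_{S_{t_n}}(X_{t_n}^{\otimes|\lambda|})$ from Theorem 1.4.1(b). You spell out the Łoś step more explicitly than the paper does (the paper treats it as immediate once the universal-idempotent observation is made), and you should say ``rational in the parameter'' rather than ``polynomial'' for the coefficients of the universal idempotent, but this is cosmetic and does not affect the argument.
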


\begin{rem}
Note that this means that if $V$ as an object of $\Rep(S_t)$ equals to the ultraproduct $V = \prod_{\mathcal F}V_n$, then almost all $V_n$ cannot contain the sign representation. Indeed, since $V = \oplus \mathcal X(\lambda_i)$, we have for almost all $n$, $V_n = \oplus X(\lambda_i|_n)$, hence the height of the Young diagrams appearing in $V_n$ is bounded for almost all $n$. But if $V_n$ would contain the sign representation for almost all $n$ it would mean that each $V_n$ would contain the diagram of height $t_n$ which contradicts  the boundness of heights of the Young diagrams.
\end{rem}

We also need to describe the generalizations of the induction and restriction functors. First let's define the latter using the universal property of $\Rep(S_t)$\footnote{In this paper we use $\boxtimes$ to denote a Deligne tensor product of locally finite abelian categories, for the definition see 1.11 in \cite{etingof2016tensor}.}.
\begin{def0}
 Consider the category $\Rep(S_{t-k}) \boxtimes \textbf{\Rep}(S_k)$ for an integer $k$, and in it the object $\mathcal X \otimes \mathbb C \oplus \mathbb C \otimes X_k$. This object is a commutative Frobenius algebra, and has dimension $t$, so by universal property we have a functor $\Rep(S_t) \to \Rep(S_{t-k}) \boxtimes \textbf{\Rep}(S_k)$. This functor is called the restriction functor and is denoted by $\Res_{S_{t-k} \times S_k}^{S_t}$. 
\end{def0}

Now we want to describe it in terms of ultraproducts.
\begin{prop}
The functor $\Res_{S_{t-k} \times S_k}^{S_t}$ is equal to $\prod_{\mathcal F}\Res_{S_{t_n-k} \times S_k}^{S_{t_n}}$, where the latter functors are regular restriction functors for finite groups.
\end{prop}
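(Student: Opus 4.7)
The plan is to verify the identity via the universal property of $\Rep(S_t)$ (Proposition 1.2.5), using the ultraproduct realization of Theorem 1.4.1 to compute what the candidate ultraproduct functor does to the generator $\mathcal X$.

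First I would set up both the source and target as subcategories of ultraproducts. The source $\Rep(S_t)$ is already identified with the full subcategory of $\widehat{\mathcal C}=\prod_{\mathcal F}\textbf{Rep}_{p_n}(S_{t_n})$ generated by $X_t=\prod_{\mathcal F}X_{t_n}$ under tensor products, direct sums, and summands. For the target, note that if $q(x)$ is the minimal polynomial of $t$ then $q(x+k)\in\mathbb Z[x]$ annihilates $t-k$, and the shifted sequence $t_n-k$ together with the same primes $p_n$ satisfies the hypotheses of Theorem 1.4.1(b) (here we use that $p_n>t_n>k$ for almost all $n$, so the finite group $S_k$ has the same semisimple representation theory over $\overline{\mathbb F}_{p_n}$ as over $\overline{\mathbb Q}$). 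This realizes $\Rep(S_{t-k})\boxtimes\textbf{Rep}(S_k)$ inside $\prod_{\mathcal F}\textbf{Rep}_{p_n}(S_{t_n-k}\times S_k)$ as the full subcategory generated under tensor products, direct sums, and summands by the objects $\prod_{\mathcal F}(X_{t_n-k}\boxtimes\mathbf 1)$ and $\prod_{\mathcal F}(\mathbf 1\boxtimes X_k)$.

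Second, I would observe that the object-wise and morphism-wise application of $\Res_{S_{t_n-k}\times S_k}^{S_{t_n}}$ defines, by \lthm, a symmetric monoidal $\mathbb C$-linear functor $\widehat{F}:\prod_{\mathcal F}\textbf{Rep}_{p_n}(S_{t_n})\to\prod_{\mathcal F}\textbf{Rep}_{p_n}(S_{t_n-k}\times S_k)$. The key computation is elementary at finite level: the permutation representation $X_{t_n}$ restricts to $S_{t_n-k}\times S_k$ as the orbit decomposition
$$\Res_{S_{t_n-k}\times S_k}^{S_{t_n}}(X_{t_n})=(X_{t_n-k}\boxtimes\mathbf 1)\oplus(\mathbf 1\boxtimes X_k).$$
Taking ultraproducts on both sides and using that ultraproducts commute with finite direct sums yields
$$\widehat{F}(X_t)=\mathcal X\boxtimes\mathbf 1\,\oplus\,\mathbf 1\boxtimes X_k$$
as an object of $\Rep(S_{t-k})\boxtimes\textbf{Rep}(S_k)$. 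Hence $\widehat F$ sends the subcategory identified with $\Rep(S_t)$ into the subcategory identified with $\Rep(S_{t-k})\boxtimes\textbf{Rep}(S_k)$, so it restricts to a tensor functor $F:\Rep(S_t)\to\Rep(S_{t-k})\boxtimes\textbf{Rep}(S_k)$.

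Third, I would conclude using the universal property. Both $F$ and $\Res_{S_{t-k}\times S_k}^{S_t}$ are $\mathbb C$-linear symmetric tensor functors from $\Rep(S_t)$ to $\Rep(S_{t-k})\boxtimes\textbf{Rep}(S_k)$, and by construction both send the generating commutative Frobenius algebra $\mathcal X$ to the same commutative Frobenius algebra of dimension $t$, namely $\mathcal X\boxtimes\mathbf 1\oplus\mathbf 1\boxtimes X_k$. By Proposition 1.2.5 such a functor is determined up to unique isomorphism by this image, so $F\simeq\Res_{S_{t-k}\times S_k}^{S_t}$.

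The main obstacle is the bookkeeping in the second step: making rigorous the assertion that $\widehat F$ sends the ultraproduct subcategory equivalent to $\Rep(S_t)$ into the ultraproduct subcategory equivalent to $\Rep(S_{t-k})\boxtimes\textbf{Rep}(S_k)$. Once one has the explicit image of the generator $X_t$ and uses that $\widehat F$ is a tensor functor commuting with direct sums and summands (which follows from \lthm{} applied to the relevant first-order statements about hom-spaces and composition), this reduces to a formal check; everything else is an invocation of the universal property.
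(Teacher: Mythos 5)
Your proof is correct, but it replaces the paper's central computational step with a cleaner generation argument; the difference is worth noting. The paper verifies directly, for every simple object $\mathcal X(\lambda)$, that $\prod_{\mathcal F}\Res^{S_{t_n}}_{S_{t_n-k}\times S_k}(X(\lambda|_{t_n}))$ lands in $\Rep(S_{t-k})\boxtimes\textbf{Rep}(S_k)$, by an explicit Littlewood--Richardson computation: for $t_n$ large the skew shapes decouple from the long first row, so the restriction is a fixed finite sum $\bigoplus c^\lambda_{\nu,\mu'}\,\mathcal X(\nu)\otimes\mathcal X(\mu)$ independent of $n$. Only then does the paper evaluate on $\mathcal X$ and invoke the universal property. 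You instead compute the image of the single generator $X_t$ and observe that, since $\widehat F$ is a $\mathbb C$-linear symmetric tensor functor that preserves direct sums and summands (by Łoś), and $\Rep(S_t)\subset\widehat{\mathcal C}$ is the smallest full subcategory closed under these operations containing $X_t$, the whole subcategory is carried into any subcategory with the same closure properties containing $\widehat F(X_t)$. This eliminates the LR bookkeeping entirely and is the more structural argument. What you pay for is a heavier load on the identification in your first step: you need to know that the full subcategory of $\prod_{\mathcal F}\textbf{Rep}_{p_n}(S_{t_n-k}\times S_k)$ generated under tensor, direct sums, and summands by $\prod_{\mathcal F}(X_{t_n-k}\boxtimes\mathbf 1)$ and $\prod_{\mathcal F}(\mathbf 1\boxtimes X_k)$ is equivalent to $\Rep(S_{t-k})\boxtimes\textbf{Rep}(S_k)$, which is a mild extension of Theorem 1.4.1(b) to the Deligne tensor product (using that $\textbf{Rep}_{p_n}(S_{t_n-k}\times S_k)\simeq\textbf{Rep}_{p_n}(S_{t_n-k})\boxtimes\textbf{Rep}_{p_n}(S_k)$ for $p_n$ large) that you assert but do not fully justify. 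The paper's LR computation is, in effect, a hands-on substitute for that identification, since it exhibits the image of each simple explicitly as a finite sum of external tensor products of simples. Either route is acceptable, but if you want your version to be fully self-contained you should flesh out that identification of the target subcategory; as it stands this is the one place where your argument is thinner than the paper's.
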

\begin{proof}
A priori this ultraproduct functor is a not a functor between $\Rep(S_t)$ and \\  $\Rep(S_{t-k})\boxtimes \textbf{\Rep}(S_k)$, but between the bigger categories. So we need to check that if we restrict it to $\Rep(S_t)$, we will indeed get objects of $\Rep(S_{t-k}) \boxtimes \textbf{\Rep}(S_k)$.

So consider $\mathcal X(\lambda) = \prod_{\mathcal F}X(\lambda|_n)$. Now $$
\left (\prod_{\mathcal F} \Res_{S_{t_n-k} \times S_k}^{S_{t_n}}\right)(\mathcal X(\lambda)) = 
\prod_{\mathcal F} \Res_{S_{t_n-k} \times S_k}^{S_{t_n}}(X(\lambda|_n)) = \prod_{\mathcal F} \bigoplus_{|\mu| = k, |\nu| = t_n-k \ } c_{\nu,\mu}^{\lambda|_n}X(\nu)\otimes X(\mu)  \ ,
$$ 
where the $c$'s are the Richardson-Littlewood coefficients. 

So if $t_n$ is sufficiently big, the gap between the first and the second rows of $\lambda|_{t_n}$ is bigger than $k$. For such $t_n$ the skew shapes $\mu / \nu$ for admissible $\nu$ are all disconnected, there is a part above the first row and the part in the first row. Note that we also can put any sequence of numbers in the part lying in the first row. Hence if we denote by $M(\mu)$ the set of weights $\mu'$ (not necessarily partitions) such that $0 \le \mu'_i\le \mu_i$ it follows that the previous expression equals  $$
 \prod_{\mathcal F} \bigoplus_{|\mu| = k, \mu' \in M(\mu), \nu} c_{\nu,\mu'}^{\lambda}X(\nu|_{t_n})\otimes X(\mu)= \bigoplus_{|\mu| = k, \mu' \in M(\mu), \nu} c_{\nu,\mu'}^{\lambda}\mathcal X(\nu)\otimes \mathcal X(\mu) \ ,
$$
where $c_{\nu,\mu'}^{\lambda} $ is the number of skew-shapes $\lambda / \nu$ of weight $\mu'$. Indeed in this formula we just first summed over the possible choices of fixing the length and the content of the first row (by fixing $\mu'$) and then the rest. So the image of $\mathcal X(\lambda)$ under the ultraproduct functor indeed lies in $\Rep(S_{t-k}) \boxtimes \textbf{\Rep}(S_k)$. 

Now since $\prod_{\mathcal F}\Res_{S_{t_n-k} \times S_k}^{S_{t_n}}(\mathcal X) = \mathcal X \otimes \mathbb C \oplus \mathbb C \otimes X_k$, it follows that the ultraproduct functor sends $\mathcal X$ to $\mathcal X \otimes \mathbb C \oplus \mathbb C \otimes X_k$. So by universality we conclude that the functors are the same.
\end{proof}

\begin{cord}
There is a functor biadjoint to $\Res_{S_{t-k} \times S_k}^{S_t}$, denoted by $\Ind_{S_{t-k} \times S_k}^{S_t}$ and it is equal to $\prod_{\mathcal F}\Ind_{S_{t_n-k} \times S_k}^{S_{t_n}}$.
\end{cord}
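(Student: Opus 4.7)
\begin{sproof}
The plan is to mirror the strategy of Proposition 1.4.5: propose
$$
\widetilde{\Ind}\ :=\ \prod_{\mathcal F}\Ind_{S_{t_n-k}\times S_k}^{S_{t_n}}\ ,
$$
viewed a priori as a functor between the ambient ultraproduct categories, and then verify two separate claims: (i) $\widetilde{\Ind}$ restricts to a functor $\Rep(S_{t-k})\boxtimes\textbf{\Rep}(S_k)\to\Rep(S_t)$, and (ii) the resulting functor is biadjoint to $\Res_{S_{t-k}\times S_k}^{S_t}$.

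For (i) I would evaluate $\widetilde{\Ind}$ on the simples $\mathcal X(\nu)\otimes\mathcal X(\mu) = \prod_{\mathcal F}X(\nu|_{t_n-k})\otimes X(\mu)$. At each finite level the Littlewood-Richardson rule writes $\Ind_n(X(\nu|_{t_n-k})\otimes X(\mu))$ as a direct sum $\bigoplus_{\lambda'}c^{\lambda'}_{\nu|_{t_n-k},\mu}\,X(\lambda')$. For $n$ large the first row of $\nu|_{t_n-k}$ dominates every other row by more than $k$, so every admissible $\lambda'$ must take the form $\lambda|_{t_n}$ for some Young diagram $\lambda$ of size bounded by $|\nu|+k$, and the multiplicities stabilize to $n$-independent integers $d^\lambda_{\nu,\mu}$ depending only on the combinatorics of $\nu$, $\mu$, $\lambda$. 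The ultraproduct therefore collapses to a finite direct sum $\bigoplus_\lambda d^\lambda_{\nu,\mu}\,\mathcal X(\lambda)$, which already lives in $\Rep(S_t)$. This is the same stabilization phenomenon exploited in the proof of Proposition 1.4.5, read in the opposite direction.

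For (ii) I would invoke Łoś's theorem on the finite-level Frobenius reciprocity isomorphisms
$$
\Hom_{S_{t_n}}(\Ind_n Y_n, X_n)\ \cong\ \Hom_{S_{t_n-k}\times S_k}(Y_n,\Res_n X_n)
$$
together with its dual. Each side is a Hom-space of finite-dimensional representations depending functorially on $X_n$ and $Y_n$, and the isomorphism is a natural first-order statement about the relevant sequences; Łoś transports both to isomorphisms of the corresponding ultraproduct Hom-spaces, natural in $X = \prod_{\mathcal F}X_n$ and $Y = \prod_{\mathcal F}Y_n$. Restricting to objects of $\Rep(S_t)$ and $\Rep(S_{t-k})\boxtimes\textbf{\Rep}(S_k)$, and identifying $\prod_{\mathcal F}\Res_n$ with $\Res_{S_{t-k}\times S_k}^{S_t}$ via Proposition 1.4.5, produces both adjunctions simultaneously.

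The only substantive step is (i); step (ii) is essentially formal once the identification from Proposition 1.4.5 is available. Within (i), the main care I expect to need is combinatorial bookkeeping to show that every surviving $\lambda'$ really has the form $\lambda|_{t_n}$ with the LR data of $\lambda$ stabilizing -- the analogue of the $M(\mu)$-argument in the proof of Proposition 1.4.5 -- rather than any conceptually new obstacle.
\end{sproof}
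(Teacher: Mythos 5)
Your proposal matches the paper's own proof: the paper likewise argues that the ultraproduct of the finite-level induction functors lands in $\Rep(S_t)$ by the same Littlewood--Richardson stabilization used for restriction in Proposition 1.4.5, and then transports Frobenius reciprocity (biadjointness) from finite rank via Łoś's theorem. You have simply spelled out the combinatorial step that the paper compresses into ``it can be proven in the same way as above.''
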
 
\begin{proof}
It can be proven in the same way as above that the ultraproduct of induction functors defines a functor into the Deligne category. 

After we know this, by \lthm it follows that this functor is biadjoint to the restriction functor, since it is true in finite rank.
\end{proof}

Note that this allows us to define the restriction and induction functors for any subgroup of $S_k$ in the following way:
$$
\Ind_{S_{t-k} \times G}^{S_t} = \Ind_{S_{t-k} \times S_k}^{S_t} \circ \Ind_{S_{t-k}\times G}^{S_{t-k}\times S_k} \ ,
$$
where the later functor is defined to be $\Ind_{S_{t-k}\times G}^{S_{t-k}\times S_k} = \left ( \Id \boxtimes Ind_{G}^{S_k}\right)$.

The same thing holds for restrictions.

\section{Technical results on representations of $S_N$.}

In this section we prove some technical lemmas which we will use extensively in our proofs of classification. The reader can skip this section at first, and then go back when the need arises. 

\subsection{Facts about subgroups in $S_N$ of small index.}

In this subsection we will prove that under some restrictions on the index of a subgroup of $S_N$ it is conjugate to either $\mathcal{A}_n \times H$ or $S_n \times H$, where $H$ is a subgroup of $S_{N-n}$.

So suppose $N > 10$, $r$ an integer less then $N/2$, and $G$ a subgroup of $S_N$ of index less then ${ N \choose r}$. First following Theorem 5.2 in \cite{dixon1996permutation} we have the following proposition:
\begin{prop}
Under the above assumptions up to conjugation $G$ contains the group $\mathcal{A}_{N-j}$ with $j<r$, where $\mathcal{A}_{N-j}$ is the group of even permutations of the first $N-j$ elements.
\end{prop}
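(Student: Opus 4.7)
The plan is to read this off of Theorem 5.2 of \cite{dixon1996permutation}, a modern form of the classical Bochert--Jordan theorem on low-index subgroups of $S_N$. That theorem says: any subgroup of $S_N$ of index less than $\binom{N}{r}$ (with $r \le N/2$) must, up to conjugation, fix a subset $A \subset \{1,\ldots,N\}$ of size $j < r$ setwise and contain the alternating group on its complement --- apart from a finite list of small-degree sporadic exceptions (essentially $M_{10}$, $M_{11}$, $\mathrm{PGL}_2(q)$ in very small degree) which the hypothesis $N > 10$ rules out.

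If I had to sketch the argument from scratch, I would proceed in two stages. The first stage extracts the invariant set: since $S_N$ is transitive on $r$-subsets of $\{1,\ldots,N\}$ with orbit size $\binom{N}{r} > [S_N : G]$, the subgroup $G$ cannot be transitive on $r$-subsets; iterating the same orbit-counting idea through subset sizes $0, 1, \ldots, r$ produces a $G$-invariant subset $A$ of size $j < r$, and after conjugation we may take $A = \{1,\ldots,j\}$. Then $G \subseteq S_j \times S_{N-j}$, and $[S_j \times S_{N-j} : G] = [S_N : G]/\binom{N}{j}$, which remains well below the Bochert threshold on $N-j$ letters.

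The second stage projects $G$ onto $\overline{G} \le S_{N-j}$ and applies the same Bochert-type bound to conclude $\overline{G} \supseteq \mathcal{A}_{N-j}$. To lift this to an honest copy of $\mathcal{A}_{N-j}$ inside $G$ that fixes $\{1,\ldots,j\}$ pointwise, consider the normal subgroup $K := G \cap (\{e\} \times S_{N-j})$, viewed as a subgroup of $S_{N-j}$. Then $K \cap \mathcal{A}_{N-j}$ is normal in the simple group $\mathcal{A}_{N-j}$ (simple because $N-j > N/2 > 5$, using $N > 10$), so it is either trivial or everything. Triviality would embed $\mathcal{A}_{N-j}$ into $\overline{G}/K$, which is itself a quotient of the first-factor projection $\pi_1(G) \le S_j$, contradicting $|\mathcal{A}_{N-j}| = (N-j)!/2 > j!$. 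Hence $\mathcal{A}_{N-j} \subseteq K \subseteq G$, as required.

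The main obstacle, beyond the index bookkeeping above, is the classification-theoretic content of Theorem 5.2: ruling out the finitely many non-alternating primitive subgroups of $S_n$ (for small $n$) whose order would be large enough to satisfy the index hypothesis. The bound $N > 10$ is exactly what pushes those exceptions out of range, and once they are eliminated the proposition reduces to the classical Bochert inequality bounding the index of a primitive subgroup that does not contain $\mathcal{A}_n$.
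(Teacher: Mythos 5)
Your proposal is correct and takes essentially the same route as the paper: the paper offers no independent argument for this proposition, but simply quotes Theorem 5.2 of Dixon--Mortimer \cite{dixon1996permutation}, with $N>10$ serving to exclude the small-degree exceptional cases, exactly as you do. One caution about your optional from-scratch sketch (which the paper does not attempt either): the inference that $G$ ``cannot be transitive on $r$-subsets'' because $\binom{N}{r}>[S_N:G]$ is a non sequitur, and you gloss over the imprimitive case of the cited theorem --- a subgroup preserving two blocks of size $N/2$ has index $\tfrac12\binom{N}{N/2}$, which can be smaller than $\binom{N}{r}$ for $r$ close to $N/2$, so that case must be ruled out by the index bookkeeping (as it is in the paper's actual application, where $r$ stays bounded while $N\to\infty$) rather than ignored.
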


Now we have the second result:
\begin{prop}
Suppose $G$ is a subgroup of $S_{N}$ which contains $\mathcal{A}_{N-j}$  and $N > 2j+7$, then $G$ is conjugate to either $S_{N-j'} \times H$ or $\mathcal{A}_{N-j'} \times H$ for some $H \subset S_{j'}$ and $j' \le j$.
\end{prop}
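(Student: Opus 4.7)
The plan is to replace $\mathcal{A}_{N-j}$ by a \emph{maximal} alternating-on-a-subset sitting inside $G$, show this enlarged subgroup is normal in $G$, and then classify subgroups of its normalizer. The key tool is the standard fact that if $A, B \subset \{1, \ldots, N\}$ satisfy $|A \cap B| \ge 3$, then $\mathcal{A}_A$ and $\mathcal{A}_B$ together generate $\mathcal{A}_{A \cup B}$.

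First, let $A^* \subset \{1, \ldots, N\}$ be a set of maximal cardinality with $\mathcal{A}_{A^*} \subset G$, where $\mathcal{A}_{A^*}$ denotes the alternating group acting on $A^*$ and fixing its complement pointwise. Since $\{1, \ldots, N-j\}$ is such a subset by hypothesis, $|A^*| \ge N - j$, so $j' := N - |A^*| \le j$. After conjugating by an element of $S_N$ we may assume $A^* = \{1, \ldots, N-j'\}$.

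Next I would show $\mathcal{A}_{A^*}$ is normal in $G$. For any $g \in G$ we have $g \mathcal{A}_{A^*} g^{-1} = \mathcal{A}_{g(A^*)}$, which is again a subgroup of $G$. If $g(A^*) \ne A^*$, the hypothesis gives $|A^* \cap g(A^*)| \ge 2|A^*| - N = N - 2j' > 7 \ge 3$, so by the generation fact $\mathcal{A}_{A^* \cup g(A^*)} \subset G$, a strictly larger alternating subgroup, contradicting the maximality of $|A^*|$. Hence $g(A^*) = A^*$ for every $g \in G$, so $G \subset \mathrm{Stab}_{S_N}(A^*) = S_{N-j'} \times S_{j'}$ (this stabilizer coincides with $N_{S_N}(\mathcal{A}_{A^*})$ because $N-j' \ge 5$).

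It remains to pin down the shape of $G \subset S_{N-j'} \times S_{j'}$. Let $p_1, p_2$ be the two projections, and set $H := p_2(G) \subset S_{j'}$ and $K := G \cap (S_{N-j'} \times \{1\})$. Since $\mathcal{A}_{N-j'} \subseteq K \subseteq S_{N-j'}$ and $\mathcal{A}_{N-j'}$ has index $2$, both $K$ and $p_1(G)$ lie in $\{\mathcal{A}_{N-j'}, S_{N-j'}\}$. When $K = p_1(G)$, Goursat's lemma gives $G = K \times H$, which is one of the two stated forms. The subtle case, and the main obstacle I anticipate, is the ``twisted'' one where $K = \mathcal{A}_{N-j'}$ but $p_1(G) = S_{N-j'}$: here $G$ is a fiber product along the two sign maps $S_{N-j'} \to \mathbb{Z}/2$ and $H \to \mathbb{Z}/2$, which a priori is not literally a direct product. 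Resolving this case — either by ruling it out using the maximality of $A^*$, or by showing that after changing the splitting of $\{1, \ldots, N\}$ (e.g.\ absorbing the sign twist into a redefinition of $H$) the resulting $G$ falls into the stated list — is where I expect the bulk of the careful bookkeeping to sit.
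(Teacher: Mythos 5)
Your first two steps are correct and take a genuinely different route from the paper: you enlarge $\mathcal{A}_{N-j}$ to an alternating group $\mathcal{A}_{A^*}$ with maximal support, use the generation lemma (which indeed holds once $|A\cap B|\ge 2$, so your margin $|A^*\cap g(A^*)|\ge N-2j'\ge N-2j>7$ is more than enough) to force $g(A^*)=A^*$ for all $g\in G$, and then run Goursat inside $S_{N-j'}\times S_{j'}$. The paper instead studies the $G$-orbit of the point $1$ and shows by explicit manipulation of $3$-cycles that $G$ contains $\mathcal{A}_{N-j'}$ where $N-j'$ is the orbit length; both arguments arrive at the same containment $\mathcal{A}_{N-j'}\subseteq G\subseteq S_{N-j'}\times S_{j'}$, and your version is arguably cleaner. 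The untreated direct-product cases of your Goursat analysis ($K=p_1(G)$ gives $G=K\times H$) are also fine.

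However, the ``twisted'' case you postpone is a genuine gap, and neither of your suggested fixes can close it, because such subgroups actually occur and are not conjugate to either listed form: take $G=(S_{N-2}\times S_2)\cap\mathcal{A}_N=\{(\sigma,h):\mathrm{sgn}(\sigma)=\mathrm{sgn}(h)\}$. It contains $\mathcal{A}_{N-2}$ and has order $(N-2)!$; the only groups of that order on the list are $S_{N-2}\times\{1\}$ and $\mathcal{A}_{N-2}\times S_2$, and $G$ is conjugate to neither ($G$ has no fixed points on $\{1,\dots,N\}$ while $S_{N-2}\times\{1\}$ fixes two, and $G\cong S_{N-2}$ has trivial center while $\mathcal{A}_{N-2}\times S_2$ does not). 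So the correct conclusion must include a third family, the index-two fiber products $\{(\sigma,h)\in S_{N-j'}\times H:\mathrm{sgn}(\sigma)=\chi(h)\}$ for surjective characters $\chi:H\to\mathbb{Z}/2$; your hope of ruling them out by maximality of $A^*$, or of absorbing the twist by re-splitting $\{1,\dots,N\}$, cannot work. You should know that the paper's own proof has exactly the same lacuna: after reaching $\mathcal{A}_{N-j'}\subseteq G'\subseteq S_{N-j'}\times S_{j'}$ it simply asserts that only the two product cases remain. The omission is harmless for the paper's application (Lemma 2.2.1): a twisted $G_n$ contains $\mathcal{A}_{t_n-j}\times\ker\chi$ with index two, so $\Ind_{G_n}^{S_{t_n}}(W_n)$ is a direct summand of a module induced from $\mathcal{A}_{t_n-j}\times\ker\chi$, whose restriction to $S_{t_n-j}$ is sign-invariant; the same long-column argument that eliminates $\mathcal{A}_{t_n-j}\times H$ then eliminates these groups as well. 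So your skeleton is sound, but the case you flagged is not bookkeeping --- as stated, it is a counterexample, and the proposition (and its use) must be patched as above.
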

\begin{proof}
 Let's consider the standard action of $S_{N}$ on $N$ elements. Consider the orbit of the first element under the action of $G$. By assumption it contains the first $N-j$ elements. Up to taking a group conjugate to $G$ (we conjugate by an element fixing the first $N-j$ elements) we may assume that the orbit of the first element under $G$ is equal to the first $N-j'$ elements for $j' \le j$. We want to prove that $G$ contains $\mathcal{A}_{N-j'}$. To do this, it is enough to prove that any $3$-cycle consisting of the first $N-j'$ elements belongs to $G$. 

 Let's denote by $B$ the set of first $N-j$ elements and by $C$ the set of $j-j'$ elements directly after $B$. So we need to consider $3$-cycles of four types. 
 
 The first case of a $3$-cycle consisting solely of elements of $B$ is trivial by assumption. 
 
 The second case is the $3$-cycle permuting elements $x,y,z$ such that $x,y \in B$ and $z \in C$. Also let's denote the first element by $1$. Since $z$ belongs to the orbit of $1$ under $G$, it follows that $\exists g \in G$ such that $g(z)=1$. Now since $|B| = N-j$ is bigger than $j$ by at least 6, it follows by the pigeonhole principle that there exist two elements $a,b \in B$ such that $g(a),g(b) \in B$ and all $a,b,g(a),g(b),x,y$ are distinct. Now consider a double transposition $\tau$ which interchanges $a \leftrightarrow x$ and $b \leftrightarrow y$, it belongs to $\mathcal{A}_{N-j}$ and hence to $G$. Now consider a $3$-cycle $\pi$ permuting $g(a), g(b)$ and $1$. It also belongs to $\mathcal{A}_{N-j}$ and hence $G$. Now $ \tau g^{-1}\pi g\tau \in G$ is a $3$-cycle permuting $x,y,z$. Indeed if $c \in N$ is not equal to $a,b,x,y,z$ then both $\pi$ and $\tau$ act trivially, hence $c$ maps to $c$ under the above map. The elements $a$ and $m$ first map to $x,y$ accordingly, then under $g$ they map to something on which $\pi$ acts trivially, so they are mapped back and then back to $a$ and $b$. Now $(x,y,z)$ first map to $(a,b,z)$ then to $(g(a),g(b),1)$, then to $(g(b),1,g(a))$, then to $(b,z,a)$ and to $(y,z,x)$.
 
 The third case is $x \in B$ and $y,z\in C$. Again suppose $g\in G$ maps $z$ to $1$. Again by the pigeonhole principle there are $a,b,c \in B$ such that $g(a),g(b),g(c) \in B$ and $a,b,c,g(a),g(b),g(c),x$ are distinct. By $\tau$ denote the double transposition interchanging $a \leftrightarrow x$ and $b \leftrightarrow c$, as before $\tau \in G$ by the assumptions. Now by the previous two cases a $3$-cycle $\pi$ which permutes $g(a),1,g(z)$ belongs to $G$. Hence by the same logic as above $\tau g^{-1}\pi g\tau \in G$ is the required $3$-cycle.
 
 The final case is $x,y,z \in C$. As before, fix $g\in G$ mapping $z$ to $1$. By the above cases there is a $3$-cycle $\pi \in G$ permuting $g(y),g(z),1$. Then $g^{-1}\pi g$ is the required cycle. 
 
 Hence $\mathcal{A}_{N-j'} \subset G'$, where $G'$ is a  group conjugate to $G$. Since the orbit of $1$ consists of the first $N-j'$ elements, it follows that $G' \subset S_{N-j'} \times S_{j'}$. By the above discussion we are limited to the two cases: $G' = \mathcal{A}_{N-j'} \times H$ or $G' = S_{N-j'} \times H$, where $H \subset S_{j'}$.
\end{proof}

Now we are ready to state the main theorem of this section:
\begin{thm}
Suppose $G \subset S_N$ has index less than ${N \choose r}$ for  $N > 2r + 8$. Then $G$ is conjugate either to $S_{N-j} \times H$ or $\mathcal{A}_{N-j} \times H$ for some $H \subset S_{j}$ and $j \le r$.
\end{thm}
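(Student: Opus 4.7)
The statement looks like a straightforward amalgamation of the two propositions just established, so my plan is to chain them together and verify the numerical hypotheses propagate correctly.

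First I would check that the assumptions of Proposition 2.1.1 are satisfied. The hypothesis $N > 2r+8$ immediately gives $N > 10$ (in the relevant range $r \ge 1$) and $r < N/2$, so Proposition 2.1.1 applies and yields, after replacing $G$ by a suitable conjugate, some integer $j < r$ such that $\mathcal{A}_{N-j} \subset G$.

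Next, I would feed this $j$ into Proposition 2.1.2. Its numerical assumption $N > 2j+7$ must be checked: since $j < r$, we have $2j + 7 \le 2(r-1) + 7 = 2r + 5 < 2r + 8 < N$, so the hypothesis holds. Proposition 2.1.2 then produces a further conjugate of $G$ (equivalently, a single conjugate of the original $G$) equal to either $S_{N-j'} \times H$ or $\mathcal{A}_{N-j'} \times H$ for some $j' \le j$ and some $H \subset S_{j'}$. Since $j' \le j < r \le r$, the bound $j' \le r$ required by the theorem is satisfied.

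There is no real obstacle here; the work has all been done in the two preceding propositions. The only thing to watch is keeping track of which conjugates are being taken, so I would state explicitly at the outset that the whole argument happens up to replacing $G$ by a conjugate, and note that the composition of two conjugations is a single conjugation, so the final conclusion holds up to one conjugation of the original $G$ as required.
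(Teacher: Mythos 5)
Your proof is correct and follows the same approach as the paper: apply Proposition 2.1.1 to get $\mathcal{A}_{N-j} \subset G$ up to conjugation with $j < r$, then verify $N > 2j+7$ and apply Proposition 2.1.2 to finish. The numerical bookkeeping matches the paper's, and the observation that composing two conjugations is again a single conjugation is implicit there as well.
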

\begin{proof}
Using Proposition 2.1.1 we conclude that the conjugate group $G'$ contains $\mathcal{A}_{N-j'}$ for $j'<r$. Now using the Proposition 2.1.2 we conclude that the conjugate group $G''$ is equal to either $S_{N-j} \times H$ or $\mathcal{A}_{N-j} \times H$ for some $H \subset S_{j}$ and $j \le j' \le r$, since $N > 2r+7 \ge 2j'+7$.
\end{proof}

\subsection{Lemmas on ultraproducts of representations of $S_{t_n}$.}

\begin{lemma}
Suppose $V$ is an object of $\Rep(S_t)$ such that $V = \prod_{\mathcal F}V_n$ and $V_n = \Ind^{S_{t_n}}_{G_n}(W_n)$ for some subgroup $G_n \subset S_{t_n}$. Then it follows that $G_n = S_{t_n-j}\times H$ for some $j \in \mathbb Z_{> 0}$ and $H\subset S_j$, for almost all $n$. Also $W = \prod_{\mathcal F}W_n$ is an object of $\Rep(S_{t-j}) \boxtimes \Rep(H)$, hence $V = \Ind^{S_t}_{S_{t-j}\times H}(W)$.
\end{lemma}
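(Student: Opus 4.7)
The strategy splits into three steps: bound the index $[S_{t_n}:G_n]$ so that the structure theorem 2.1.3 applies; use Remark 1.4.3 to rule out the alternating alternative; and pass to the ultraproduct via Cord 1.4.6.

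Since $V\in\Rep(S_t)$ is a finite sum $\bigoplus_i\mathcal X(\lambda_i)$, Corollary 1.4.2 gives $V_n\cong\bigoplus_i X(\lambda_i|_{t_n})$ for almost all $n$. Each $\dim X(\lambda_i|_{t_n})$ is polynomial in $t_n$ of degree $|\lambda_i|$, so $\dim V_n\le C t_n^{r}$ for fixed constants $C,r$. From $\dim V_n=[S_{t_n}:G_n]\cdot\dim W_n$ we conclude $[S_{t_n}:G_n]<\binom{t_n}{r+1}$ for almost all $n$, so Theorem 2.1.3 applies: up to conjugation in $S_{t_n}$ (which we absorb by simultaneously conjugating $W_n$), $G_n$ equals $S_{t_n-j_n}\times H_n$ or $\mathcal A_{t_n-j_n}\times H_n$ with $j_n\le r$ and $H_n\subset S_{j_n}$. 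Because $j_n$ is bounded and $S_j$ has only finitely many subgroups, \lthm applied to the resulting finite disjunction fixes constants $j\le r$, $H\subset S_j$, and a single ``type'' (either always $S$ or always $\mathcal A$) valid for almost all $n$.

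To exclude the alternating case, recall that Clifford theory for the index-two subgroup $\mathcal A_M\subset S_M$ forces $\Ind_{\mathcal A_M}^{S_M}(Y)$ to be isomorphic to its own sign twist, since $\mathrm{sgn}|_{\mathcal A_M}$ is trivial. Consequently, writing $U_n=\Ind_{\mathcal A_{t_n-j}\times H}^{S_{t_n-j}\times S_j}W_n$, the multiplicity $b_{\nu,\kappa}$ of $X(\nu)\otimes X(\kappa)$ satisfies $b_{\nu,\kappa}=b_{\nu^T,\kappa}$. Now $V_n=\Ind_{S_{t_n-j}\times S_j}^{S_{t_n}}U_n$ contains some irreducible $X(\lambda_i|_{t_n})$ whose first row has length $t_n-|\lambda_i|$, and Littlewood--Richardson forces a nonzero $b_{\nu,\kappa}$ with $\nu_1$ of order $t_n$. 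The sign-symmetry then gives a nonzero $b_{\nu^T,\kappa}$, and inducing up produces an irreducible $X(\mu')\subset V_n$ with $l(\mu')\ge l(\nu^T)=\nu_1$ unbounded. This contradicts Remark 1.4.3, which bounds the heights of all irreducible constituents of $V_n$. Hence $G_n=S_{t_n-j}\times H$ for almost all $n$.

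Finally, Mackey's formula for finite groups realises $W_n$ as a direct summand of $\Res_{S_{t_n-j}\times H}^{S_{t_n}}V_n$ via the identity double coset. Taking the ultraproduct and invoking Proposition 1.4.5 together with Cord 1.4.6, $W=\prod_{\mathcal F}W_n$ embeds as a direct summand of $\Res_{S_{t-j}\times H}^{S_t}V$, which is an object of $\Rep(S_{t-j})\boxtimes\Rep(H)$; semisimplicity of that category places $W$ itself there. The asserted equality $V=\Ind_{S_{t-j}\times H}^{S_t}W$ then follows directly by applying Cord 1.4.6 to $V_n=\Ind_{S_{t_n-j}\times H}^{S_{t_n}}W_n$. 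The main obstacle is the alternating-case exclusion: it requires translating the ``no sign representation'' statement of Remark 1.4.3 into a Littlewood--Richardson obstruction, via sign-stability of $\Ind_{\mathcal A_M}^{S_M}$, to pair the long-first-row irreducibles present in $V_n$ with forbidden long-first-column ones.
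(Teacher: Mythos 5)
Your proposal is correct, and it shares the skeleton of the paper's argument (bound the index via $\dim V_n$, invoke Theorem 2.1.3, observe that the finite list of possible $(j,H,\text{type})$ must stabilize over $\mathcal F$, rule out the alternating case, and show $\prod W_n$ lands in the smaller Deligne category), but you execute the last two steps by genuinely different means, both of which are cleaner than the paper's.

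For ruling out the $\mathcal{A}_{t_n-j}$ case, the paper rewrites $V_n=\Ind_{S_{t_n-j}\times S_j}^{S_{t_n}}U_n$ as you do and observes that $U_n$ is sign--twist invariant as an $S_{t_n-j}$-module, but then concludes via the inequality $\max\{l(\lambda),l(\lambda^*)\}\ge\sqrt{|\lambda|}$ that some constituent of $U_n$ has length at least $\sqrt{t_n-j}$. You instead apply Frobenius reciprocity: since $V_n$ contains $X(\lambda_i|_{t_n})$ with first row of length $t_n-|\lambda_i|$, its restriction to $S_{t_n-j}\times S_j$ forces some $X(\nu)\otimes X(\kappa)\subset U_n$ with $\nu_1\ge t_n-|\lambda_i|-j$, whence $\nu^T$ also appears with $l(\nu^T)=\nu_1$ linearly large rather than square-root large. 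Both give an unbounded column length contradicting Remark 1.4.3, but your version is a bit more direct and gives a stronger quantitative estimate.

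For showing $W=\prod_{\mathcal F}W_n\in\Rep(S_{t-j})\boxtimes\Rep(H)$, the paper argues directly: it writes $W_n=\oplus_k W_n^k\otimes U_k$ and bounds the number of constituents and the non-first-row box count by Littlewood--Richardson monotonicity of induction. Your Mackey argument is a nice conceptual replacement: the identity double coset furnishes canonical $S_{t_n-j}\times H$-equivariant idempotents $e_n$ projecting $\Res V_n$ onto $W_n$; the ultraproduct $e=\prod e_n$ lies in $\End(\Res V)$, which by Proposition 1.4.5 (and the paper's remark extending it to subgroups of $S_j$) is an endomorphism space in $\Rep(S_{t-j})\boxtimes\Rep(H)$, and Karoubianness (or semisimplicity) lets the idempotent split inside that category. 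This completely sidesteps the combinatorics of inducing and is, if anything, more robust. The final identity $V=\Ind_{S_{t-j}\times H}^{S_t}(W)$ then follows from Corollary 1.4.6 exactly as you say.

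In short: your proof is correct; it is structurally the same but replaces the paper's two ad hoc combinatorial estimates with a sharper Frobenius-reciprocity bound and a conceptual Mackey/Karoubi argument.
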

\begin{proof}
Suppose $V$ is equal to the sum of $l(V)$ simple objects of $\Rep(S_t)$ such that each one is a subobject of $[m]$ with $m\le m(V)$. Then for almost all $n$ we have $V_n$ being equal to the sum of $l(V)$ irreducible representations included in $V^m$ for $m \le m(V)$. Hence for almost all $n$ we have $\dim V_n \le l(V) \cdot (t_n)^{m(V)}$. But since $V_n = \Ind^{S_{t_n}}_{G_n}(W_n)$ for $G_n \subset S_{t_n}$, we know that $\dim V_n = \dim W_n \cdot |S_{t_n}| / |G_n| \ge |S_{t_n}| / |G_n|$. Hence we obtain the following inequality:
$$
l(V)\cdot (t_n)^{m(V)} \ge |S_{t_n}| / |G_n| \ .
$$
So we have a subgroup of $G_n \subset S_{t_n}$ with an index bounded by $l(V) \cdot t_n^{m(V)}$. Since ${t_n \choose m(V)+1}$ is a polynomial of degree $m(V)+1$ with the highest term being equal to $\frac{t_n^{m(V)+1}}{(m(V)+1)!}$ it follows that all but finite number of  $t_n$ we have ${t_n \choose m(V)+1} \ge l(V) \cdot (t_n)^{m(V)}$. Hence for almost all $n$, $G_n$ satisfies the condition of Theorem 2.1.3 with $N=t_n$ and $r= m(V)+1$. Thus for almost all $n$ we have, after a conjugation, $G_n' = S_{t_n-j_n} \times H_n$ or $G_n' = \mathcal{A}_{t_n-j_n} \times H_n$ for $j \le m(V) + 1$ and $H_n \subset S_{j_n}$. 

For conjugate subgroups $G$ and $G'$ the objects $\Ind^{S_N}_{G}(U)$ and $\Ind^{S_{N}}_{G'}(U)$ are isomorphic for the correct choice of the action of $G$ and $G'$ on $U$. Hence we may suppose that $V_n = \Ind_{G'_n}^{S_{t_n}}(W_n)$. But there is a finite number of subgroups $H$ in $S_j$ for $j \le m(V) + 1$, hence there is a finite number of ways to choose $G'_n$ for every $n$. Thus for almost all $n$ we have the same $j_n=j$ and $H_n=H \subset S_j$, and $G_n = S_{t_n-j} \times H $ or $G_n = \mathcal{A}_{t_n-j} \times H $ for almost all $n$. First we need to rule out the possibility $G_n=\mathcal{A}_{t_n-j} \times H$.

So suppose $G_n = \mathcal{A}_{t_n-j} \times H$. Then 
$$
V_n = \Ind_{G_n}^{S_{t_n}}(W_n) = \Ind_{S_{t_n-j}\times S_j}^{S_{t_n}}(U_n) \ ,
$$
where $U_n$ is an $S_{t_n-j}\times S_j$-module with a structure of representation of $S_{t_n-j}$ induced from the structure of representation of  $\mathcal A_{t_n-j}$. But any such representation  is equivalent to itself tensored with the sign representation, hence if a partition $\lambda$ appears in the decomposition, so does its conjugate $\lambda^*$.

However for any partition we have that $l(\lambda) \times l(\lambda^*) \ge |\lambda|$, so in particular $\max \{ l(\lambda), l(\lambda^*)\} \ge \sqrt{|\lambda|}$. Therefore any representation induced from $\mathcal{A}_{t_n-j}$ to $S_{t_n-j}$ contains an irreducible component corresponding to a partition of length at least $\sqrt{t_n - j}$.
So
$$
U_n = \sum_i X(\lambda_i)\otimes X(\mu_i) \ ,
$$
where $|\lambda_i| = t_n-j$, $|\mu_i| = j$ and at least one of $\lambda_i$ is of length at least $\sqrt{t_n-j}$.

So we have:
\begin{equation}
V_n = \bigoplus_{i,\zeta} X(\zeta)^{\oplus c^\zeta_{\lambda_i,\mu_i}} ,
\end{equation}
where $\zeta$ are partitions of $t_n$ and $c$'s are the Littlewood-Richardson coefficients. 

Suppose $\lambda_j$ is of length at least $\sqrt{t_n-j}$. Then there is $\zeta$ such that $c^{\zeta}_{\lambda_j,\mu_j} \ne 0$ and hence $\zeta$ contains $\lambda_i$ and thus $l(\zeta)\ge l(\lambda_j)\ge \sqrt{t_n-j}$. So the lengths of Young diagrams appearing in $V_n$ are unbounded. But this contradicts to $V$ being an object of Deligne category, because all the simple objects appearing in $V$ lie in $\mathcal X^{\otimes m}$ for some bounded $m$, and hence the length of the Young diagrams appearing in $V_n$ should be bounded for almost all $n$.
Hence $G_n = S_{t_n-j}\times H$.

So $V_n = \Ind^{S_{t_n}}_{S_{t_n-j}\times H}(W_n)$. The last thing to check is that $\prod_{\mathcal F}W_n$ is an object of Deligne category. Write $W_n = \oplus W^{k}_n\otimes U_k$, where $U_k$ is all possible irreducible representations of $H$. Now $\prod_{\mathcal F}W_n$ lies in $\Rep(S_{t-j})\boxtimes \Rep(H)$ iff the number of irreducible representations in the sequence $W^{k}_n$ for each $k$ is bounded and the number of boxes in the corresponding Young diagrams in all the rows except the first one is bounded too. But note that when we induce, each representation in $W^{k}_n$ gives us at least one irreducible representation in the resulting object, so if the number of irreducible representation is unbounded here it is also unbounded in $V_n$. Also if the number of boxes in all the rows except the first one is unbounded, then  it follows that the number of boxes in irreducible representations of $V_n$ is also unbounded. Indeed by Littlewood-Richardson we only add boxes to diagrams when applying induction. Hence for $V$ to lie in $\Rep(S_t)$, $W$ also should lie in $\Rep(S_{t-j})\boxtimes \Rep(H)$. So we are done and $V = \Ind^{S_{t}}_{S_{t-j}\times H}(W)$.
\end{proof}

The next lemma concerns the projective representations of $S_n$.
Denote by $\widehat{S}_n$ the double cover of $S_n$. We may regard projective representations of $S_n$ as a linear representations of $\widehat{S}_n$.  We will need the following result (\cite{kleshchev2012small}):

\begin{thm}
Suppose $n>12$ and $p\ne 2$, then any  irreducible projective representation of $S_n$, which is faithful as $\widehat{S}_n$ representation has dimension at least:
$$
\min \left(2^{\lfloor \frac{n-1-\kappa_n}{2}\rfloor}, 2^{\lfloor \frac{n-2-\kappa_{n-1}}{2}\rfloor}(n-2-\kappa_n-2\kappa_{n-1})\right) \ ,
$$
where $\kappa_n$ is $1$ if $p|n$ and $0$ otherwise.

Under the same assumptions, any irreducible projective representation of $\mathcal A_n$, which is faithful as $\widehat{\mathcal A}_n$ representation has dimension at least:
$$
\min \left(2^{\lfloor \frac{n-2-\kappa_n}{2}\rfloor}, 2^{\lfloor \frac{n-3-\kappa_{n-1}}{2}\rfloor}(n-2-\kappa_n-2\kappa_{n-1})\right) \ ,
$$
where $\kappa_n$ is the same.
\end{thm}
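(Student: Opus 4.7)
The plan is to proceed by induction on $n$ and exploit modular branching rules for spin representations of the double cover $\widehat{S}_n$. First I would set up the parametrization: in characteristic $p \ne 2$, the irreducible faithful projective representations of $S_n$ are indexed by restricted $p$-strict partitions of $n$ (via the Brundan–Kleshchev classification of spin representations), and the two ``basic spin'' modules associated to the one-row partition $(n)$ realize the dimension $2^{\lfloor (n-1)/2 \rfloor}$ in the generic case, dropping by exactly one power of $2$ when $p \mid n$. This is the combinatorial origin of the correction $\kappa_n$.

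Second, for an arbitrary irreducible faithful $V = D(\lambda)$, I would analyze $\Res^{\widehat{S}_n}_{\widehat{S}_{n-1}} V$ via the modular spin branching rule: the restriction decomposes into simple constituents $D(\mu)$ indexed by removable ``normal'' nodes of the $p$-strict partition $\lambda$, with multiplicities $1$ or $2$ depending on the residue of the removed node. Applying the induction hypothesis gives a dimension lower bound for each constituent. I would then split into two cases. If the restriction contains at least two inequivalent simple summands, the bound essentially doubles, and after accounting for the shift produced by the $\kappa_n$-correction in the restriction step one recovers the first term $2^{\lfloor (n-1-\kappa_n)/2 \rfloor}$. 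If only one simple constituent appears — the \emph{completely splittable} case — then $\lambda$ must be of a very restrictive shape, and a direct classification of such $p$-strict partitions, together with the dimensions of their associated modules, produces the linear factor $(n-2-\kappa_n - 2\kappa_{n-1})$ multiplying the shifted basic-spin dimension $2^{\lfloor (n-2-\kappa_{n-1})/2 \rfloor}$.

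Third, the $\mathcal{A}_n$ statement follows by a Clifford-theoretic comparison with the $\widehat{S}_n$ case: any faithful irreducible $\widehat{\mathcal{A}}_n$-module is either the restriction of a faithful irreducible $\widehat{S}_n$-module, or one of two summands of the restriction of a self-associate $\widehat{S}_n$-module; in the latter situation the dimension halves, which uniformly lowers each exponent of $2$ by one and transforms the $S_n$ bound into the claimed $\mathcal{A}_n$ bound. The main obstacle is the combinatorial classification of completely splittable $p$-strict partitions and the precise computation of their $D(\lambda)$-dimensions: getting the residue arithmetic exactly right is what forces the specific appearance of $\kappa_n$ and $\kappa_{n-1}$, and this is the technical heart of Kleshchev's original argument that one cannot shortcut without importing substantial structure theory of spin representations.
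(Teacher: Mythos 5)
The paper does not prove this theorem: it is stated as an imported result and cited directly to Kleshchev and Tiep (the reference \cite{kleshchev2012small}), immediately before Lemma 2.2.3 where it is applied. So there is no internal proof in the paper for your argument to be compared against; what you have written is a high-level reconstruction of the strategy used in that external reference. Your reconstruction is plausible and follows the standard Kleshchev--Tiep playbook: index irreducible spin modules in characteristic $p$ by restricted $p$-strict partitions via Brundan--Kleshchev, anchor the induction on the basic spin module(s) of dimension $2^{\lfloor(n-1)/2\rfloor}$ and its degeneration when $p\mid n$, restrict from $\widehat{S}_n$ to $\widehat{S}_{n-1}$ using the modular spin branching rule, split into the generic multi-constituent case versus the completely splittable case, and pass to $\widehat{\mathcal A}_n$ by Clifford theory. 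You also correctly identify that the real technical content is the combinatorics of completely splittable $p$-strict partitions and the residue bookkeeping that produces the $\kappa_n$, $\kappa_{n-1}$ corrections.

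Two cautions if you wanted to turn this into an actual proof. First, the ``doubling'' step is more delicate than sketched: doubling $2^{\lfloor(n-2-\kappa_{n-1})/2\rfloor}$ does not mechanically yield $2^{\lfloor(n-1-\kappa_n)/2\rfloor}$, since $\kappa_n$ and $\kappa_{n-1}$ can differ and the floors interact with the parity of $n$, so one must track exactly which residues are removed and how many normal nodes appear. Second, the claim that passing from $\widehat{S}_n$ to $\widehat{\mathcal A}_n$ ``uniformly lowers each exponent of $2$ by one'' is false as stated: $\lfloor(n-2)/2\rfloor = \lfloor(n-1)/2\rfloor - 1$ only when $n$ is odd (when $n$ is even the two are equal), so the halving from Clifford theory is not reflected by a uniform exponent shift, and one must separately analyze which $\widehat{S}_n$-modules are self-associate. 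Neither issue invalidates the outline, but both indicate why the paper's authors quite reasonably cite the theorem rather than reprove it.
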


Since the only nontrivial normal subgroups of $\widehat{S}_n$ are $\widehat{\mathcal A}_n$ and the central subgroup,  it follows that any non-linear representation of $S_n$ satisfies the condition of the above theorem. Now we can apply this to obtain the following lemma:

\begin{lemma}
Suppose $W_n$ is a sequence of projective representations of $S_{t_n}$(or $\mathcal A_{t_n}$) for some unbounded sequence $t_n$, such that $\dim W_n \le Mt_n^L$. Then almost all $W_n$ are actually linear representations of $S_{t_n}$ (or $\mathcal A_{t_n}$).
\end{lemma}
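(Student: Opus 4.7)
The plan is to decompose each $W_n$ into irreducible projective summands and then contrast the polynomial upper bound $\dim W_n \le Mt_n^L$ with the exponential lower bound for faithful irreducible projective representations supplied by Theorem 2.2.2.

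First I would observe that for almost all $n$ one has $t_n > 12$, $p_n > t_n$, and $p_n \ne 2$ (the last because the $p_n$ are distinct primes, so at most one equals $2$). In particular $p_n$ does not divide $|\widehat{S}_{t_n}| = 2 \cdot t_n!$, so Maschke's theorem ensures that the group algebra $\overline{\mathbb F}_{p_n}[\widehat{S}_{t_n}]$ is semisimple, and we may decompose $W_n = \bigoplus_i V_n^{(i)}$ into irreducible constituents, each of dimension at most $Mt_n^L$.

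Next, using the fact recorded just before the lemma statement that the only nontrivial normal subgroups of $\widehat{S}_{t_n}$ are the central $\mathbb{Z}/2$ and $\widehat{\mathcal A}_{t_n}$, each of which contains the center, every irreducible $V_n^{(i)}$ either has kernel containing the center -- in which case it descends to a genuine linear representation of $S_{t_n}$ -- or is faithful as a $\widehat{S}_{t_n}$-representation. In the latter case Theorem 2.2.2 forces $\dim V_n^{(i)}$ to grow at least like $2^{\lfloor (t_n-3)/2 \rfloor}$, which is exponential in $t_n$.

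Since exponentials eventually dominate any polynomial, there exists a threshold $T$ beyond which no summand of dimension $\le Mt_n^L$ can be faithful; because the sequences from Section 1.4 satisfy $\prod_{\mathcal F} t_n = t \notin \mathbb Z_{\ge 0}$, the set $\{n : t_n \ge T\}$ lies in $\mathcal F$, so for almost all $n$ every constituent $V_n^{(i)}$ descends to $S_{t_n}$ and $W_n$ itself is linear. The alternating case is parallel: $\mathcal A_{t_n}$ is simple for $t_n \ge 5$, so the only proper nontrivial normal subgroup of $\widehat{\mathcal A}_{t_n}$ is the center, and the second (still exponential) bound in Theorem 2.2.2 closes the argument. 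I do not foresee a substantive obstacle; the whole argument reduces to an exponential-versus-polynomial dimension comparison, with the only mild point being the verification that ``almost all $n$'' captures $t_n$ being arbitrarily large.
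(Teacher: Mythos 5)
Your proposal is correct and takes essentially the same route as the paper: both arguments reduce non-linearity to the existence of an irreducible constituent that is faithful over $\widehat{S}_{t_n}$ (resp.\ $\widehat{\mathcal A}_{t_n}$) and then play the exponential lower bound of Theorem 2.2.2 against the polynomial bound $\dim W_n \le M t_n^L$, using that $t_n$ is large for almost all $n$. Your explicit semisimple decomposition and the kernel dichotomy are simply spelled-out versions of the paper's remark, stated just before the lemma, that any non-linear (projective) representation yields a faithful $\widehat{S}_{t_n}$-representation to which the theorem applies.
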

\begin{proof}
Suppose the action of $S_{t_n}$ on $W_n$ is non-linear for almost all $n$. Then by the above theorem it follows that for almost all $n$ we have $\dim W_n \ge 2^{\lfloor \frac{t_n-4}{2}\rfloor}$ and hence $\dim W_n \ge 2^{t_n-5}$. So we get that for almost all $n$ $Mt_n^l \ge 2^{t_n-5}$, which is a contradiction since this inequality holds only for a finite number of $n$.

The same proof with $\dim W_n\ge 2^{t_n-6}$ instead of $\dim W_n \ge 2^{t_n-5}$ holds for $\mathcal A_{t_n}$.
\end{proof}

To prove the last lemma we will need to use another result, namely Lemma 2.8 from \cite{etingof2014representation}:
\begin{lemma}
For each $C > 0$ and $k \in \mathbb Z_+$ there exists $N(C,k) \in \mathbb Z_+$ such that for each $m>N(C,k)$, if $X(\mu)$ is an irreducible representation of $S_m$ which has dimension $\dim X(\mu)\le Cm^k$, then either the first row or the first column of $\mu$ has length $\ge m-k$.
\end{lemma}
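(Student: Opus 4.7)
The plan is to prove the contrapositive: fix $C > 0$ and $k \in \mathbb{Z}_+$, and show that for $m$ sufficiently large (depending only on $C, k$), any $\mu \vdash m$ with both $\mu_1 \le m - k - 1$ and $l(\mu) \le m - k - 1$ satisfies $\dim X(\mu) > C m^k$. Since $\dim X(\mu) = \dim X(\mu^*)$ and the hypothesis is symmetric under conjugation, I may assume without loss of generality that $\mu_1 \ge l(\mu)$; as $\mu$ fits inside an $l(\mu) \times \mu_1$ rectangle, this forces $\mu_1 \ge \sqrt{m}$.

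The key computational input is the following asymptotic expansion, obtained from a direct manipulation of the Frobenius (or equivalently hook length) dimension formula: for each fixed partition $\tilde\mu$ of a fixed integer $j$,
\begin{equation*}
\dim X((N - j, \tilde\mu)) \;=\; \frac{\dim X(\tilde\mu)}{j!}\, N^{j} \;+\; O(N^{j-1})
\end{equation*}
as $N \to \infty$, with implicit constants depending on $\tilde\mu$. In particular the leading coefficient is strictly positive for every $\tilde\mu$.

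Setting $j := m - \mu_1 \ge k + 1$, I would split into two cases. When $k + 1 \le j \le 2k + 1$, we have $\mu = (m - j, \tilde\mu)$ with $(\tilde\mu, j)$ ranging over a finite set, so the asymptotic directly yields $\dim X(\mu) \ge c(k)\, m^{k+1}$ uniformly, which dominates $C m^k$ for $m$ large. When $j \ge 2k + 2$, I would remove $j - (2k + 1)$ corner boxes from $\tilde\mu$ to obtain a sub-diagram $\tilde\nu \subset \tilde\mu$ with $|\tilde\nu| = 2k + 1$, and form $\nu := (\mu_1, \tilde\nu) \vdash \mu_1 + 2k + 1$. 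Iterated application of the branching rule gives $\dim X(\mu) \ge \dim X(\nu)$, and the asymptotic applied to $\nu$ with $N = \mu_1 + 2k + 1 \ge \sqrt{m}$ yields $\dim X(\mu) \ge c'(k)\, m^{(2k+1)/2}$, again uniformly because $\tilde\nu$ ranges over the finite set of partitions of $2k + 1$; this also dominates $C m^k$ once $m$ is large.

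The main obstacle is uniformity in $\mu$. The Frobenius asymptotic is a statement for \emph{fixed} $\tilde\mu$, and both the leading constant and the error term depend on $\tilde\mu$. The case split above is engineered to reduce, in each case, to a finite family of possible shapes $\tilde\mu$ (resp.\ $\tilde\nu$), over which the implicit constants can be controlled uniformly and the bound becomes effective; the ancillary fact $\mu_1 \ge \sqrt m$ is what makes truncation down to size exactly $2k+1$ (rather than all the way down to $k+1$) still yield a power of $m$ strictly exceeding $k$ in the bulky regime $j \gg k$.
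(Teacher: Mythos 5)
The paper does not give its own proof of this lemma; it is quoted verbatim as Lemma~2.8 of \cite{etingof2014representation} and used as a black box. So there is no in-paper argument to compare against, and I am evaluating your proposal on its own.

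Your argument is correct and self-contained. The asymptotic $\dim X((N-j,\tilde\mu)) = \frac{\dim X(\tilde\mu)}{j!}\,N^{j} + O(N^{j-1})$ for fixed $\tilde\mu\vdash j$ does indeed follow from the hook length formula: the first-row hooks in columns $c>\tilde\mu_1$ contribute $(N-j-\tilde\mu_1)!$, and $N!\big/\bigl((N-j-\tilde\mu_1)!\prod_{c=1}^{\tilde\mu_1}(N-j-c+1+\tilde\mu'_c)\bigr)$ is a ratio of monic polynomials of degrees $j+\tilde\mu_1$ and $\tilde\mu_1$, hence $N^j+O(N^{j-1})$; dividing out the interior hook lengths of $\tilde\mu$ produces the factor $\dim X(\tilde\mu)/j!$. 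The conjugation WLOG and the consequent bound $\mu_1\ge\sqrt m$ are correct. In the range $k+1\le j\le 2k+1$ the pair $(j,\tilde\mu)$ runs over a finite set and the asymptotic gives $\dim X(\mu)\ge c(k)\,m^{k+1}$ once $m$ is large. In the range $j\ge 2k+2$, removing corner boxes of $\tilde\mu$ (which are genuine corners of $\mu$ since they live in rows $\ge 2$, leaving the first row untouched) produces $\nu=(\mu_1,\tilde\nu)$ with $\tilde\nu\vdash 2k+1$ and $\nu\subset\mu$, so iterated branching gives $\dim X(\mu)\ge\dim X(\nu)$; the asymptotic with $N=\mu_1+2k+1\ge\sqrt m$ then gives $\ge c'(k)\,m^{(2k+1)/2}$, and the exponent $k+\tfrac12>k$ is exactly what is needed. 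Your closing remark on uniformity is the right thing to flag, and the reduction to finite families of tail shapes in both cases resolves it. One small point worth making explicit: truncating $\tilde\mu$ down to size $k+1$ rather than $2k+1$ would only yield exponent $(k+1)/2\le k$ and would not suffice, which is why the choice $2k+1$ is not merely convenient but necessary.
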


Now to state our lemma we will need the following definitions:
\begin{def0}
 \textbf{a)} For an object of the symmetric rigid tensor category $W$  define $\mathfrak{gl}(W)$ to be an object $W \otimes W^*$. It has the structure of an associative algebra given by $1 \otimes ev \otimes 1 :\mathfrak{gl}(W)\otimes \mathfrak{gl}(W) \to \mathfrak{gl}(W)$, and thus has the structure of the Lie algebra.
 \textbf{b)} For an object of the symmetric rigid tensor category $W$  define $\mathfrak{sl}(W)$ to be a Lie algebra given by the kernel of the map $ev : \mathfrak{gl}(W) \to \bold 1$. In case of the category $Vect$ this algebra is simple iff the map $\bold 1\to \bold 1$ given by the composition of evaluation and coevaluation maps for $W$ is not zero.\\
  \textbf{c)} For an object of the symmetric rigid tensor category $W$  such that the above map $\bold 1\to \bold 1$ is zero, define $\mathfrak{psl}(W)$ to be the cokernel of the map $coev: \bold 1\to \mathfrak{sl}(W)$. In case of the category $Vect$ this algebra is simple.\\
  \textbf{d)}  For an object of the symmetric rigid tensor category $W$ equipped with a  {(skew-)} symmetric non-degenerate bilinear form (isomorphism $\psi: W \to W^*$), define $\mathfrak{so}(W)$($\mathfrak{sp}(W)$) to be the Lie subalgebra in $\mathfrak{gl}(W)$ given by the kernel of $\sigma \circ \psi \otimes \psi^{-1} + Id$. In the case of the category $Vect$ this algebra is simple.
\end{def0}

\begin{lemma}
Suppose $V$ is an object of $\Rep(S_t)$ given by the ultraproduct of $V_n \in \textbf{\Rep}_{p_n}(S_{t_n})$, almost all isomorphic to 
$\End(W_n)$ (or $\mathfrak{sl}(W_n)$, $\mathfrak{psl}(W_n)$, $\mathfrak{so}(W_n)$, $\mathfrak{sp}(W_n)$ if these objects are defined), for some $W_n \in \textbf{Rep}(S_{t_n})$. Then there exist $W'_n \in \textbf{Rep}(S_{t_n})$ such that $\End(W_n)\simeq \End(W'_n)$(or the corresponding Lie algebras are defined and isomorphic) and  $W=\prod_{\mathcal{F}}W'_n$ is an object of $\Rep(S_t)$. Hence $V = W\otimes W^*, \ \mathfrak{sl}(W), \ \mathfrak{psl}(W), \ \mathfrak{so}(W), \mathfrak{sp}(W)$.
\end{lemma}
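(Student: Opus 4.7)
The plan is to reduce to the case where every irreducible constituent of $W_n$ corresponds to a Young diagram with a long first row, by uniformly twisting $W_n$ by the sign representation when necessary. The modified sequence $W'_n$ will then have a bona fide ultraproduct inside $\Rep(S_t)$ with isomorphic $\End$ (and hence isomorphic $\mathfrak{sl}$, $\mathfrak{psl}$, $\mathfrak{so}$, or $\mathfrak{sp}$).

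First I would establish dimension and shape bounds. Since $V\in\Rep(S_t)$ is a summand of $\mathcal X^{\otimes m}$ for some fixed $m$, we have $\dim V_n \le C t_n^m$ for almost all $n$; because $V_n$ is $\End(W_n)$ (or one of the listed subquotients, differing by at most a constant in dimension), $\dim W_n$ grows at most polynomially in $t_n$. Decompose $W_n = \bigoplus_i m_i^{(n)} X(\mu_i^{(n)})$ into $S_{t_n}$-irreducibles. Each $\dim X(\mu_i^{(n)})$ is polynomially bounded, so by Lemma~2.2.4 for almost all $n$ each $\mu_i^{(n)}$ has either its first row or its first column of length $\ge t_n-k$ for a uniform $k$; call such constituents \emph{row-type} or \emph{column-type}, respectively. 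The multiplicities $m_i^{(n)}$ must also be bounded: if any $m_i^{(n)}\to\infty$, then $\End(W_n)$ would contain the trivial $S_{t_n}$-representation with unbounded multiplicity, contradicting that $V$ is a fixed object of $\Rep(S_t)$. Since row- and column-type diagrams of bounded complexity form a finite list, the ultrafilter lets us assume that all shapes and multiplicities stabilize: the row-type summands are $X(\mu_i|_{t_n})$ with bounded $\mu_i$, the column-type summands are $X((\nu_j|_{t_n})^*)$ with bounded $\nu_j$, each occurring with fixed multiplicity.

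Next I would rule out the mixed case. Suppose $W_n$ contained both a row-type piece $A_n=X(\mu|_{t_n})$ and a column-type piece $B_n=X((\nu|_{t_n})^*)$. Using self-duality and $B_n=X(\nu|_{t_n})\otimes\mathrm{sgn}$, the summand $A_n\otimes B_n$ of $\End(W_n)$ equals $\bigl(X(\mu|_{t_n})\otimes X(\nu|_{t_n})\bigr)\otimes\mathrm{sgn}$. For large $t_n$ the first factor decomposes by Murnaghan's stability of Kronecker coefficients into row-type irreducibles $X(\eta|_{t_n})$ with $\eta$ ranging over a bounded set, with at least one non-zero term. Tensoring with $\mathrm{sgn}$ converts each $X(\eta|_{t_n})$ into $X((\eta|_{t_n})^*)$, whose first column has length $t_n-|\eta|\to\infty$. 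As in the remark after Corollary~1.4.2, the Young diagrams of the constituents of an ultraproduct lying in $\Rep(S_t)$ must have bounded height, so such a column-type constituent cannot appear in $V$; contradiction. Hence $W_n$ is uniformly row-type or uniformly column-type for almost all $n$.

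In the row-type case set $W'_n=W_n$; in the column-type case set $W'_n=W_n\otimes\mathrm{sgn}$, which converts each $X((\mu_i|_{t_n})^*)$ into $X(\mu_i|_{t_n})$. Tensoring by a one-dimensional representation preserves $\End$, so $\End(W'_n)\cong\End(W_n)$ as $S_{t_n}$-module algebras; this isomorphism restricts to isomorphisms of $\mathfrak{sl}$ and $\mathfrak{psl}$ since these are functorial in $\End$. For $\mathfrak{so}$ and $\mathfrak{sp}$, any non-degenerate (skew-)symmetric form $\psi:W_n\otimes W_n\to\mathbb{C}$ transports to a form of the same symmetry type on $W'_n$ via the canonical identification $\mathrm{sgn}\otimes\mathrm{sgn}\cong\mathbb{C}$. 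Hence all the relevant Lie algebras agree for $W_n$ and $W'_n$. Finally, the Young diagrams of $W'_n$ are of the row-type form $\mu_i|_{t_n}$ with bounded $\mu_i$ and bounded multiplicities $m_i$, so by Corollary~1.4.2 the ultraproduct $W=\prod_{\mathcal F}W'_n=\bigoplus_i m_i\,\mathcal X(\mu_i)$ lies in $\Rep(S_t)$, as required. The main obstacle is the mixed-type contradiction above, which hinges on Murnaghan stability and the sign-twist duality between row-type and column-type diagrams; the multiplicity bound and the transport of bilinear forms under the sign twist are easier secondary points.
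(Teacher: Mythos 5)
Your plan coincides with the paper's: use Lemma 2.2.4 to normalize constituents of $W_n$ to row-type or column-type, rule out the mixed case using the Kronecker product with the sign twist, and then pass to the sign-twisted $W'_n$ in the column-type case. The mixed-case contradiction is phrased a bit differently (Murnaghan stability plus sign duality) but is the same observation as the paper's.

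However there is a real gap in how you justify the multiplicity bound and the mixed-case exclusion for the orthogonal and symplectic cases. You argue ``if any $m_i^{(n)}\to\infty$ then $\End(W_n)$ contains the trivial with unbounded multiplicity, contradicting that $V$ is a fixed object,'' and similarly your mixed-case argument produces unbounded-height constituents inside $\End(W_n)$. But when $V_n=\mathfrak{so}(W_n)$ or $\mathfrak{sp}(W_n)$, the object $V_n$ is only a Lie subalgebra of $\End(W_n)$, and neither the extra trivials nor the unbounded-height constituents are automatically inherited by a subobject --- they could a priori be killed. (For $\mathfrak{sl}$ and $\mathfrak{psl}$ this is harmless since you only delete one or two trivials, but for $\mathfrak{so}$ and $\mathfrak{sp}$ you are deleting roughly half of $\End(W_n)$.) The paper closes this gap by decomposing the invariant bilinear form via Schur's lemma (using that all $S_n$-irreducibles are self-dual and orthogonal) to put it in block form, and then observing directly that all the off-diagonal blocks $X(\mu^{(n)}_i)\otimes\mathcal E_i\otimes X(\mu^{(n)}_j)\otimes\mathcal E_j$ with $i\neq j$ do land inside $\mathfrak{so}(W_n)$ (resp.\ $\mathfrak{sp}(W_n)$); this simultaneously bounds the number of summands and supplies the constituents needed for the mixed-case contradiction. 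You would need to add this analysis of the form to make your argument go through for the $\mathfrak{so}$ and $\mathfrak{sp}$ cases.
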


\begin{proof}
Suppose $V = \sum_{i=1}^M \mathcal X(\lambda_i)$, where all $\mathcal X(\lambda_i)$ lie in  $[k]$ for $k \le L$. Then for almost all $n$ ($n>3$)
$$
\dim W_n \le \dim V_n \le M(t_n)^L \\ ,
$$
so by Lemma 2.2.4 there is an $N(M,L)$ such that for any $n>N(M,L)$ any irreducible representation $X(\mu)$ appearing in $W_n$ is such that either the first row or the first column of $\mu$ has length $t_n-L$.

Hence for almost all $n$ we have $W_n = \oplus X(\mu^{(n)}_j)\otimes \mathcal E^{(n)}_j$, with each $\mu^{(n)}_j$ having  first row of length at least $t_n-L$, and $\mathcal E^{(n)}_j$ being either one-dimensional trivial or sign representation. Let's denote by $W'_n$ the representation equal to $W_n' = \oplus X(\mu^{(n)}_j)$. 

We need to check several cases.

$\bullet$  $V_n =\End(W_n)$ \\ Since the number of summands in $V_n = W_n\otimes W_n$ is bounded and is bigger or equal than the number of summands in $W_n$ it follows that the number of summands in the latter representation is bounded. Hence, since there is a finite number of ways to put $L$ boxes into the rows of Young diagram, it follows that for almost all $n$ we have $W_n = \oplus X(\mu_j|_n)\otimes \mathcal E_j$, for some partitions $\mu_j$ of weight at most $L$. Now, if one of $\mathcal E_j$ is the sign representation and another $\mathcal E_i$ is the trivial representation then  the ultraproduct of $X(\mu_j|_n)\otimes X(\mu_i|_n) \otimes \mathcal E_j\otimes \mathcal E_i = (\oplus X(\eta_j|_n))\otimes sgn$ is not the object of $\Rep(S_t)$ since the number of rows is unbounded. But this contradicts $V$ being the object of $\Rep(S_t)$, hence  $\mathcal E_j$ are all trivial or all sign representations. In the latter case taking $W'_n$ instead of $W_n$ does not change $W_n\otimes W_n = W_n'\otimes W_n'$ (since $W'_n\otimes W'_n = W_n\otimes W_n \otimes sgn^{\otimes 2}$). But hence $W = \prod_{\mathcal F}W'_n = \oplus \mathcal X(\mu_j)$ is well defined.

$\bullet$ $\mathfrak{sl}$ or $\mathfrak{psl}$ \\ In first case we subtract one trivial representation of $S_{t_n}$ and in the second case two trivial representations, so it follows that the number of summands in $W_n$ is still bounded. Using the same reasoning as above it also follows that $\mathcal E_j$ are all trivial or are all sign representations since subtracting trivial representations from $W_n\otimes W_n$ cannot delete the representation with a big number of rows. So it follows that we again can take $W'_n$ and get the same $\End(W_n) = \End(W'_n)$ and hence the same Lie algebra. 

$\bullet$  $\mathfrak{so}$ \\ Let's write down $W_n$ as $W_n = \oplus X(\nu^{(n)}_j)\otimes U_j$. There all $\nu^{(n)}_j$ are different for different $j$ and $U_j$ are trivial representations of $S_{t_n}$. Now, an invariant symmetric bilinear form on $W_n$ is given by an isomorphism $\phi:W_n \to W_n^*$ . Since all irreducible representations of $S_n$ are real and self-dual, by Schur's lemma it follows that an isomorphism $\phi:W_n\to W_n^*$ decomposes to the sum of isomorphisms $\phi_j:X(\nu^{(n)}_j)\otimes U_j \to X^*(\nu^{(n)}_j)\otimes U^*_j$ given by tensor product of the isomorphism $X(\nu^{(n)}_j) \to X^*(\nu^{(n)}_j)$ and an isomorphism $\psi_j:U_j \to U_j^*$. So our symmetric invariant bilinear form is the sum of products of symmetric invariant forms on $X(\nu^{(n)}_j)$ and invariant forms on $U_j$. Thus forms on $U_j$ also should be symmetric. But then up to change of basis we can assume that the invariant bilinear form pairs $\oplus X(\mu^{(n)}_j)\otimes \mathcal E^{(n)}_j$ to itself in the decomposition  $W_n = \oplus X(\mu^{(n)}_j)\otimes \mathcal E^{(n)}_j$. Hence $\mathfrak{so}(W_n)$ contains copies of all tensor products of $\oplus X(\mu^{(n)}_j)\otimes \mathcal E^{(n)}_j \otimes \oplus X(\mu^{(n)}_i)\otimes \mathcal E^{(n)}_i$ for $i\ne j$. This means first that the number of summands in $W_n$ is bounded. And that the previous argument can be again used to prove that all $\mathcal E_j$ are either trivial or sign. So we can again take $W_n'$ instead of $W_n$ to obtain the same Lie algebra.

$\bullet$  $\mathfrak{sp}$ \\ Here the discussion in the previous paragraph can be repeated, but the invariant bilinear form on $U_j$ should be skew-symmetric. Hence all $U_j$ are even-dimensional. Again up to the change of basis in $U_j$, we can write down $W_n$ as $\oplus X(\mu^{(n)}_j)\otimes \mathcal E^{(n)}_j \otimes \overline{\mathbb F}_{p_n}^2$ where invariant form is given by sum of products of invariant forms on $\oplus X(\mu^{(n)}_j)\otimes \mathcal E^{(n)}_j$ with standard skew-symmetric form on $\overline{\mathbb F}_{p_n}^2$. Thus again it follows that $\mathfrak{sp}(W_n)$ contains copies of $\oplus X(\mu^{(n)}_j)\otimes \mathcal E^{(n)}_j \otimes \oplus X(\mu^{(n)}_i)\otimes \mathcal E^{(n)}_i$ for $i \ne j$. So all previous arguments can be repeated. 
\end{proof}

\section{Classification of simple associative algebras in $\Rep(S_t)$ and applications.}

\subsection{Classification.}

First we need to understand the classification of simple associative algebras for $\textbf{\Rep}_{p}(S_N)$. There is the following way of constructing such  algebras. Fix $G \subset S_N$ and a simple associative algebra $\Mat_m(\overline{\mathbb F}_p)$ with an action of $G$. From this information we can construct the algebra $\Fun_G(S_N,\Mat_m(\overline{\mathbb F}_p)) \in \textbf{\Rep}_p(S_N)$, which is equal to $\Ind_G^{S_N}(\Mat_m(\overline{\mathbb F}_p))$ as a representation. We have the following theorem (see for example \cite{etingof2017semisimple}, where it is formulated for any group):
\begin{thm}
Fix an algebraically closed field $k$. Any simple associative algebra in $\Rep(S_N,k)$ is isomorphic to \\ $\Fun_G(S_N,\Mat_m(k))$ and all such algebras are simple. Moreover $G$ is defined up to conjugation in $S_N$ and the action of $G$ on $\Mat_m(k)$ up to conjugation in $\Aut(\Mat_m(k))$.
\end{thm}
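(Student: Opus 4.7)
The plan is to reduce to the classical structure theory of semisimple algebras, using the fact that canonical subspaces of an algebra are automatically $S_N$-invariant. Let $A$ be a simple associative algebra in $\textbf{\Rep}(S_N, k)$. The key first observation is that the Jacobson radical $J(A)$ is characteristic, hence preserved by the $S_N$-action by algebra automorphisms. Since $A$ is finite-dimensional and unital, $J(A)$ is nilpotent and therefore a proper ideal; by $S_N$-simplicity it must vanish. Thus $A$ is semisimple as an ordinary $k$-algebra, and since $k$ is algebraically closed one has a canonical Wedderburn decomposition $A \cong \prod_{i=1}^r \Mat_{m_i}(k)$.

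Next I would establish transitivity of the $S_N$-action on the simple factors. The minimal two-sided ideals $I_1,\dots,I_r$ of $A$ are characteristic in the sense that they are permuted by any algebra automorphism, so $S_N$ acts on the set $\{I_1,\dots,I_r\}$. The sum over any single $S_N$-orbit is an $S_N$-invariant two-sided ideal of $A$; by simplicity there can be only one orbit. Since the $S_N$-action restricts to algebra isomorphisms between factors, all $m_i$ coincide with a common value $m$. Set $G := \mathrm{Stab}_{S_N}(I_1) \subset S_N$; then the $G$-action on $I_1 \cong \Mat_m(k)$ is by algebra automorphisms, and the transitive action of $S_N$ on the factors identifies $A$ with $\Ind_G^{S_N}(I_1) = \Fun_G(S_N, \Mat_m(k))$ as an $S_N$-equivariant algebra.

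For the converse direction I would show that any such induced algebra is $S_N$-simple: as an ordinary algebra $\Fun_G(S_N,\Mat_m(k)) \cong \prod_{gG \in S_N/G} \Mat_m(k)$, whose two-sided ideals are products of ideals in each factor. Since each $\Mat_m(k)$ is simple, an ideal is determined by a subset of $S_N/G$, and $S_N$-invariance forces this subset to be either empty or all of $S_N/G$. Uniqueness is then routine: a different choice of minimal ideal $I_j$ in place of $I_1$ gives a conjugate stabilizer, and a different choice of isomorphism $I_1 \cong \Mat_m(k)$ alters the $G$-action by an automorphism of $\Mat_m(k)$ (which by Skolem--Noether is inner). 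This matches exactly the ambiguity asserted in the theorem.

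The argument is largely formal once the radical vanishes and transitivity is in place; the only step requiring care is the passage from the abstract decomposition $A \cong \bigoplus g_i(I_1)$ to the explicit $\Fun_G$-description, which needs a consistent choice of coset representatives. Since this is a well-known result (cf.\ \cite{etingof2017semisimple}, stated there for arbitrary groups), I expect no real obstacle, and the main work lies in verifying compatibility of the algebra and $S_N$-module structures under the identification.
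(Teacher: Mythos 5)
Your proof is correct, and it is worth noting that the paper itself does not prove this statement: it is quoted from \cite{etingof2017semisimple}, where it is established with an arbitrary finite group in place of $S_N$. Your argument is exactly the standard one underlying that reference: the Jacobson radical is preserved by every algebra automorphism, hence is an $S_N$-invariant ideal and must vanish; Artin--Wedderburn over the algebraically closed field $k$ then gives a product of matrix algebras whose minimal two-sided ideals are permuted by $S_N$, transitivity follows from simplicity, and the stabilizer $G$ of one factor yields the identification with $\Fun_G(S_N,\Mat_m(k))$; simplicity of any such function algebra and the uniqueness statement follow as you indicate. Two small points to make explicit. First, you should state the standing convention that the algebra object is unital (or at least that $A^2\neq 0$): without it a finite-dimensional algebra with zero multiplication and irreducible underlying $S_N$-module would have no nontrivial invariant ideals yet fail the classification, and this convention is what guarantees $J(A)\neq A$. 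Second, a virtue of your route worth emphasizing is that it never uses semisimplicity of the category $\Rep(S_N,k)$, only the structure theory of the underlying finite-dimensional $k$-algebra, so it is valid in every characteristic --- which is precisely what the paper needs, since it applies the theorem over $\overline{\mathbb F}_{p_n}$. The passage from the orbit decomposition $A=\bigoplus_i I_i$ to the explicit $\Fun_G$-model that you flag as the delicate step can be done without choosing coset representatives, by sending $a\in A$ to the function $x\mapsto \pi_1(x\cdot a)$, where $\pi_1$ is the projection onto $I_1$; equivariance and multiplicativity are then immediate checks, and injectivity follows because the kernel is an invariant ideal.
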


Now by \lthm simple associative algebras in $\Rep(S_t)$ are given by ultraproducts of simple associative algebras in $\textbf{\Rep}_{p_n}(S_{t_n}) =\mathcal C_n$ such that their ultraproduct as  objects of $\mathcal C_n$ lies in $\Rep(S_t)$.

So suppose $A \in \Rep(S_t)$ is a simple associative algebra in $\Rep(S_t)$, which is equal to the ultraproduct of $\Ind^{S_{t_n}}_{G_n}(B_n)$, where $B_n$ are matrix algebras. Then we can apply Lemma 2.2.1 to conclude that for almost all $n$ we have $G_n = S_{t_n-j}\times H$, $B = \prod_{\mathcal F}B_n$ is an object of $\Rep(S_{t-j})\boxtimes \Rep(H)$ and $A = \Ind_{S_{t-j}\times H}^{S_t}(B)$.

For the next step, we need to understand what sequences of $B_n$ are admissible and what we can obtain as a result of taking their ultraproduct. We know that $B_n=\Mat_{m_n}(\overline{\mathbb F}_{p_n})$ with a structure of representation of $S_{t_n-j}\times H$. Let's slightly change the notation and denote $B_n = \End(V_n)$, where $V_n$ are some finite-dimensional spaces over $\overline{\mathbb F}_{p_n}$. Since $S_{t_n-j} \times H$ acts by algebra automorphisms on $B_n$, we have a homomorphism $S_{t_n-j} \times H \to \Aut(B_n) = PGL(V_n)$. So we have a structure of projective representation of $S_{t_n-j}\times H$ on $V_n$. But note that $\dim V_n \le \dim B_n$ which is bounded by some $M(t_n-j)^L$ since $B =\prod_{\mathcal F}B_n$ is an object of Deligne category. So  Lemma 2.2.3 can be applied, and hence we conclude that the structure of representation of $S_{t_n-j}$ is linear and not projective.

Thus each $V_n$ is a representation of $S_{t_n-j}$ together with a projective action of $H$. Now we want to prove that $\prod_{\mathcal F}V_n$ as a representation of $S_{t_n-j}$ is a well-defined object of $\Rep(S_{t-j})$. But this follows from Lemma 2.2.6. So indeed we have $V = \prod_{\mathcal F}V_n$ an object of $\Rep(S_{t-j})$.

Now we are ready to state the classification theorem:
\begin{thm}\label{assoc}
Suppose $A$ is a simple associative algebra in $\Rep(S_t)$, then it is isomorphic to $Ind^{S_t}_{S_{t-j}\times H}(B)$, where $j \in \mathbb Z_+$, $H \subset S_j$ and $B$ equals to $V \otimes V^*$, where $V$ is an object of $\Rep(S_{t-j})$, together with an action of $H$ on $V \otimes V^*$ by algebra automorphisms. Any algebra obtained in this way is a simple associative algebra in $\Rep(S_t)$.

Moreover, $H$ is defined uniquely up to conjugation in $S_j$, and the structure of $H$-representation on $V\otimes V^*$ is defined uniquely up to conjugation inside $\Aut_{Ass-alg}(V\otimes V^*)$.
\end{thm}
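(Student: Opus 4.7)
The existence of the decomposition has essentially been established in the discussion immediately preceding the theorem: \lthm combined with Theorem 3.1.1 realises any simple $A$ as an ultraproduct $\prod_{\mathcal F}\Ind^{S_{t_n}}_{G_n}(\End V_n)$, Lemma 2.2.1 reduces $G_n$ to $S_{t_n-j}\times H$ with $V=\prod_{\mathcal F}V_n\in\Rep(S_{t-j})$, and Lemma 2.2.3 guarantees the $S_{t_n-j}$-action on $V_n$ is linear. The only thing left to plug in on the forward side is that the $H$-action on each $\End V_n$ by algebra automorphisms assembles into an $H$-action on $V\otimes V^*=\prod_{\mathcal F}\End V_n$ by algebra automorphisms; this is automatic by \lthm applied to the first-order statement ``a given element of the finite group $H$ acts by an algebra automorphism''.

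For the converse, I would start with $(j,H,V,\phi)$ as in the statement and write $V=\prod_{\mathcal F}V_n$. The $H$-action $\phi$ is the ultraproduct of finite-level $H$-actions $\phi_n:H\to\Aut_{Ass-alg}(\End V_n)$, the existence of such presentations being guaranteed again by \lthm since $H$ is a fixed finite group and the constraints defining a homomorphism into the automorphism group are first-order. For every $n$ the algebra $\End V_n$ is a simple associative algebra equipped with an $S_{t_n-j}\times H$-action by algebra automorphisms, hence by Theorem 3.1.1 the induced algebra $\Ind^{S_{t_n}}_{S_{t_n-j}\times H}(\End V_n)$ is simple in $\textbf{\Rep}_{p_n}(S_{t_n})$. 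Simplicity of $A$ as an object of $\Rep(S_t)$ is controlled by the finite lattice of subobjects (recall $A$ has finite length), so that ``$A$ has no non-trivial two-sided ideal'' unpacks to a finite conjunction of first-order statements, and \lthm transfers simplicity to the ultraproduct $A=\Ind^{S_t}_{S_{t-j}\times H}(V\otimes V^*)$.

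For uniqueness, suppose two tuples $(j,H,V,\phi)$ and $(j',H',V',\phi')$ produce isomorphic simple algebras in $\Rep(S_t)$. By \lthm, for almost all $n$ the finite-level algebras $\Ind^{S_{t_n}}_{S_{t_n-j}\times H}(\End V_n)$ and $\Ind^{S_{t_n}}_{S_{t_n-j'}\times H'}(\End V'_n)$ are isomorphic in $\textbf{\Rep}_{p_n}(S_{t_n})$. The uniqueness clause of Theorem 3.1.1 supplies, for each such $n$, an element $g_n\in S_{t_n}$ conjugating $S_{t_n-j}\times H$ to $S_{t_n-j'}\times H'$ and an automorphism of $\End V_n$ intertwining the two $H$-actions. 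Because both ambient subgroups are of the rigid shape produced by Theorem 2.1.3, the conjugator $g_n$ can in fact be taken in the stabiliser of the first $t_n-j$ letters for almost all $n$; this forces $j=j'$ and identifies $H$ with $H'$ up to conjugation in $S_j$. Transporting through the ultraproduct then identifies $\phi$ with $\phi'$ up to conjugation inside $\Aut_{Ass-alg}(V\otimes V^*)$.

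The main obstacle I expect in the above sketch lies in the uniqueness step: Theorem 3.1.1 gives conjugations only pointwise in $n$, and one needs them to be coherent enough across $n$ that their ultraproduct produces a single honest conjugation at the level of $\Rep(S_t)$. This ultimately reduces to a pigeonhole argument, since the data parametrising the ambiguity (subgroups $H\subset S_j$ with $j$ bounded, together with their actions modulo conjugation) lies in a finite set and therefore stabilises along $\mathcal F$. A secondary technical point is making precise that ``simplicity'' and ``being an algebra automorphism'' are first-order expressible in the ultraproduct language; this is exactly why finiteness of the subobject lattice of $A$ in $\Rep(S_t)$ must be invoked.
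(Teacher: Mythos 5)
Your proposal is correct and follows essentially the same route as the paper: Łoś's theorem combined with the classification of simple $S_N$-algebras (Theorem 3.1.1), Lemmas 2.2.1, 2.2.3 and 2.2.6 for the existence direction, and the uniqueness clause of Theorem 3.1.1 together with a pigeonhole argument on the (finitely many, boundedly parametrised) possibilities for $(j,H,\phi)$ in the uniqueness direction. The one minor variation is that you assemble the $H$-action on $V\otimes V^*$ directly by applying Łoś's theorem to the first-order statements defining a homomorphism $H\to\Aut_{Ass\text{-}alg}$, whereas the paper instead observes that there are only finitely many maps from $H$ into $\bigl(\bigoplus GL(\mathbb C^{k_j})\bigr)/\{c\cdot\Id\}$ up to conjugacy and applies pigeonhole; both work and give the same conclusion.
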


\begin{proof}
From the above discussion it follows that $A = Ind^{S_t}_{S_{t-j}\times H}(B)$, for $B = \prod_{\mathcal F}B_n$ and  that $B_n = V_n \otimes V_n^*$ for $V_n$ with a projective action of $H$ and linear action of $S_{t_n-j}$ which commute with each other. Moreover $\prod_{\mathcal{F}}V_n=V$ is a well-defined object of $\Rep(S_{t-j})$. 

Suppose $V = \bigoplus \mathcal X(\lambda_j)\otimes \mathbb C^{k_j}$ with $\lambda_i \ne \lambda_j$ for $i\ne j$, then projective action of $H$ on each $V_n$ is given by the map $\rho_n:H \to \left (\bigoplus GL(\mathbb C^{k_j})\right)/(\{ c\cdot Id\})$. But there is a finite number of such  maps up to conjugacy, hence for almost all $n$ they are the same and we get the projective action of $H$ on $V$ itself. Which is the same as the action of $H$ on $V\otimes V^*$ by algebra automorphisms.

Also any such algebra is simple by \lthm.

Now we need only to check the uniqueness statement. Suppose we have two algebras $\Ind^{S_t}_{S_{t-j}\times H}(V\otimes V^*)$ and $\Ind^{S_t}_{S_{t-j'}\times H'}(W\otimes W^*)$. These algebras are isomorphic iff almost all algebras in the corresponding ultraproducts are isomorphic. But by Theorem 3.0.1 it follows that this is only possible iff $S_{t_n-j}\times H$ and $S_{t_n-j'} \times H'$ are conjugate in $S_{t_n}$ for almost all $n$, $\End(V_n) = \End(W_n)$ and the actions of $S_{t_n-j}\times H$ and $S_{t_n-j'} \times H'$ are conjugate in $\Aut(\End(V_n))$. So it follows that $j=j'$ (for $t_n > 2\max(j,j')$) for almost all $n$ and hence $H$ is conjugate to $H'$ inside $S_j$ (since the conjugation should leave $S_{t_n-j}$ invariant). Also it follows that $V_n$ and $W_n$ must have the same dimension, and since the action of $S_{t_n}$ on them is the same up to conjugation, we can assume that $V_n = W_n$ and they lead to the same object of $\Rep(S_t)$. Hence the last requirement is that the actions of $H$ and $H'$ on $\End(V_n)$ are conjugate. Hence by \lthm the statement of our Theorem follows.
 
\end{proof}

\begin{rem}
This gives us a classification of simple commutative algebras in $\Rep(S_t)$ given in \cite{sciarappa2015simple}, \cite{harman2016deligne} as a special case, where we restrict ourselves to $B$ being $1$-dimensional.
\end{rem}

\subsection{Application: functors between Deligne categories.}

In this section we will show how our result on classification of commutative algebras helps us classify symmetric tensor functors $\Rep (S_t) \to \Rep(S_{t'})$ and also their generalization $\Rep(S_t) \to \Rep(S_{t_1'}) \boxtimes \dots \boxtimes \Rep(S_{t_k'})$.

We will start with functors  $\Rep (S_t) \to \Rep(S_{t'})$, for $t,t' \notin \mathbb Z_{\ge 0}$. From Proposition 1.2.5 we know that all such functors are classified by commutative Frobenius algebras of dimension $t$ in $\Rep(S_{t'})$. We will start with the following lemma.

\begin{lemma}
Any commutative Frobenius algebra $A \in \Rep(S_t)$ is isomorphic to the direct sum of simple commutative algebras.
\end{lemma}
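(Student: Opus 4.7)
The plan is to decompose $A$ via central idempotents extracted from its algebra of invariants, and then identify each summand as a simple commutative algebra using Lemma 2.2.1. The essential input is that the Frobenius trace in Proposition 1.2.5 is the \emph{canonical} categorical trace built from $ev_A$ and $coev_A$, so that in each ultraproduct factor of $A = \prod_{\mathcal F} A_n$ its value agrees with the ordinary linear trace of left multiplication on the underlying vector space.

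First I would introduce $V := \Hom_{\Rep(S_t)}(\mathbb C, A)$, a finite-dimensional commutative $\mathbb C$-algebra (finite-dimensional because $A$ has only finitely many simple summands). Non-degeneracy of the Frobenius pairing $A \otimes A \to \mathbb C$ says the induced map $\phi\colon A \to A^*$ is an isomorphism; applying $\Hom(\mathbb C, -)$, using rigidity to identify $\Hom(\mathbb C, A^*) = \Hom(A, \mathbb C)$, and restricting along the inclusion $V \otimes \mathbb C \hookrightarrow A$ produces an isomorphism $V \to V^*$ which coincides with the pairing $(v, w) \mapsto \Tr(vw)$ on $V$. Hence this restricted pairing is non-degenerate.

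Next I would show that $V$ is semisimple. For $v \in V$ with representatives $v_n \in A_n^{S_{t_n}}$ one has $\Tr(v) = \prod_{\mathcal F} \mathrm{tr}(L_{v_n})$, where $L_{v_n}\colon A_n \to A_n$ is left multiplication. If $v$ is nilpotent then $v_n$ is nilpotent for almost all $n$ by \lthm, so $L_{v_n}$ is a nilpotent linear endomorphism and $\mathrm{tr}(L_{v_n}) = 0$. Since $V$ is commutative, $vw$ is nilpotent for any $w \in V$, hence $\Tr(vw) = 0$; non-degeneracy then forces $v = 0$, so $V$ has trivial nilradical. Thus $V = \bigoplus_{i=1}^k \mathbb C e_i$ for pairwise orthogonal primitive idempotents summing to $1_A$, each $e_i$ is a central idempotent of $A$, and $A = A_1 \oplus \cdots \oplus A_k$ decomposes as a direct sum of algebras with $A_i := e_i A$. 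Each $A_i$ is a commutative Frobenius algebra (since $A_i A_j = 0$ for $i \neq j$, the pairing restricts non-degenerately to each $A_i$) with $\Hom(\mathbb C, A_i) = \mathbb C e_i$ one-dimensional.

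Finally, to show each $A_i$ is simple I would pass to $A_i = \prod_{\mathcal F} A_{i,n}$, each $A_{i,n}$ a commutative Frobenius $S_{t_n}$-algebra over $\overline{\mathbb F}_{p_n}$ in the canonical-trace sense. Applying the same nilpotent-vanishing argument pointwise (now to arbitrary elements, not just invariants) shows each $A_{i,n}$ is reduced, hence $A_{i,n} \simeq \overline{\mathbb F}_{p_n}[X_{i,n}]$ for a finite $S_{t_n}$-set $X_{i,n}$. One-dimensionality of $\Hom(\mathbb C, A_i)$ translates by \lthm to transitivity of $X_{i,n}$ for almost all $n$, so $A_{i,n} = \Ind^{S_{t_n}}_{G_n}(\overline{\mathbb F}_{p_n})$ for a subgroup $G_n \subset S_{t_n}$ of bounded index; Lemma 2.2.1 then pins $G_n$ down to $S_{t_n - j} \times H$ for a fixed $j$ and $H \subset S_j$. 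Thus $A_i \cong \Ind^{S_t}_{S_{t - j} \times H}(\mathbb C)$, a simple commutative algebra in $\Rep(S_t)$. The main obstacle is the nilpotent-vanishing step: the canonical nature of the categorical trace is indispensable, since for an algebra like $\mathbb C[x]/(x^2)$ equipped with an abstract (non-canonical) Frobenius form the argument fails and the lemma would be false.
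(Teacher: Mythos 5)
Your proof is correct and rests on exactly the same key observation as the paper's: the Frobenius trace in the definition is the canonical categorical trace, so it is the ordinary trace of left multiplication in each ultraproduct factor $A_n$, and therefore it vanishes on nilpotent elements, forcing the nilradical of $A_n$ to be zero. The paper goes more directly — it shows each $A_n$ is reduced outright, notes that a reduced finite-dimensional commutative $S_{t_n}$-algebra is the algebra of functions on a finite $S_{t_n}$-set, and hence a direct sum of algebras $\Fun_{H_j}(S_{t_n},\overline{\mathbb F}_{p_n})$, and then transfers semisimplicity via Łoś — whereas your extra preliminary layer (decomposing $A$ by orthogonal idempotents inside $\Hom(\mathbb C,A)$, then reducing each piece modulo $n$ and invoking Lemma 2.2.1) is correct but not needed: once $A_n$ is known to be reduced, its orbit decomposition already gives the direct sum of simple algebras without any appeal to Lemma 2.2.1.
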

\begin{proof}
Using previous notation, we know from \lthm that $A$ corresponds to a sequence of objects $A_n \in \mathcal C_n$, with almost all of them being commutative Frobenius algebras. Now note that such an algebra cannot have a non-trivial radical. Indeed since $\sqrt{0}$ lies in every maximal ideal, it also lies in the kernel of $\Tr$. But then for any element $N \in \sqrt{0}$, we have $\Tr(a \cdot N) = 0$, hence the form is degenerate, which is a contradiction. So almost all $A_n$ are semisimple as commutative algebras in the category of vector spaces. Also all of them are finite-dimensional, since they are  objects of $\mathcal C_i$.

Thus it follows that the action of $S_{t_n}$ on such an $A_n$ arises from the action of $S_{t_n}$ on $\mspec (A_n)$. And now if the action of $S_{t_n}$ on $\mspec (A_n)$ has $l_n$ orbits with stabilizers $H_1,\dots, H_{l_n}$, it follows that $A_n = \bigoplus \Fun_{H_{j}}(S_{t_n}, \overline{\mathbb F_{p_n}})$. 

So almost all $A_n$ are semisimple commutative algebras as objects of $\mathcal C_i$, so by \lthm the same holds for $A$.
\end{proof}

We also need another lemma.

\begin{lemma}
Any semisimple commutative algebra $A \in \Rep(S_t)$ is a Frobenius algebra.
\end{lemma}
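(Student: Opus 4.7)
My plan is to combine the classification of simple commutative algebras with the ultraproduct interpretation of $\Rep(S_t)$, together with the classical observation that a finite-dimensional semisimple commutative algebra over an algebraically closed field is canonically Frobenius.

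First I would reduce to the simple case. By definition (cf.\ the discussion preceding Lemma 3.2.1) a semisimple commutative algebra decomposes as a direct sum $A = \bigoplus_i A^{(i)}$ of simple commutative subalgebras, and a finite direct sum of Frobenius algebras is again Frobenius with componentwise trace, so it is enough to prove each $A^{(i)}$ is Frobenius. By the commutative case of Theorem 3.1.2 (the one-dimensional specialization of the associative classification, recovering \cite{sciarappa2015simple},\cite{harman2016deligne}), each simple commutative $A^{(i)}$ has the form $\Ind^{S_t}_{S_{t-j_i}\times H_i}(\mathbf 1)$, which under the ultraproduct description reads
$$A^{(i)} = \prod_{\mathcal F} \Fun\bigl(S_{t_n}/(S_{t_n-j_i}\times H_i),\,\overline{\mathbb F}_{p_n}\bigr).$$

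Next I would verify Frobeniusness at the level of the factors $B_n := \Fun(S_{t_n}/(S_{t_n-j_i}\times H_i),\overline{\mathbb F}_{p_n})$. Each $B_n$ is a finite-dimensional semisimple commutative algebra over the algebraically closed field $\overline{\mathbb F}_{p_n}$, with $S_{t_n}$ permuting the primitive idempotents $\delta_x$ (point-indicator functions). A direct unwinding of the definition of $\Tr$ in Section~1.2, using $\coev(1)=\sum_x \delta_x\otimes \delta_x^*$ in the idempotent basis, yields $\Tr_n(\delta_x)=1$ for every $x$, so the categorical trace coincides with the counting trace $f \mapsto \sum_x f(x)$ and the induced pairing $(f,g)\mapsto \sum_x f(x)g(x)$ is non-degenerate (the $\{\delta_x\}$ form a self-dual basis). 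Since this pairing is manifestly $S_{t_n}$-invariant, $B_n$ is Frobenius in $\textbf{Rep}_{p_n}(S_{t_n})$ for almost all $n$. Applying Łoś's theorem to the first-order statement ``the morphism $B_n \to B_n^*$ induced by the trace pairing is an isomorphism'' transfers this property to the ultraproduct, so $A^{(i)}$, and hence $A$, is Frobenius.

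The only subtle point is the identification of the categorical trace with the counting trace, but this is a routine calculation in the idempotent basis, where both constructions evaluate to $1$ on each primitive idempotent; everything else is bookkeeping with the classification and Łoś's theorem.
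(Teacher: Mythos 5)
Your proposal is correct and follows essentially the same route as the paper: reduce to a simple summand, write it as $\Ind^{S_t}_{S_{t-j}\times H}(\mathbf 1)$, check non-degeneracy of the trace pairing at finite level in the basis of coset indicator functions (where $h_ih_j=\delta_{ij}h_i$ and $\Tr(h_i)=1$), and transfer to the ultraproduct. The only cosmetic difference is that you spell out the identification of the categorical trace with the counting trace and invoke Łoś explicitly, which the paper leaves implicit.
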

\begin{proof}
We know that $A = \bigoplus_{i=1}^N A_i$, where $A_i$ are simple algebras. Note that  $\Tr_{A}$ is equal to $(\Tr_{A_1}, \dots, \Tr_{A_N})$, so if we prove that each $A_i$ is Frobenius algebra it will follow that $A$ is too.

So consider $A=\Ind_{S_{t-j} \times H}^{S_t}(\bold 1)$. We want to prove that $\Tr \circ \mu$ is a non-degenerate form on $A$. To do that it is enough to prove that for $n$ such that $t_n > j$ the corresponding form on $A_n= \Ind_{S_{t_n-j}\times H}^{S_{t_n}}(\bold 1) = \Fun_{S_{t_n-j}\times H}(S_{t_n}, \bold 1)$ is non-degenerate. 

Consider functions $h_i$ which are zero everywhere except one conjugacy class of  \\ $S_{t_n-j}\times H$, where they are equal to $1$. Such functions give us a basis of $A_n$. Now obviously $h_ih_j = \delta_{ij}h_i$. Hence $\Tr(h_i) = 1$, and $\Tr(h_ih_j) = \delta_{ij}$. So the form is indeed non-degenerate, and we are done.
\end{proof}

From these two lemmas and Proposition 1.2.5 it follows:
\begin{prop}
All $\mathbb C$-linear symmetric tensor functors between Deligne categories $\Rep(S_t) \to \Rep(S_{t'})$ for $t,t'\notin \mathbb Z_{\ge 0}$ are in 1-1 correspondence with semisimple commutative algebras in $\Rep(S_{t'})$ of dimension $t$.
\end{prop}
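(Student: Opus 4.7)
The plan is to combine the universal property of the Deligne category (Proposition 1.2.5) with the two lemmas just established. Proposition 1.2.5 asserts that $\mathbb{C}$-linear symmetric tensor functors $\Rep(S_t) \to \Rep(S_{t'})$ are in bijection with commutative Frobenius algebras of dimension $t$ in $\Rep(S_{t'})$. So it suffices to identify, inside $\Rep(S_{t'})$, the class of commutative Frobenius algebras of dimension $t$ with the class of semisimple commutative algebras of dimension $t$.

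For one inclusion I would simply apply Lemma 3.2.1 to the target category $\Rep(S_{t'})$: any commutative Frobenius algebra there is a direct sum of simple commutative algebras, hence in particular semisimple. For the reverse inclusion I would invoke Lemma 3.2.2, which says every semisimple commutative algebra in $\Rep(S_{t'})$ is automatically Frobenius. Since the dimension of an algebra in a symmetric tensor category is preserved under each of these identifications, the correspondence respects the constraint that the dimension equals $t$.

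Putting these together, the set of commutative Frobenius algebras of dimension $t$ in $\Rep(S_{t'})$ coincides with the set of semisimple commutative algebras of dimension $t$ in $\Rep(S_{t'})$, and transporting this through Proposition 1.2.5 gives the claimed 1-1 correspondence. The only mild subtlety is to verify that the bijection of Proposition 1.2.5 actually restricts to isomorphism classes on both sides in a well-defined manner, but this is automatic since isomorphisms of Frobenius algebras are the same as isomorphisms of the underlying commutative algebras whose trace pairing happens to be non-degenerate.

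There is no real obstacle here: all the technical work has been absorbed into Lemmas 3.2.1 and 3.2.2. The hardest ingredient was Lemma 3.2.1, which required the ultraproduct description of $\Rep(S_t)$ and the observation that a commutative Frobenius algebra (in the vector space category) cannot have nontrivial nilpotents because $\sqrt{0}$ would lie in the kernel of $\Tr$. Once that is in place, the present proposition is a direct corollary.
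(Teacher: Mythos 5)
Your proposal is correct and matches the paper's proof essentially verbatim: both combine Proposition 1.2.5 with the two lemmas (the paper cites them as Lemmas 3.1.1 and 3.1.2, though they are the two lemmas in the subsection you reference) to identify commutative Frobenius algebras of dimension $t$ with semisimple commutative algebras of dimension $t$. The only addition in your write-up is the brief remark about isomorphism classes, which the paper leaves implicit.
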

\begin{proof}
From Proposition 1.2.5 we know that such functors are in 1-1 correpondence with commutative Frobenius algebras of dimension $t$, but from Lemmas 3.1.1 and 3.1.2 we know that any commutative Frobenius algebra is a semisimple commutative algebra and vice versa.
\end{proof}

Now since the dimension of $\Ind_{S_{t_n-k}\times H}^{S_{t_n}}(\mathbb C)$ is ${ t_n \choose k}  \frac{k!}{|H|}$, it follows that dimensions of simple commutative algebras in $\Rep(S_t)$ are multiples of ${ t \choose k}$ for some integer $k$, and thus all possible dimensions of commutative Frobenius algebras are positive integer linear combinations of ${t \choose k}$. Or in other words they can be described as $f(t)$, where $f \in R_+$ and $R_+$ is the subset in the algebra of integer valued polynomials, of positive integer linear combination of binomial coefficients. Hence we have the following Corollary.   
\begin{cor}
Symmetric monoidal functors between Deligne categories $\Rep(S_t) \to \Rep(S_{t'})$ with $t,t'\notin \mathbb Z_{\ge 0}$ exist iff $t = f(t')$ for some $f \in R_+$. 
\end{cor}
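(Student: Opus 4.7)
The plan is to combine Proposition 3.1.3 with the explicit dimension formula for induced commutative algebras, obtained from the classification (Theorem \ref{assoc} / Remark 3.1.3). By Proposition 3.1.3, symmetric tensor functors $\Rep(S_t) \to \Rep(S_{t'})$ correspond bijectively to semisimple commutative algebras $A$ in $\Rep(S_{t'})$ with $\dim A = t$. So the existence question reduces to characterizing the set of achievable dimensions.

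For the forward direction, suppose such a functor exists, giving a semisimple commutative algebra $A \in \Rep(S_{t'})$ of dimension $t$. Decompose $A = \bigoplus_i A_i$ into simple commutative summands. By the special case of Theorem \ref{assoc} noted in Remark 3.1.3 (the one-dimensional $B$), each $A_i$ is of the form $\Ind^{S_{t'}}_{S_{t'-k_i}\times H_i}(\mathbb C)$ for some $k_i \in \mathbb Z_{\ge 0}$ and $H_i \subset S_{k_i}$. Using the ultraproduct description (or directly, since in finite rank $\Ind^{S_N}_{S_{N-k}\times H}(\mathbb C)$ has dimension $\binom{N}{k} \cdot k!/|H|$), one gets $\dim A_i = \binom{t'}{k_i}\cdot k_i!/|H_i|$, a nonnegative integer multiple of $\binom{t'}{k_i}$. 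Summing, $t = \dim A = f(t')$ where $f = \sum_i \tfrac{k_i!}{|H_i|} \binom{x}{k_i} \in R_+$.

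For the converse, given $f = \sum_{k\ge 0} c_k \binom{x}{k}$ with $c_k \in \mathbb Z_{\ge 0}$ (all but finitely many zero), I would exhibit the algebra
\[
A \;=\; \bigoplus_{k\ge 0}\; \Ind^{S_{t'}}_{S_{t'-k}\times S_k}(\mathbb C)^{\oplus c_k}.
\]
Each summand $\Ind^{S_{t'}}_{S_{t'-k}\times S_k}(\mathbb C)$ is a simple commutative algebra (at the finite level it is the algebra of functions on $k$-element subsets, whose ultraproduct is commutative by Łoś's theorem), with dimension $\binom{t'}{k}$. So $A$ is a direct sum of simple commutative algebras with $\dim A = f(t') = t$.

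The only loose end is that Proposition 3.1.3 requires $A$ to be commutative Frobenius, not merely semisimple commutative. This is handled by Lemma 3.1.2, which guarantees that any semisimple commutative algebra in $\Rep(S_{t'})$ is automatically Frobenius. Hence $A$ yields the desired functor, completing the equivalence. No serious obstacle arises; the argument is essentially a bookkeeping consequence of the classification, with the main content already packaged in Proposition 3.1.3 and the explicit dimension formula for inductions from $S_{t'-k}\times H$.
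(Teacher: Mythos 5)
Your proof is correct and follows essentially the same route as the paper: the paper's argument (packed into the paragraph preceding the corollary) likewise combines Proposition 3.1.3 with the dimension formula $\dim \Ind^{S_{t'}}_{S_{t'-k}\times H}(\mathbb{C}) = \binom{t'}{k}\cdot k!/|H|$ to identify the achievable dimensions with $\{f(t') : f \in R_+\}$. You spell out the forward and converse directions more explicitly (in particular exhibiting $\bigoplus_k \Ind^{S_{t'}}_{S_{t'-k}\times S_k}(\mathbb{C})^{\oplus c_k}$ for the converse), but this is the same bookkeeping the paper is doing implicitly.
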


\begin{rem}
One can obtain a similar description for $\mathbb C$-linear symmetric tensor functors $\Rep(S_t) \to \Rep(S_{t_1'}) \boxtimes \dots \boxtimes \Rep(S_{t_k'})$. Such functors are in 1-1 correspondence with finite sums of external tensor products of simple commutative algebras in $\Rep(S_{t_i'})$. And such a functor exists iff $t$ is the positive integer linear combination of products of binomial coefficients in $t_1',\dots,t_n'$.
\end{rem}

To strengthen the result we can solve a simple number-theoretic problem.
\begin{lemma}
Consider a rational number $\frac{r}{s}$ with $r,s$ -- coprime positive integers and $s>1$. Let $p_1, \dots, p_m$ denote all prime numbers which divide $s$. Then the set $R_+(\frac{r}{s}) = \{ f(\frac{r}{s}) | \ f \in R_+ \}$ is equal to $\mathbb Z[1/s] = \{ \frac{n}{p_1^{k_1} \dots p_m^{k_m}} | k_i \in \mathbb Z_{\ge 0}\}$.
\end{lemma}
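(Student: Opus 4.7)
\medskip

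\noindent\textbf{Proof plan.} The plan is to prove the two inclusions $R_+(r/s)\subseteq \mathbb Z[1/s]$ and $\mathbb Z[1/s]\subseteq R_+(r/s)$ separately. The first is a direct $p$-adic valuation estimate; the second rests on showing that $R_+(r/s)$ is in fact a subring of $\mathbb Q$ containing $r/s$, after which B\'ezout's identity finishes the argument.

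For $R_+(r/s)\subseteq \mathbb Z[1/s]$ it suffices to check that each $\binom{r/s}{k}$ lies in $\mathbb Z[1/s]$. Writing
$$
\binom{r/s}{k}=\frac{r(r-s)(r-2s)\cdots(r-(k-1)s)}{s^k\, k!},
$$
the only primes that could appear in the denominator are those dividing $s$ or those dividing $k!$. For every prime $p\nmid s$ I need $v_p$ of the numerator to be at least $v_p(k!)$. Since $s$ is a unit modulo $p^j$, the arithmetic progression $r,\,r-s,\,\ldots,\,r-(k-1)s$ hits every residue class modulo $p^j$ with period $p^j$, so at least $\lfloor k/p^j\rfloor$ of its terms are divisible by $p^j$. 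Summing over $j\ge 1$ and invoking Legendre's formula $v_p(k!)=\sum_{j\ge 1}\lfloor k/p^j\rfloor$ gives the desired inequality.

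For the reverse inclusion the strategy is to show that $R_+$ is closed under multiplication and that $-1\in R_+(r/s)$, which together make $R_+(r/s)$ a subring of $\mathbb Q$. Multiplicative closure of $R_+$ as a subset of the integer-valued polynomials follows from the classical combinatorial identity
$$
\binom{t}{i}\binom{t}{j}=\sum_{k=\max(i,j)}^{i+j}\binom{k}{i+j-k}\binom{2k-i-j}{k-j}\binom{t}{k},
$$
whose coefficients are manifestly nonnegative integers (verified by counting pairs of subsets of an $N$-element set, graded by the size of their union). To produce $-1\in R_+(r/s)$, I use that $r/s$ is positive and non-integer (since $s>1$ and $\gcd(r,s)=1$): setting $m=\lfloor r/s\rfloor$, exactly one of the $m+2$ factors $r/s-i$ appearing in the numerator of $\binom{r/s}{m+2}$ -- namely $r/s-(m+1)$ -- is negative, so $\binom{r/s}{m+2}=-p/q$ in lowest terms with $p,q\in\mathbb Z_{>0}$. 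Then $q\binom{t}{m+2}+(p-1)\binom{t}{0}$ lies in $R_+$ and evaluates to $-p+(p-1)=-1$ at $t=r/s$.

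Combining these two inputs, $R_+(r/s)$ is a subring of $\mathbb Q$ containing $r/s=\binom{r/s}{1}$. By B\'ezout, there are integers $a,b$ with $ar+bs=1$, hence $1/s=a(r/s)+b\in\mathbb Z[r/s]$, so $\mathbb Z[r/s]=\mathbb Z[1/s]$. Therefore $\mathbb Z[1/s]\subseteq R_+(r/s)$, which together with the first inclusion yields $R_+(r/s)=\mathbb Z[1/s]$. The main obstacle is really the multiplicative-closure identity: the valuation bound and the construction of $-1$ are both short, but the combinatorial identity above -- which upgrades $R_+$ from a mere $\mathbb Z$-submodule of the integer-valued polynomials to a subsemiring -- is the conceptual input that lets the final ring-theoretic argument go through.
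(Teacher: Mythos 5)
Your proof is correct and follows the same high-level strategy as the paper — valuation bound for $R_+(r/s)\subseteq\mathbb Z[1/s]$, multiplicative closure of $R_+$, and production of a negative element from a binomial coefficient $\binom{r/s}{k}$ — but the two arguments diverge in a couple of places worth noting. For multiplicative closure the paper extracts coefficients from the generating-function identity $(1+z)^t(1+w)^t=(1+(z+w+zw))^t$, while you write down the explicit combinatorial expansion; these are the same fact and either works. The more substantive difference is the endgame. The paper takes the negative value $N/M$, combines it with $1$ via B\'ezout to manufacture $\pm 1/M$, squares to get $1/M^2$, observes $M$ is divisible by every $p_i$, and then assembles $\mathbb Z[1/s]$ piece by piece. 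You instead take the minimal $k$ with $\binom{r/s}{k}<0$ (so exactly one factor is negative), scale and add a nonnegative multiple of $\binom{t}{0}$ to land precisely on $-1$, and conclude $R_+(r/s)$ is a subring of $\mathbb Q$ containing $r/s$; then $\mathbb Z[r/s]=\mathbb Z[1/s]$ is immediate from B\'ezout. This is cleaner and more conceptual: it isolates the single ring-theoretic input ($-1\in R_+(r/s)$) and lets standard algebra do the rest, whereas the paper's route re-proves bits of the statement $\mathbb Z[1/s]\subseteq\mathbb Z[r/s]$ by hand. Both proofs are valid; yours is tidier.
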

\begin{proof}
First note that $R_+(\frac{p}{q})$ is closed under multiplication. It is enough to prove that the product of ${ t \choose i}$ and ${ t \choose j}$ can be expressed as a positive integer linear combination of ${ t \choose k}$. This fact follows from the identity $(1+z)^t(1+w)^t = (1 + (z+w+zw))^t$. Indeed, expanding this we get:
$$
\sum_{i,j}{t \choose i} {t\choose j}z^iw^j = \sum_k {t \choose k} (z+w+zw)^k \ ,
$$
so it follows that ${t \choose i}{t \choose j}$ is equal to the coefficient of $z^iw^j$ in $\sum_k {t \choose k} (z+w+zw)^k$, which is going to be a positive integer linear combination of ${t \choose k}$ since all coefficients in the expansion of $(z+w+zw)^k$ are positive integers.

Now consider ${\frac{r}{s} \choose k} = \frac{r(r-s)(r-2s) \cdot \dots \cdot (r-(k-1)s)}{k!s^k}$. If we consider any prime number $q$ which does not divide $s$, then $s\cdot l$ is divisible by $p^d$ iff $l$ is divisible by $p^d$. Hence in the sequence $r, \ r-s, \ r-2s \ , \dots$ the numbers divisible by $p$ are $p$ terms apart, divisible by $p^2$ -- $p^2$ terms apart and so on. Hence the valuation $\nu_{p}(\prod_{j=0}^k (r-js)) \ge \nu_{p}(\prod_{j=0}^k(1+j))$.

Thus if we write $k! = h(k)g(k)$, where $h(k)$ is the part which contains the product of all prime numbers which do not divide $s$, and $g(k)$ is the remainder, then we have: $\frac{r(r-s)\dots(r-(k-1)s)}{k!} = \frac{N}{g(k)}$ for some integer $N$. So it follows that:
$$
{\frac{r}{s} \choose k}  = \frac{N}{g(k)s^k} \in  \mathbb Z[1/s] .
$$
Since $\mathbb Z[1/s]$ is obviously closed under addition, it follows that $R_+(\frac{r}{s}) \subset \mathbb Z[1/s]$, so we need to show the other inclusion.
Consider first $k$ such that $r < s(k-1)$, then ${\frac{r}{s} \choose k}$ turns out to be negative. So we have an element $\frac{N}{M}$ with $N < 0$ and $N, M$ -- coprime in $R_+(\frac{r}{s})$. We also have $1 \in R_+(\frac{r}{s})$. Since $N$ and $M$ are coprime it follows that there are integers $a,b$ such that $aN +bM = 1$. Note that if $a<0$ and $b>0$, then the sum is at least $N+M>1$. The same problem arises with $a>0$ and $b<0$. So $a,b$ have the same sign. So making them positive we can arrange for $aN+bM = \pm 1$. So it follows that $a\frac{N}{M} + b = \frac{\pm 1}{M} \in R_+$ and since $R_+$ is closed under multiplication $\frac{1}{M^2} \in R_+(\frac{r}{s})$. Note that since $M$ was divisible at least by $s$, it is divisible by all primes $p_1, \dots, p_m$. Hence by multiplying by an integer we conclude that $\frac{1}{p_i} \in R_+(\frac{r}{s})$, hence by taking products an multiplying by positive integers all $\frac{n}{p_1^{k_1} \dots p_m^{k_m}}$ with $n\ge 0$ are in $R_+(\frac{r}{s})$.

So, since $-N-1 \ge 0$, it follows that $\frac{-N-1}{M}$ is in $R_+(\frac{r}{s})$, thus $\frac{-1}{M}$ is in $R_+(\frac{r}{s})$ and by the same logic all $\frac{-1}{p_i} \in R_+(\frac{p}{q})$. Thus $\mathbb Z[1/s] \subset R_+(\frac{r}{s})$ and we conclude that $R_+(\frac{r}{s})=\mathbb Z[1/s]$.
\end{proof}

\begin{rem}
Note that since $r,s$ are coprime, it follows that $\mathbb Z[\frac{1}{s}] = \mathbb Z[\frac{r}{s}]$. Also it is easy to see that if $l \in \mathbb Z_{<0}$, then $\mathbb Z[l] = \mathbb Z$ and $R_+(l) = \mathbb Z$. 
\end{rem}

Now we can apply this lemma to get an interesting result about functors into Deligne categories.

\begin{cor}
A symmetric monoidal functor between Deligne categories $\Rep(S_t) \to \Rep(S_{t'})$ with $t' \in \mathbb Q \backslash \mathbb Z_{\ge 0}$ exists iff $t\in \mathbb Z[t']$. 
\end{cor}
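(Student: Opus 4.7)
The plan is to combine Corollary 3.1.4, Lemma 3.1.5, and the subsequent remark. By Corollary 3.1.4 (applied with the implicit hypothesis $t \notin \mathbb Z_{\ge 0}$), a $\mathbb C$-linear symmetric tensor functor $\Rep(S_t) \to \Rep(S_{t'})$ exists iff $t \in R_+(t')$. So the entire task reduces to identifying the subset $R_+(t') \subset \mathbb C$ with $\mathbb Z[t']$ whenever $t' \in \mathbb Q \setminus \mathbb Z_{\ge 0}$.

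This identification splits naturally into two cases. If $t'$ is a negative integer, then the remark after Lemma 3.1.5 immediately gives $R_+(t') = \mathbb Z = \mathbb Z[t']$, so we are done. If $t'$ is a non-integer rational, write $t' = r/s$ in lowest terms with $s > 1$. Then Lemma 3.1.5 yields $R_+(t') = \mathbb Z[1/s]$, while the same remark gives $\mathbb Z[1/s] = \mathbb Z[r/s] = \mathbb Z[t']$. Chaining these equalities finishes the identification.

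The only mild obstacle is that Lemma 3.1.5 is stated for a positive numerator $r$, whereas $\mathbb Q \setminus \mathbb Z_{\ge 0}$ also contains negative non-integer rationals. For such a $t'$, the proof of Lemma 3.1.5 still goes through unchanged: the valuation argument controlling the denominator of ${t' \choose k}$ compares $p$-adic valuations along the arithmetic progression $r, r - s, r - 2s, \ldots$, which is insensitive to the sign of $r$; and the Bezout step producing a unit fraction in $R_+(t')$ is in fact easier here since ${t' \choose 1} = t' < 0$ already provides a negative element whose reduced form is $r/s$ with $\gcd(|r|, s) = 1$, so no large $k$ needs to be chosen. Hence the conclusion $R_+(t') = \mathbb Z[1/s]$ holds in all relevant cases, and the corollary follows.
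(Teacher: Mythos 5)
Your proof is correct and follows exactly the route the paper implies: the corollary is stated without proof as a direct consequence of Corollary 3.1.4, Lemma 3.1.5, and the subsequent remark, and you chain those together in the obvious way. Your observation that Lemma 3.1.5 is formally stated only for positive numerator $r$ is a genuine (if minor) gap in the paper's exposition, and your explanation of why its proof survives for negative $r$ --- the $p$-adic valuation comparison along the progression $r, r-s, r-2s, \ldots$ is sign-blind because it only uses $\gcd(s,p)=1$, and ${t' \choose 1}=t'$ already supplies the needed negative element $\frac{N}{M}$ with $M$ divisible by every prime factor of $s$ --- is correct and worth recording.
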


\section{Classification of simple Lie algebras in $\Rep(S_t)$.}

First we need to state the classification theorem for Lie algebras in characteristic $p$. See chapter 4 of \cite{strade2004simple}.

\begin{thm}
Suppose $\mathfrak g$ is a simple finite-dimensional Lie algebra over an algebraically closed field of characteristic $p>5$. Then it is either of classical or Cartan type.
\end{thm}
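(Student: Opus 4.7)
The theorem is the celebrated Block--Wilson--Premet--Strade classification of simple finite-dimensional modular Lie algebras in characteristic $p > 5$, proved over several decades in a long series of papers and assembled in Strade's monograph \cite{strade2004simple}. Any honest proof plan is therefore a plan to follow that program; I would not attempt a shortcut.

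The strategy is to analyze $\mathfrak{g}$ via its root-space decomposition with respect to a maximal torus $T \subset \mathfrak{g}$, by which one means a maximal abelian subalgebra consisting of $\mathrm{ad}$-semisimple elements. Write $\mathfrak{g} = T \oplus \bigoplus_{\alpha} \mathfrak{g}_{\alpha}$, and study the $1$-sections $\mathfrak{g}[\alpha] = T + \bigoplus_{k \in \mathbb{F}_p^\times} \mathfrak{g}_{k\alpha}$ together with the $2$-sections $\mathfrak{g}[\alpha, \beta]$ generated by pairs of linearly independent roots. The crux is the classification of these sections: Premet's theorem identifies the simple quotients of $1$-sections as essentially $\mathfrak{sl}_2$ or the Zassenhaus algebras $W(1;n)$, and the classification of $2$-sections (the hardest technical input) constrains the global root pattern enough that $\mathfrak{g}$ must fit into one of two broad patterns.

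In the classical pattern, every nonzero root space is one-dimensional with the usual $\mathfrak{sl}_2$-triples and the root system is crystallographic; one then recognizes $\mathfrak{g}$ as the classical analogue of a complex simple Lie algebra (types $A$--$G$). In the non-classical pattern, one constructs a canonical decreasing filtration on $\mathfrak{g}$ using the toral action, invokes Weisfeiler's theorem to show that the associated graded algebra is a graded Cartan-type algebra, and then appeals to the Kac recognition theorem for graded Cartan-type algebras to identify $\mathfrak{g}$ itself with a filtered deformation $W(m;\underline{n})$, $S(m;\underline{n};\Phi)^{(1)}$, $H(m;\underline{n};\Phi)^{(2)}$, or $K(m;\underline{n};\Phi)^{(1)}$.

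The main obstacle is unquestionably the classification of simple Lie algebras of absolute toral rank $2$: this is the most delicate step of the program, already the content of a book-length argument, and it is what forces the restriction on $p$. The hypothesis $p > 5$ is essential, since at $p = 5$ the Melikian algebras appear as an additional third family, while in very small characteristic the known classification is expected to fail entirely.
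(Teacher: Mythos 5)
The paper does not prove this theorem: it is quoted as a black box, with a pointer to Chapter~4 of \cite{strade2004simple} (this is the Block--Wilson--Premet--Strade classification, whose proof spans several decades and multiple volumes). Your response is therefore the right one: you correctly decline to reprove it and instead give a faithful high-level summary of the actual program. Your sketch is accurate in its broad strokes --- root space decomposition with respect to a maximal torus of $\mathrm{ad}$-semisimple elements, analysis of $1$-sections via Premet's theorem (yielding $\mathfrak{sl}_2$ and Zassenhaus algebras $W(1;n)$ as the simple constituents), the classification of $2$-sections as the technical heart, and then the dichotomy between the classical recognition via the root lattice and the Cartan-type recognition via the Weisfeiler filtration together with Kac's recognition theorem for graded Lie algebras of Cartan type. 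You also correctly identify why $p > 5$ is the right threshold: at $p = 5$ the Melikian algebras form a genuine third family, and at $p = 2, 3$ the landscape of simple Lie algebras is not under control. The one thing worth adding, since it is relevant to how the theorem is used downstream in the paper, is that the ``Cartan type'' Lie algebras that appear in the conclusion are filtered deformations of the graded algebras $W$, $S$, $H$, $K$, not literally the graded algebras themselves; the paper glosses over this distinction because the dimension bounds and automorphism-group structure it needs (Propositions 4.1.3--4.1.5) pass to the filtered deformations without change.
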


Now we need to explain that classical and Cartan type mean. First, classical type Lie algebras can be obtained in the following way. Take any Dynkin diagram $C$ and define the Lie algebra $\mathfrak g_C$ as the vector space spanned by Chevalley basis corresponding to $C$ with the ordinary Chevalley relations taken modulo $p$. It turns out that this algebra is simple for any $C$ except $A_{kp-1}$ for a positive integer $k$. In this case we also need to take quotient by $1$-dimensional center of $\mathfrak{sl}_{kp}$ spanned by scalar matrices and we get the simple algebra $\mathfrak{psl}_{kp}$.

Algebras of Cartan type form four series of simple Lie algebras, namely $W(m, \underline{n})$, $S(m, \underline{n})$, $H(m,\underline{n})$ and $K(m,\underline{n})$, where $m \in \mathbb Z_{> 0}$ and $\underline{n} \in \mathbb Z_{> 0}^{m}$ (in the last case $m$ is odd, in the second to last case it is even). We will discuss some of their properties in the next subsection.

The result analogous to Theorem 3.0.1 also holds in the case of Lie algebras, we only need to exchange word ``associative" to ``Lie" in the statement of the Theorem (\cite{etingof2017semisimple}). 
\begin{thm}
Fix an algebraicly closed field $k$.
Any simple Lie algebra in $\Rep(S_N,k)$ is isomorphic to $\Fun_G(S_N,\mathfrak h)$, for a Lie algebra $\mathfrak h$ simple in a category of vector spaces. All such algebras are simple. Moreover $G$ is defined up to conjugation in $S_N$ and the action of $G$ on $\mathfrak h$ up to conjugation in $\Aut(\Mat_m(k))$.
\end{thm}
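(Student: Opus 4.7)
The plan is to adapt the orbit-decomposition argument from Theorem 3.0.1, replacing matrix algebras with simple Lie algebras in Vect. Let $L$ be a simple Lie algebra in $\Rep(S_N,k)$. By finite-dimensionality, $L$ contains a minimal nonzero Lie ideal $\mathfrak{h}$ (not required to be $S_N$-stable). Set $G=\text{Stab}_{S_N}(\mathfrak{h})$, fix coset representatives $g_1,\dots,g_m$ for $S_N/G$, and write $\mathfrak{h}_i:=g_i(\mathfrak{h})$; each is a minimal Lie ideal, the $S_N$-orbit is $\{\mathfrak{h}_1,\dots,\mathfrak{h}_m\}$, and the sum $\sum_i \mathfrak{h}_i$ is an $S_N$-invariant Lie ideal, equal to $L$ by simplicity.

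Next I would establish that the decomposition is direct as Lie algebras. For $i\ne j$, the intersection $\mathfrak{h}_i\cap \mathfrak{h}_j$ is a Lie ideal of $L$ properly contained in each minimal $\mathfrak{h}_l$, hence zero, and therefore $[\mathfrak{h}_i,\mathfrak{h}_j]\subseteq \mathfrak{h}_i\cap\mathfrak{h}_j=0$. The center $Z(L)$ is an $S_N$-invariant ideal, so simplicity forces $Z(L)=0$ or $Z(L)=L$; assuming $L$ is non-abelian (the abelian case reduces to classifying irreducible $S_N$-submodules and falls under the same formalism with $\mathfrak{h}$ one-dimensional), we get $Z(L)=0$. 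The cross-bracket vanishing gives $[L,Z(\mathfrak{h}_i)]=[\mathfrak{h}_i,Z(\mathfrak{h}_i)]=0$, so $Z(\mathfrak{h}_i)\subseteq Z(L)=0$. Given a relation $\sum_i x_i=0$ with $x_i\in\mathfrak{h}_i$, bracketing against any $h\in\mathfrak{h}_j$ yields $[h,x_j]=0$ (the other terms vanish by cross-bracket), so $x_j\in Z(\mathfrak{h}_j)=0$, making the sum direct.

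With $L=\bigoplus_{i=1}^m g_i(\mathfrak{h})$ in hand, simplicity of $\mathfrak{h}$ as a Lie algebra in Vect follows: a proper Lie ideal $J\subsetneq \mathfrak{h}$ would, using $[\mathfrak{h}_1,\mathfrak{h}_j]=0$ for $j>1$, define a Lie ideal of $L$ strictly inside $\mathfrak{h}_1$, contradicting the minimality of $\mathfrak{h}_1$ in $L$. The resulting decomposition realizes $L$ as $\Fun_G(S_N,\mathfrak{h})=\Ind_G^{S_N}(\mathfrak{h})$ with $G$ acting on $\mathfrak{h}$ by Lie algebra automorphisms. For the converse, I would check that the ideals of a direct product of simple non-abelian Lie algebras are direct sums of components (centerlessness rules out diagonal ideal embeddings), so the $S_N$-invariant Lie ideals of $\Fun_G(S_N,\mathfrak{h})$ correspond to $S_N$-invariant subsets of $G\backslash S_N$, forcing simplicity. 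Uniqueness of $G$ up to $S_N$-conjugacy comes from identifying it as the stabilizer of a minimal Lie ideal, and uniqueness of the action on $\mathfrak{h}$ up to conjugation in $\Aut(\mathfrak{h})$ follows by tracking the induced action on the minimal component under any algebra isomorphism.

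The main obstacle is handling centers cleanly: both the direct-sum step and the converse (ideals of a product being sums of factor ideals) require the simple Lie algebras involved to be centerless, so the abelian case must be carved off at the outset and treated as a special situation where $\mathfrak{h}$ is one-dimensional.
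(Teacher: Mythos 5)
The paper does not prove this statement; it is quoted from \cite{etingof2017semisimple} by transporting the associative case (Theorem 3.0.1) to the Lie setting. So there is no in-paper proof to compare against. Your orbit-decomposition argument is the natural one and is essentially correct: taking a minimal Lie ideal $\mathfrak{h}$ of $L$ as a plain Lie algebra, letting $G$ be its stabilizer, observing that the orbit sum $\sum_i g_i(\mathfrak{h})$ is a nonzero $S_N$-invariant Lie ideal hence all of $L$, using $\mathfrak{h}_i\cap\mathfrak{h}_j=0$ to get vanishing cross-brackets, using centerlessness of $L$ to force the sum to be direct, and deducing simplicity of $\mathfrak{h}$ from minimality of $\mathfrak{h}_1$ inside $L$ — all of these steps check out, as does the converse via the description of ideals of a direct sum of non-abelian simple Lie algebras, and the uniqueness statements.

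One inaccuracy worth flagging is your treatment of the abelian case. You claim it ``reduces to classifying irreducible $S_N$-submodules and falls under the same formalism with $\mathfrak{h}$ one-dimensional,'' but this is false: an irreducible $S_N$-module such as $X((N-1,1))$ with zero bracket is not of the form $\Fun_G(S_N,\mathfrak{h})$ for any one-dimensional $\mathfrak{h}$, since its dimension $N-1$ is not the index of any subgroup of $S_N$ for large $N$ (cf.\ Theorem 2.1.3). The correct resolution is that ``simple Lie algebra'' here means, by standard convention, a Lie algebra with nonzero bracket and no proper nonzero (here $S_N$-stable) ideals, so the abelian case is excluded by definition and needs no separate argument; with that convention your non-abelian argument is the whole proof. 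Finally, the paper's statement contains a typo inherited from the associative version: the action of $G$ is determined up to conjugation in $\Aut(\mathfrak{h})$, not $\Aut(\Mat_m(k))$; your proposal correctly works with $\Aut(\mathfrak{h})$.
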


We can now state the following Proposition:
\begin{prop}
Any simple Lie algebra $\mathfrak{g} \in \Rep(S_t)$ is equal to $Ind_{S_{t-j} \times H_j}^{S_{t}} \mathfrak{h}$, for some $j$ and $H \subset S_j$, where $\mathfrak{h}$ is a simple Lie algebra given by the ultraproduct of simple Lie algebras $\mathfrak{h}_n \in \mathcal C_n$ which remain simple under the forgetful functor $Res: \mathcal C_n \to Vec$.
\end{prop}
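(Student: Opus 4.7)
My plan is to reduce the statement to the positive-characteristic classification (Theorem 4.0.2) via the ultraproduct description of $\Rep(S_t)$, and then apply Lemma 2.2.1 to extract the induction structure. This parallels closely the argument used for associative algebras in the proof of Theorem \ref{assoc}.

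First, since $\mathfrak{g}$ is an object of $\Rep(S_t)$, I would write $\mathfrak{g} = \prod_{\mathcal{F}} \mathfrak{g}_n$ for some sequence $\mathfrak{g}_n \in \mathcal{C}_n$. By Łoś's theorem applied to the bilinearity, antisymmetry and Jacobi identity axioms, the bracket on $\mathfrak{g}$ arises as the ultraproduct of brackets making almost all $\mathfrak{g}_n$ into Lie algebras in $\mathcal{C}_n$. Simplicity of $\mathfrak{g}$ should force simplicity of almost all $\mathfrak{g}_n$ as $S_{t_n}$-equivariant Lie algebras: any proper non-zero invariant Lie ideal $I_n \subset \mathfrak{g}_n$ appearing for almost all $n$ would assemble (via the ultraproduct of the inclusions $I_n \hookrightarrow \mathfrak{g}_n$) into a proper non-zero sub-Lie-ideal of $\mathfrak{g}$ in $\Rep(S_t)$, after checking that the resulting ultraproduct object still lies in the Deligne category (by the same dimension-boundedness argument used throughout Section 2.2).

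Second, I would invoke Theorem 4.0.2 on each simple $\mathfrak{g}_n$: we get $G_n \subset S_{t_n}$ and a finite-dimensional Lie algebra $\mathfrak{h}_n$ over $\overline{\mathbb{F}}_{p_n}$, simple as a plain Lie algebra (i.e., simple under $\Res:\mathcal{C}_n \to \mathit{Vec}$), equipped with a $G_n$-action by Lie-algebra automorphisms, such that $\mathfrak{g}_n \cong \Ind_{G_n}^{S_{t_n}}(\mathfrak{h}_n)$. Third, I would apply Lemma 2.2.1 to this presentation: since $\mathfrak{g} = \prod_{\mathcal F}\Ind_{G_n}^{S_{t_n}}(\mathfrak{h}_n)$ is an object of $\Rep(S_t)$, the lemma yields, for almost all $n$, a \emph{fixed} $j \in \mathbb{Z}_{\ge 0}$ and $H \subset S_j$ with $G_n = S_{t_n-j}\times H$, and guarantees that $\mathfrak{h} := \prod_{\mathcal{F}} \mathfrak{h}_n$ is a well-defined object of $\Rep(S_{t-j}) \boxtimes \Rep(H)$ satisfying $\mathfrak{g} = \Ind_{S_{t-j}\times H}^{S_t}(\mathfrak{h})$. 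The Lie brackets on the $\mathfrak{h}_n$ assemble via Łoś into a Lie bracket on $\mathfrak{h}$, and the defining property that each $\mathfrak{h}_n$ is simple under the forgetful functor is exactly the content of Theorem 4.0.2, so it comes for free.

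The step I expect to require the most care is the first: verifying that simplicity in the Deligne category transfers to simplicity of almost all $\mathfrak{g}_n$ as equivariant Lie algebras in $\mathcal{C}_n$. Once this is in hand, the second and third steps are essentially a direct transcription of the associative-algebra argument, using Theorem 4.0.2 in place of Theorem 3.0.1; in particular, no analogue of Lemma 2.2.3 or Lemma 2.2.6 is needed at this stage because we are not yet trying to identify $\mathfrak{h}$ with any specific algebra (e.g., $\mathfrak{psl}(V)$ or a classical Lie algebra), merely to exhibit the induction structure. The finer identification of $\mathfrak{h}$ as a classical, Cartan-type, or $\mathfrak{psl}$-type algebra will be the business of the subsequent subsections.
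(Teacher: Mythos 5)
Your proposal is correct and follows essentially the same route as the paper: the paper's proof is exactly the combination of writing $\mathfrak{g}=\prod_{\mathcal F}\mathfrak{g}_n$ with almost all $\mathfrak{g}_n$ simple (via Łoś, as set up for associative algebras in Section 3), invoking Theorem 4.0.2 to get $\mathfrak{g}_n=\Ind_{G_n}^{S_{t_n}}(\mathfrak{h}_n)$ with $\mathfrak{h}_n$ simple as a plain Lie algebra, and then applying Lemma 2.2.1 together with Łoś's theorem to pin down $G_n=S_{t_n-j}\times H$ and to see that $\mathfrak{h}=\prod_{\mathcal F}\mathfrak{h}_n$ lies in $\Rep(S_{t-j})\boxtimes\Rep(H)$. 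Your extra care on the transfer of simplicity (assembling invariant ideals $I_n$ into a subobject of $\mathfrak{g}$, which automatically lies in the semisimple category $\Rep(S_t)$) is a detail the paper leaves implicit, not a divergence in method.
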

\begin{proof}
Since we know that $\mathfrak{g} = \prod_{\mathcal F}\mathfrak{g}_n$ and $\mathfrak{g}_n = \Ind^{S_{t_n}}_{G_n}(\mathfrak{h}_n)$, where $\mathfrak{h}_n$ is simple lie algebra as an object of the category of vector spaces, the result follows from Lemma 2.2.1 and \lthm.
\end{proof}

\subsection{Ultraproducts of Lie algebras of Cartan type}

We want to  rule out the case of almost all $\mathfrak{h}_n$ being of Cartan type.

To do this let's first explain what $W(m, \underline{n})$ actually is (see chapter 4.2 in \cite{strade2004simple} for details). First, we need to define $\mathcal O(m)$ and $\mathcal O(m,\underline{n})$.

\begin{def0}
By $\mathcal O (m)$ denote the commutative algebra over $\overline{\mathbb {F}}_q$ with basis $x_1^{(a_1)}\dots x_m^{(a_m)}$, for $a_i \in \mathbb Z_{\ge 0}$ with multiplication defined by:
$$
x_1^{(a_1)}\dots x_m^{(a_m)}\cdot x_1^{(b_1)}\dots x_m^{(b_m)} = {a_1+b_1 \choose a_1} \dots {a_m+b_m \choose a_m}x_1^{(a_1+b_1)}\dots x_m^{(a_m+b_m)}\ .
$$
By $\mathcal O(m,\underline{n})$ denote the subalgebra of $\mathcal O(m)$ spanned by $x_1^{(a_1)}\dots x_m^{(a_m)}$ with $0 \le a_i < p^{n_i}$.
\end{def0}

Using this, the Witt algebra $W(m,\underline{n})$ can be obtained in the following way.

\begin{def0}
 By $W(m,\underline{n})$ denote the simple Lie algebra given as follows:
 $$
 W(m,\underline{n}) = \sum_{i=1}^m \mathcal O(m,\underline{n})\partial_i \ .
 $$
\end{def0}

All other simple algebras $S(m, \underline{n})$, $H(m,\underline{n})$ and $K(m,\underline{n})$ can be realized as a subalgebras in $W(m,\underline{n})$. We will need the important proposition about the automorphism groups of such algebras, see chapter 7.3 of \cite{strade2004simple}.

\begin{prop}
There is an isomorphism $\phi: \Aut_{\mathcal C}(\mathcal O(m,\underline{n})) \to \Aut(W(m,\underline{n}))$ from a certain subgroup $\Aut_{\mathcal C}(\mathcal O(m,\underline{n})) \subset \Aut(\mathcal O(m,\underline{n}))$, to the group of automorphisms of Witt algebra given by:
$$
\sigma \to (D \mapsto \sigma \circ D \circ \sigma^{-1}) \ .
$$
Moreover it restricts to give an isomorphism between $S(m,\underline{n})$, $H(m,\underline{n})$ and $K(m,\underline{n})$ and certain subgroups of $\Aut_{\mathcal C}(\mathcal O(m,\underline{n})$.
\end{prop}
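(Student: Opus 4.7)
This proposition is attributed to Strade's book (chapter 7.3), so the most efficient route would be to simply cite it; nonetheless here is how I would reconstruct the argument from scratch.

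The starting point is the observation that $W(m,\underline{n})$ is precisely the Lie algebra of special (divided-power-compatible) derivations of $\mathcal O(m,\underline{n})$. From this it is immediate that any algebra automorphism $\sigma \in \Aut_{\mathcal C}(\mathcal O(m,\underline{n}))$ acts on $W(m,\underline{n})$ by conjugation $D \mapsto \sigma D \sigma^{-1}$, and one checks this is a Lie algebra automorphism. The subgroup $\Aut_{\mathcal C}$ is carved out as those algebra automorphisms which respect the divided power structure; equivalently, those which send the standard maximal ideal $\mathfrak{m} = (x_1^{(1)}, \dots, x_m^{(1)})$ to itself and descend compatibly modulo the successive powers of $\mathfrak m$. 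Injectivity of $\phi$ is then easy: if $\sigma$ acts trivially by conjugation on every partial derivative $\partial_i$, then $\sigma$ commutes with all derivations, hence fixes the coordinate functions up to constants, and the compatibility with divided powers forces $\sigma = \Id$.

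The main work is surjectivity of $\phi$. The plan is to show that any Lie automorphism $\Phi$ of $W(m,\underline{n})$ preserves the canonical (standard) filtration $W = W_{-1} \supset W_0 \supset W_1 \supset \dots$ where $W_i$ is spanned by monomials $x^{(\alpha)}\partial_j$ with $|\alpha|\ge i+1$. This filtration is intrinsic to the Lie algebra structure: $W_0$ is the unique subalgebra of the appropriate codimension with prescribed representation-theoretic properties on $W/W_0$, and the higher $W_i$ are then determined inductively as $\{D \in W_{i-1} : [D, W_{-1}] \subset W_{i-1}\}$. Once $\Phi$ is shown to preserve this filtration, one passes to the associated graded, which recovers $\mathcal O(m,\underline{n})$ (up to divided-power structure) as $\mathrm{gr}\, W_0$ acting on $\mathrm{gr}\,(W/W_0)$; the divided-power data is then recovered from the way $W$ is a nontrivial extension of its graded pieces. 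This reconstruction yields the algebra automorphism $\sigma$ with $\phi(\sigma) = \Phi$.

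For the restriction statement concerning $S, H, K$, the key point is that each of these subalgebras is cut out inside $W(m,\underline{n})$ as the kernel (or stabilizer) of a canonical action on differential forms: $S$ preserves the volume form, $H$ the symplectic $2$-form, $K$ the contact $1$-form, all built from $\mathcal O(m,\underline{n})$. Hence an automorphism $\Phi$ of $W$ preserves $S$ (resp.\ $H$, $K$) iff the corresponding $\sigma \in \Aut_{\mathcal C}(\mathcal O(m,\underline{n}))$ preserves the relevant form up to scalar, which defines the stated subgroups. The main obstacle I anticipate is the filtration-preservation argument: showing that the standard filtration is characterized by purely Lie-algebraic data requires a careful case-by-case analysis depending on $(m,\underline{n})$ and $p$, which is exactly the content of Strade's treatment and is the reason a direct citation is preferable here.
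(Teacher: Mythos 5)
The paper itself offers no proof of this proposition — it is stated as a known result with a reference to Chapter 7.3 of Strade's book, so your instinct that a direct citation is the right move matches the paper exactly. Your reconstructed sketch (conjugation action on special derivations, injectivity from the faithful action on divided-power generators, surjectivity via preservation of the standard filtration and passage to the associated graded, and identification of $\Aut(S)$, $\Aut(H)$, $\Aut(K)$ with stabilizers of the volume, symplectic, and contact forms in $\Aut_{\mathcal C}(\mathcal O(m,\underline{n}))$) is a faithful outline of the argument as it appears in Strade and Wilson, and correctly flags the filtration-preservation step as the real work.
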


So from this proposition it follows what we need to understand the structure of the group $\Aut_{\mathcal C}(\mathcal O(m,\underline{n}))$. This is done in \cite{wilson1971classification}. See Corollary 1 and Theorem 2.
\begin{prop}
Take any isomorphism $\sigma \in \Aut_{\mathcal C}(\mathcal O(m,\underline{n}))$, denote by $y_i$ the images of $x_i = x_i^{(1)}$ under this morphism. By $\bar{y}_i$ denote the linear part of $y_i$. It follows that the map $x_i \mapsto \bar{y}_i$ defines an element of $GL(m,\overline{\mathbb F}_p)$.
Also there is an exact sequence:
$$
0 \to \mathcal B \to \Aut_{\mathcal C}(\mathcal O(m,\underline{n})) \to GL(m,\overline{\mathbb F}_p) \ .
$$
Where $\mathcal B$ is solvable and the last morphism is as described above.
\end{prop}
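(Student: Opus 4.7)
My approach is to exploit the local algebra structure of $\mathcal O(m,\underline{n})$. Since every non-scalar basis element $x_1^{(a_1)}\cdots x_m^{(a_m)}$ is nilpotent, $\mathcal O(m,\underline{n})$ is a finite-dimensional local commutative $\overline{\mathbb F}_p$-algebra whose unique maximal ideal $\mathfrak m$ is the augmentation ideal, with residue field $\overline{\mathbb F}_p$. Consequently any algebra automorphism $\sigma$ preserves $\mathfrak m$ and induces a $\overline{\mathbb F}_p$-linear automorphism of the $m$-dimensional vector space $\mathfrak m/\mathfrak m^2$, canonically spanned by the classes $[x_1], \dots, [x_m]$. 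By definition of the linear part, $[y_i] = [\bar y_i]$ in this quotient, so the matrix $(x_i \mapsto \bar y_i)$ is invertible; this yields the asserted group homomorphism $\Aut_{\mathcal C}(\mathcal O(m,\underline{n})) \to GL(m, \overline{\mathbb F}_p)$, functoriality being immediate from $(\sigma \tau)(x_i) = \sigma(\tau(x_i))$ and passing to linear parts.

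For the solvability of the kernel $\mathcal B$, I would filter it by $\mathfrak m$-adic depth: set
$$
\mathcal B_k := \{\sigma \in \mathcal B \mid \sigma(x_i) \equiv x_i \pmod{\mathfrak m^k} \text{ for all } i\},
$$
so $\mathcal B = \mathcal B_2 \supset \mathcal B_3 \supset \cdots$, and the filtration terminates since $\mathfrak m$ is nilpotent. Using that $\sigma \in \Aut_{\mathcal C}$ respects the divided power structure, one has $\sigma(x_i^{(a)}) = (x_i + z_i)^{(a)} = \sum_{b+c=a} x_i^{(b)} z_i^{(c)}$ whenever $\sigma(x_i) = x_i + z_i$ with $z_i \in \mathfrak m^k$; since $z_i^{(c)} \in \mathfrak m^{kc}$ for $c \geq 1$, each correction term lies in $\mathfrak m^{b+kc} \subseteq \mathfrak m^{a+k-1}$. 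A short induction on monomial degree then shows $\sigma(u) \equiv u \pmod{\mathfrak m^{|u|+k-1}}$ for any monomial $u \in \mathfrak m$. In particular, if $\sigma(x_i) = x_i + z_i$ and $\tau(x_i) = x_i + w_i$ with $z_i, w_i \in \mathfrak m^k$ and $k \geq 2$, then $\sigma(w_i) \equiv w_i \pmod{\mathfrak m^{2k-1}} \subseteq \mathfrak m^{k+1}$, whence $(\sigma\tau)(x_i) \equiv x_i + z_i + w_i \equiv (\tau\sigma)(x_i) \pmod{\mathfrak m^{k+1}}$. Thus each $\mathcal B_k / \mathcal B_{k+1}$ is abelian, giving $\mathcal B$ a composition series with abelian quotients, hence solvable.

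The main delicate point I would need to handle carefully is the condition $\mathcal C$ itself, which in \cite{wilson1971classification} singles out the automorphisms compatible with the divided power structure rather than arbitrary algebra automorphisms. This compatibility is precisely what powers the formula $\sigma(x_i^{(a)}) = (x_i + z_i)^{(a)}$ driving the correction-term estimates above; without it, arbitrary algebra automorphisms need not behave predictably on $x_i^{(a)}$ for $a \geq p$, and the $\mathfrak m$-adic filtration argument becomes more subtle. Since the proposition does not claim surjectivity of the map onto $GL(m, \overline{\mathbb F}_p)$ (only exactness on the left), I would not need to analyze the image, and the argument above plus the divided-power formula suffice.
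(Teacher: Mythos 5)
The paper does not prove this proposition itself; it defers entirely to Corollary 1 and Theorem 2 of \cite{wilson1971classification}, so there is no internal proof to compare against. Your reconstruction is along the standard lines and the $\mathfrak m/\mathfrak m^2$ reduction for the linear-part map is clean and correct. For solvability of $\mathcal B$, the unipotent-filtration strategy is the right one, but one technical point deserves to be fixed rather than glossed over: $\mathcal O(m,\underline{n})$ is a \emph{subalgebra} of $\mathcal O(m)$, not a quotient, and the filtration by actual powers of $\mathfrak m$ does not coincide with the filtration by monomial degree. Indeed $x_1\cdot x_1^{(p-1)} = \binom{p}{1}x_1^{(p)} = 0$ in characteristic $p$, so $x_1^{(p)}$ is a degree-$p$ element that is \emph{not} in $\mathfrak m^p$ (it is a fresh algebra generator, lying in $\mathfrak m\setminus\mathfrak m^2$). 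Consequently your estimates ``$z_i^{(c)} \in \mathfrak m^{kc}$'' and ``correction term $\in \mathfrak m^{|u|+k-1}$'' are really assertions about the degree filtration, not about powers of $\mathfrak m$. The argument survives if you replace $\{\mathfrak m^k\}$ throughout by the filtration of $\mathcal O(m,\underline n)$ by ideals $I_k = $ span of monomials of degree $\geq k$: this is still a terminating filtration by ideals stable under every filtration-preserving automorphism, and a homomorphism $\mathcal B_k \to (I_k/I_{k+1})^m$ with kernel $\mathcal B_{k+1}$ is all that is needed for abelian subquotients and hence solvability. With that substitution, and granting the divided-power compatibility enforced by the $\mathcal C$-condition that you correctly identify as the hypothesis making $\sigma(x_i^{(a)}) = \sigma(x_i)^{(a)}$ hold, the proof is sound.
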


Since the automorphism groups of all Cartan type Lie algebras are soubgroups in $\Aut_{\mathcal C}(\mathcal O(m,\underline{n}))$\footnote{Theorem 7.3.2 \cite{strade2004simple}.} it follows that such an exact sequence holds for any $\Aut(X(m,\underline{n}))$ with $X = W,S,H,K$. Namely we have:
\begin{equation}
    0 \to \mathcal{B_X} \to \Aut(X(m,\underline{n})) \to  GL(m,\overline{\mathbb F}_p) \ ,
\end{equation}
for different $\mathcal{B_X}$.

Also we will need to know the dimension formulas for Cartan type Lie algebras, they are summarized in the folowing proposition (see \cite{strade2004simple} section 4.2).
\begin{prop}
The dimension of Cartan type algebras are given by the formulas $\dim(W(m,\underline{n})) =mp^{\sum n_i}$, $\dim(S(m,\underline{n})) = (m-1)(p^{\sum n_i}-1)$, $\dim(H(m,\underline{n})) = p^{\sum n_i}-2$ and $\dim(K(m,\underline{n}))= p^{\sum n_i}$ or $p^{\sum n_i}-1$ depending on $m \ mod \ p$.
\end{prop}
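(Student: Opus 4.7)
The plan is to compute each of the four dimensions directly from the explicit definitions, following Strade's Chapter 4.2 essentially verbatim; nothing in this proposition is original, it is just collected here so that it can be fed into the ultraproduct arguments of the next subsection.

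First I would pin down the dimension of the divided power algebra $\mathcal{O}(m,\underline{n})$ from its basis in Definition 4.2.1. The basis is indexed by tuples $(a_1,\ldots,a_m)$ with $0 \le a_i < p^{n_i}$, so $\dim \mathcal{O}(m,\underline{n}) = \prod_{i=1}^m p^{n_i} = p^{\sum n_i}$. Then $W(m,\underline{n}) = \bigoplus_{i=1}^m \mathcal{O}(m,\underline{n})\partial_i$ is a free module on $\partial_1,\ldots,\partial_m$, so $\dim W(m,\underline{n}) = m\,p^{\sum n_i}$ is immediate. This disposes of the first formula with essentially no work.

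For the other three series, the strategy is to realize each algebra as the kernel/image of an explicit linear operator on $W(m,\underline{n})$ (divergence for $S$, symplectic pairing for $H$, contact form for $K$) and read off the dimension from a short exact sequence. For $S$, one uses that $S(m,\underline{n})$ is the derived subalgebra of the divergence-free vector fields; the divergence map $W(m,\underline{n}) \to \mathcal{O}(m,\underline{n})$ has appropriate cokernel, and passing to the commutator trims one summand in each variable direction, producing the count $(m-1)(p^{\sum n_i}-1)$. For $H(m,\underline{n})$, the Hamiltonian map $f \mapsto X_f$ identifies the algebra with $\mathcal{O}(m,\underline{n})$ modulo constants modulo the top divided monomial (giving the two units that are subtracted off), yielding $p^{\sum n_i}-2$. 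For $K(m,\underline{n})$, the contact construction gives a nearly surjective map from $\mathcal{O}(m,\underline{n})$ onto $K(m,\underline{n})$, and the $1$-dimensional discrepancy appears precisely when the top divided monomial lies in the kernel, a condition governed by whether $p \mid m+3$, producing the advertised $p^{\sum n_i}$ versus $p^{\sum n_i}-1$ dichotomy.

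The only mildly delicate point is the bookkeeping of these small corrections (the $-1$, $-2$, and the $m \bmod p$ case split), but since all four algebras are given by completely explicit generators and relations over $\mathcal{O}(m,\underline{n})$, this is a direct calculation whose outcome is already tabulated in Strade, and I would simply invoke that reference rather than redo the linear algebra here.
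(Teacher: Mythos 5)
Your proposal is essentially the paper's approach: the paper states these formulas with no proof at all, pointing to Strade Section~4.2, and your plan likewise ends by deferring to Strade for the delicate bookkeeping in the $S$, $H$, $K$ cases. The one genuine addition you make is spelling out the $\dim W$ computation and the precise dichotomy $p \mid m+3$ for $K(m,\underline{n})$, which the paper states only loosely as ``depending on $m \bmod p$''.
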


Now we have everything we need to move on. So let's prove the following proposition.

\begin{prop}\label{nocartan}
In Proposition 4.0.3 almost all $\mathfrak{h}_n$ are of classical type.
\end{prop}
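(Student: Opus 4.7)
The plan is to argue by contradiction. Suppose that for a set of $n$ belonging to $\mathcal F$, $\mathfrak{h}_n$ is of Cartan type $X(m_n,\underline{n}_n)$ with $X\in\{W,S,H,K\}$. We will derive a contradiction with the requirement that $\mathfrak{h}=\prod_{\mathcal F}\mathfrak{h}_n$ is an object of $\Rep(S_{t-j})\boxtimes \Rep(H)$.

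First I would bound the parameters $m_n$ and the sum $\sigma_n$ of entries of $\underline n_n$. Since $\mathfrak h$ lies in the Deligne category, there exist constants $M,L$ such that $\dim\mathfrak{h}_n\le M t_n^L$ for almost all $n$. By Proposition 4.1.4, $\dim \mathfrak{h}_n \ge p_n^{\sigma_n}-2$, and since $p_n>t_n-j$ this forces $\sigma_n\le L$ for almost all $n$; as each entry of $\underline n_n$ is at least one, $m_n\le\sigma_n$ is also bounded. Only finitely many tuples $(X,m,\underline n)$ satisfy these bounds, so \lthm lets us fix $X$, $m$, and $\underline n$ for almost all $n$.

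The main technical step is to show that $\mathcal A_{t_n-j}$ acts trivially on $\mathfrak{h}_n$ for almost all $n$. Proposition 4.1.3 provides an exact sequence $0\to \mathcal B_X \to \Aut(X(m,\underline n)) \to GL(m,\overline{\mathbb F}_{p_n})$ with $\mathcal B_X$ solvable, so the action $S_{t_n-j}\to \Aut(\mathfrak{h}_n)$ projects to an $m$-dimensional representation of $S_{t_n-j}$ over $\overline{\mathbb F}_{p_n}$. Because $p_n>t_n-j$ this representation category is semisimple with the familiar classification by Young diagrams, and every nontrivial irreducible representation of $\mathcal A_{t_n-j}$ has dimension at least $t_n-j-2$. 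Since $m$ is now fixed while $t_n\to\infty$, this dimension exceeds $m$ for almost all $n$, so $\mathcal A_{t_n-j}$ acts trivially on $\overline{\mathbb F}_{p_n}^m$. Its image in $\Aut(\mathfrak{h}_n)$ then lies in the solvable subgroup $\mathcal B_X$, and as $\mathcal A_{t_n-j}$ is simple and non-abelian (for $t_n-j\ge 5$), the map is trivial. I expect this to be the main obstacle, since it ties together the automorphism-group structure of Cartan-type Lie algebras with the representation theory of alternating groups in good characteristic.

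From there the contradiction follows quickly. Because $\mathcal A_{t_n-j}$ acts trivially on $\mathfrak{h}_n$, the $S_{t_n-j}$-action factors through $\mathbb Z/2$, so each irreducible $S_{t_n-j}$-summand of $\mathfrak{h}_n$ is either trivial or sign. By Remark 1.4.3 the sign representation cannot appear for almost all $n$, so $\mathfrak{h}_n$ has trivial $S_{t_n-j}$-action, placing $\mathfrak h$ in $\mathbf 1\boxtimes\Rep(H)$. This forces $\dim \mathfrak{h}_n$ to be uniformly bounded for almost all $n$, contradicting the estimate $\dim \mathfrak{h}_n\ge p_n-2\to\infty$ from Proposition 4.1.4. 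Hence almost all $\mathfrak{h}_n$ must be of classical type.
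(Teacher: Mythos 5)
Your proof is correct and uses the same underlying tools as the paper (the exact sequence bounding $\Aut$ of a Cartan-type algebra, the dimension formula from Proposition 4.1.4, the lower bound on dimensions of nontrivial irreducibles of $\mathcal A_N$, and the polynomial-dimension/bounded-length constraint imposed by lying in the Deligne category), but it is organized a bit differently. The paper does not bound the rank parameter $m_n$ in advance; instead it splits into two cases according to whether the composite $S_{t_n-j}\to\Aut(\mathfrak h_n)\to GL(m_n,\overline{\mathbb F}_{p_n})$ is trivial. In the nontrivial case the paper argues that $m_n\ge t_n-j-1$ (lowest nontrivial $S_{t_n-j}$-irrep), so $\dim \mathfrak h_n\ge p_n^{\,t_n-j-1}-3$ grows exponentially; in the trivial case it reduces to the solvable subgroup $\mathcal B_X$, whence the action factors through $\mathbb Z/2$ and the unbounded-length contradiction follows. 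Your approach uses the dimension formula at the start to force $\sum n_i\le L$ and hence $m_n$ bounded, which preempts the paper's second case entirely; you then carry out only the analogue of the first case. This is a mild simplification and buys you a single-branch argument at the cost of invoking the dimension formula twice. One small refinement worth noting: it is cleaner to quote the bound $N-1$ for the smallest nontrivial irreducible of $\mathcal A_N$ (from the restriction of the standard representation of $S_N$), rather than $N-2$; either suffices once $t_n$ is large.
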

\begin{proof}
Suppose that almost all $\mathfrak h_n$ in Proposition 4.0.3 are of Cartan type. Let's denote $\mathfrak{h}_n = X_n(m_n, \underline{N_n})$ ($X_n = W,S,H,K$), then we have a homomorphism $S_{t_n-j} \to \Aut(\mathfrak{h}_n)$. Hence, because of (2), we have $S_{t_n-j} \to GL(m,\overline{\mathbb F}_{p_n})$. There are two possibilities here. Either for almost all $n$ this homomorphism is trivial or not. 

First suppose it is trivial for almost all $n$. When for almost all $n$, $S_{t_n-j} \to \mathcal B_n$, but since the latter group is solvable, it follows that for almost all $n$ the kernel of this morphism contains $\mathcal A_n$. But then $\mathfrak{h}_n$ contains only one-dimensional representations of $S_{t_n-j}$. But since the dimension of $\mathfrak{h}_n$ is bigger than $p_n-3$ it follows that the length of $\mathfrak{h}_n$ as a representation of $S_{t_n-j}$ is unbounded, hence its ultraproduct does not define an object of the Deligne category.

The only other option is that this morphism is non-trivial for almost all $n$. Note that this morphism cannot have $\mathcal{A}_n$ as its kernel or the previous argument can be repeated. Hence, since the lowest dimension of $S_{t_n-j}$-representation which is not trivial or sign  is $t_n-j-1$, it follows that $m_n \ge t_n-j-1$, and thus the dimension of $\mathfrak{h}_n$ is bigger or equal then $p_n^{t_n-j-1}-3$. So it grows exponentially. But the dimension of any sequence of representations defining an element of the Deligne category grows polynomialy. So again ultraproduct of $\mathfrak{h}_n$ does not belong to $\Rep(S_{t-j})$.

Thus the  result follows.
\end{proof}

\subsection{Ultraproducts of classical Lie algebras}

So, now we can assume that for almost all $\mathfrak{h}_n$ are of classical type. 

Here we again have two possibilities. Either the size of the Dynkin diagram coresponding to $\mathfrak{h}_n$ is bounded for almost all $n$ or it is not.
Let's start with  the first case.
\begin{prop}
If the size of Dynkin diagram corresponding to  $\mathfrak{h}_n$ is bounded, then $\mathfrak{h}$ has trivial action of $S_{t-j}$.
\end{prop}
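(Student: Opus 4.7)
My plan is to exploit the fact that a bounded-size Dynkin diagram forces $\dim\mathfrak h_n$ to be bounded, and then to combine the low-dimensional-representation bound for $S_m$ with the Deligne-category constraint (the remark following Corollary 1.4.2) that rules out the sign representation.

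First I would observe that there are only finitely many Dynkin diagrams of any fixed bounded size, so for almost all $n$ the Dynkin diagram of $\mathfrak h_n$ is a fixed diagram $C$, and hence $\dim \mathfrak h_n$ equals some constant $d$ for almost all $n$. Note that the modular exception $\mathfrak{psl}_{kp}$ is automatically excluded in this regime, because its Dynkin diagram $A_{kp_n-1}$ has size at least $p_n-1$, which is unbounded. The automorphism action therefore realizes $\mathfrak h_n$ as a $d$-dimensional linear representation of $S_{t_n-j}$ over $\overline{\mathbb F}_{p_n}$ via $S_{t_n-j}\to \Aut(\mathfrak h_n)\hookrightarrow GL(\mathfrak h_n)$, and since $p_n>t_n>t_n-j$, the representation theory of $S_{t_n-j}$ here agrees with that in characteristic zero.

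Next I would invoke the classical fact that for $m\ge 5$ every irreducible representation of $S_m$ other than the trivial and the sign has dimension at least $m-1$ (the standard representation being the minimum). Consequently, for almost all $n$ (those with $t_n-j-1>d$), every irreducible $S_{t_n-j}$-constituent of $\mathfrak h_n$ is either trivial or sign, which is to say $\mathcal A_{t_n-j}$ acts trivially on $\mathfrak h_n$.

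Finally, the remark following Corollary 1.4.2 tells us that the sign representation cannot appear in $V_n$ for almost all $n$ whenever $\prod_{\mathcal F}V_n$ is an object of $\Rep(S_{t-j})$. Applying this to $\mathfrak h = \prod_{\mathcal F}\mathfrak h_n$ eliminates the sign summands, leaving $\mathfrak h_n$ a trivial $S_{t_n-j}$-representation for almost all $n$; the claim that $S_{t-j}$ acts trivially on $\mathfrak h$ then follows from Łoś's theorem. I expect no substantive obstacle here: the only place any care is needed is in confirming that ``classical type of bounded Dynkin rank'' really pins down $\dim\mathfrak h_n$, but this is immediate from the polynomial growth of the dimensions of the classical and exceptional simple Lie algebras with their rank.
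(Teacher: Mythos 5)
Your proof is correct and follows essentially the same route as the paper's: bounded Dynkin diagram forces a fixed diagram and hence a bounded-dimensional ambient $GL$, the minimal-dimension bound for nontrivial irreducibles of $S_m$ forces the action to factor through one-dimensional constituents, and the remark following Corollary 1.4.2 eliminates the sign representation. The extra remarks you add (explicit exclusion of the $\mathfrak{psl}_{kp}$ case, comparison with characteristic-zero representation theory when $p_n>t_n$) are harmless clarifications of points the paper leaves implicit.
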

\begin{proof}
 Since there are finite number of Dynkin diagrams of bounded size, it follows that for almost all $n$ the corresponding Dynkin diagram is the same, so as a Lie algebra in the category of vector spaces, $\mathfrak{h}_n$ is of the same type. But then its automorphism group is a subgroup in some $GL_N(\overline{\mathbb F}_{p_n})$. So since the lowest dimension of irreducible $S_{t_n-j}$-representation which is not trivial or sign is $t_n-j-1$ it follows that almost all $\mathfrak{h}_n$ are sums of one-dimensional representations of $S_{t_n-j}$. But it cannot contain the sign representations  for almost all $n$, or the ultraproduct wouldn't lie in $\Rep(S_{t-j})$. Hence for almost all $n$, $\mathfrak{h}_n$ is a trivial representation of $S_{t_n-j}$. Thus the coresponding ultra-product is the same classical Lie algebra corresponding to the Dynkin diagram with a trivial action of $S_t$, i.e. equal to the sum of unit objects in $\Rep(S_{t-j})$.
\end{proof}

In the second case the size of the Dynkin diagram is unbounded. But the number of infinite series of Dynkin diagrams is finite, so we may assume that for almost all $n$ the type of the Dynkin diagram is the same, and it is either $A,B,C$ or $D$. To proceed further we need to know something about the automorphism groups of these algebras. This information can be found in \cite{seligman1960automorphisms}, it is summarized in the following Proposition.

\begin{prop}
The group of automorphisms of the Lie algebra of type $A_{n-1}$ (both in the case $p | n$ and $p \nmid n$) is the semi-direct product of $PSL(n)$ by $\mathbb Z/ 2\mathbb Z$, where the second group acts by $X \mapsto -X^t$. We will denote the generator of this group by $\tau$.

The group of automorphisms of the Lie algebra of type $B_{n}$ is $PSp(n)$.

The group of automorphisms of the  Lie algebra of type $C_{n}$ is $PSO(2n+1)$.

The group of automorphisms of the Lie algebra of type $D_{n}$ is $PO(2n)$, for $n>4$.
\end{prop}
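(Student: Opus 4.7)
The plan is to follow the standard strategy for identifying automorphism groups of classical simple Lie algebras, as carried out in detail in \cite{seligman1960automorphisms}, adapted to the characteristic $p > 5$ setting. Given an automorphism $\phi \in \Aut(\mathfrak g)$, I would first fix a Cartan subalgebra $\mathfrak h \subset \mathfrak g$ and invoke conjugacy of Cartan subalgebras (which is retained for classical types in characteristic $p > 5$) to reduce, after composing with an inner automorphism, to the case $\phi(\mathfrak h) = \mathfrak h$. Then $\phi$ induces an automorphism of the root system $\Phi$, and after further composition with an element of the Weyl group (realized by inner automorphisms coming from the $\mathfrak{sl}_2$-triples attached to simple roots), we may assume $\phi$ preserves a chosen base of simple roots. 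The residual freedom is precisely a diagram automorphism of the Dynkin diagram.

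Next, for each type I would identify the inner automorphism group as the adjoint algebraic group acting on $\mathfrak g$, and the outer contribution as the Dynkin diagram automorphism group. Type $A_{n-1}$ has a $\mathbb Z/2\mathbb Z$ diagram automorphism, which in matrix form is the Chevalley involution $X \mapsto -X^t$; this gives the semi-direct product stated, with inner part $PSL(n)$ in either characteristic regime. Types $B_n$ and $C_n$ have Dynkin diagrams with no nontrivial automorphism, so the full automorphism group coincides with the adjoint group, which identifies with the stated classical matrix group modulo its center. Type $D_n$ with $n > 4$ has a $\mathbb Z/2\mathbb Z$ diagram automorphism swapping the two end nodes of the fork, realized in matrix form by conjugation by an element of $O(2n)$ of determinant $-1$, which is exactly the reason one obtains $PO(2n)$ rather than $PSO(2n)$; the case $n=4$ must be excluded because the exceptional triality automorphism enlarges the group by an $S_3$.

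The main obstacle will be verifying that the characteristic-zero arguments carry over cleanly to $p > 5$: specifically, that the Chevalley basis and its integral structure constants behave well modulo $p$, that the Weyl group is generated by reflections coming from simple $\mathfrak{sl}_2$-triples (which requires those triples to remain well-defined in characteristic $p$), and in type $A$ that one correctly handles the case $p \mid n$, where the simple algebra is $\mathfrak{psl}_n$ rather than $\mathfrak{sl}_n$ and one must pass between the algebra and its quotient by the one-dimensional center. All of these points are precisely what is checked in \cite{seligman1960automorphisms}, so ultimately the proof amounts to a careful invocation of the results there together with a translation of the adjoint groups into the explicit matrix group descriptions stated in the proposition.
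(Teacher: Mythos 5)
The paper does not actually prove this proposition --- it is presented as a direct summary of results from \cite{seligman1960automorphisms}, introduced by the sentence ``This information can be found in \cite{seligman1960automorphisms}, it is summarized in the following Proposition,'' with no further argument given. Your sketch via conjugacy of Cartan subalgebras, reduction modulo the Weyl group, and identification of the outer part with Dynkin diagram automorphisms is exactly the standard route carried out in that reference, so you are taking essentially the same approach (namely, deferring the characteristic-$p$ technicalities to Seligman), just writing out what the paper leaves implicit in the citation.
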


We have an additinal complication in the $A_{n-1}$ case, so let's first sort this out.

\begin{prop}
If $\mathfrak{h}_n$ is the simple Lie algebra of type $A$ for almost all $n$, then, for almost all $n$, $S_{t_n-j}$ maps into the subgroup $PSL(n)$ of automorphisms of $\mathfrak{h}_n$. 
\end{prop}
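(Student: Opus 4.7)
The plan is to argue by contradiction. Suppose that on a set $B\in\mathcal F$ the composition
$S_{t_n-j} \xrightarrow{\rho_n} \Aut(\mathfrak h_n) \simeq PSL(m_n)\rtimes \langle\tau\rangle \twoheadrightarrow \mathbb Z/2\mathbb Z$
is non-trivial. Since for $t_n-j$ sufficiently large the only non-trivial homomorphism from $S_{t_n-j}$ to $\mathbb Z/2\mathbb Z$ is the sign character, the preimage of $PSL(m_n)$ under $\rho_n$ is exactly $\mathcal A_{t_n-j}$, and so $\rho_n|_{\mathcal A_{t_n-j}}$ yields a projective linear representation $V_n$ of $\mathcal A_{t_n-j}$ of dimension $m_n$. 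Because $\mathfrak h = \prod_{\mathcal F}\mathfrak h_n$ lies in $\Rep(S_{t-j})$, its simple summands sit inside a bounded tensor power of $\mathcal X$, so as in the proof of Lemma 2.2.1 we have $\dim\mathfrak h_n\le Ct_n^L$ for constants $C,L$, forcing $m_n\le C't_n^{L/2}$. Applying Lemma 2.2.3 to $\mathcal A_{t_n-j}$ then upgrades $V_n$ to a genuine linear representation for almost all $n\in B$.

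Next, I extend $V_n$ to the full symmetric group. A direct computation inside $PSL(m_n)\rtimes\langle\tau\rangle$, using the defining relation $\tau A\tau^{-1}=A^{-T}$, shows that for any odd $\sigma\in S_{t_n-j}$ the outer-automorphism twist $V_n^\sigma$ (i.e., $V_n$ with its $\mathcal A_{t_n-j}$-action precomposed by conjugation by $\sigma$) is isomorphic to the dual $V_n^*$ as an $\mathcal A_{t_n-j}$-representation. Since $S_{t_n-j}$-irreducibles are self-dual---and hence so are their restrictions to $\mathcal A_{t_n-j}$, modulo a brief check in the self-conjugate case---we obtain $V_n\cong V_n^\sigma$, so $V_n$ admits a projective extension to a representation of $S_{t_n-j}$. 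A second application of Lemma 2.2.3, now to $S_{t_n-j}$, refines this to a linear extension $\tilde V_n$ of dimension $m_n$.

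The main obstacle, which I expect to be the hardest step, is the final extraction of a contradiction. The vector space $\mathfrak{sl}(V_n)$ now carries two distinct $S_{t_n-j}$-representation structures that agree on $\mathcal A_{t_n-j}$: the original one, in which odd $\sigma$ acts by $\tau\circ\operatorname{Ad}_{A_\sigma}$, and the natural $\operatorname{Ad}_{\tilde V_n}$-action coming from the linear extension, which ultraproduces to an object of $\Rep(S_{t-j})$ by Lemma 2.2.6. The plan is to compare the two structures' characters on odd permutations---the $\tau$-twisted character computes to $1-\operatorname{tr}(A_\sigma^T A_\sigma^{-1})$ on $\mathfrak{sl}(V_n)$, bounded in absolute value by $m_n+1$, whereas the $\operatorname{Ad}_{\tilde V_n}$-character equals $\chi_{\tilde V_n}(\sigma)^2-1$---and argue that the discrepancy forces the decomposition of the original $\mathfrak h_n$ as an $S_{t_n-j}$-representation to contain irreducibles of unbounded column length, equivalently sign-like tensor summands. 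This would directly contradict Remark 1.4.3 applied to the hypothesis that $\prod_{\mathcal F}\mathfrak h_n$ is an object of $\Rep(S_{t-j})$.
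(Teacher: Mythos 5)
Your opening moves track the paper's own proof: assuming the composite $S_{t_n-j}\to\Aut(\mathfrak h_n)\to\mathbb Z/2\mathbb Z$ is nontrivial, observing that $\mathcal A_{t_n-j}$ must land in $PSL(m_n)$, bounding $m_n$ polynomially and invoking Lemma 2.2.3 to linearize the projective action, and using self-duality of $V_n$ (the paper asserts this too, so your ``brief check in the self-conjugate case'' is on par with it). The genuine gap is your final step: the extraction of a contradiction is presented only as a ``plan'' and is not an argument. Since two $S_{t_n-j}$-structures agreeing on $\mathcal A_{t_n-j}$ differ by tensoring some irreducible constituents with sign, your target (sign-twisted, long-column constituents in $\mathfrak h_n$, contradicting the remark following Corollary 1.4.2) is the right one, but nothing in the proposal shows that the character discrepancy you point to actually forces such constituents to appear; that inference is precisely the content of the proposition. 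The supporting claims are also shaky as stated: over $\overline{\mathbb F}_{p_n}$ there is no absolute value, so the bound $\lvert 1-\Tr(A_\sigma^TA_\sigma^{-1})\rvert\le m_n+1$ has to be phrased via Brauer characters or lifting for $p_n$ large, and even then it is clear only for odd involutions (where $A_\sigma$ is forced to be symmetric or skew up to scalar), not for general odd classes; moreover the extension $\tilde V_n$ produced by Lemma 2.2.3 need not have constituents of bounded row length, so $\chi_{\tilde V_n}(\sigma)^2-1$ is not under control either. Finally, you never treat the degenerate case where $\mathcal A_{t_n-j}$ maps trivially into $PSL(m_n)$; the paper handles it separately by noting that then $\mathfrak h_n$ is a sum of trivial and sign representations and sign cannot occur in almost all $\mathfrak h_n$.

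For comparison, the paper closes the argument much more cheaply and never extends $V_n$ to $S_{t_n-j}$: once $V_n\cong V_n^*$ via an $\mathcal A_{t_n-j}$-invariant form $B$, the twisted transpose automorphism $X\mapsto -BX^tB^{-1}$ (which is minus the flip on $V_n\otimes V_n$) commutes with the whole image of $\mathcal A_{t_n-j}$ in $\Aut(\mathfrak h_n)$; from this the paper argues that the image of $S_{t_n-j}$, which is isomorphic to $S_{t_n-j}$ itself once $\mathcal A_{t_n-j}$ acts nontrivially, would have to be a direct product $\mathcal A_{t_n-j}\times\mathbb Z/2\mathbb Z$, which is impossible. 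If you want to salvage your route, the missing piece is exactly a proof that the original $S_{t_n-j}$-structure on $\mathfrak h_n$ cannot be isomorphic to one all of whose constituents have bounded column length; the paper's centralizer trick is the efficient substitute for that comparison.
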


\begin{proof}
Suppose the map $S_{t_n-j} \to \mathbb Z/2\mathbb Z$ obtained as a composition of maps $S_{t_n-j} \to \Aut(\mathfrak{h}_n)$ and $\Aut(\mathfrak{h}_n) \to \mathbb Z / 2 \mathbb Z$ is non-trivial for almost all $n$. Note that the map $\mathcal A_{t_n-j}$ obtained by restriction of this map is trivial, so $\mathcal A_{t_n-j}$ maps into $PSL(N_n)$. Suppose this map is non-trivial. By Lemma 2.2.3 it follows that this map gives us a linear representation of $\mathcal A_{t_n-j}$ on $V = \overline{\mathbb F_{p_n}}^{N_n}$. By choosing a bilinear $\mathcal A_{t_n-j}$-invariant form on $V$, we can suppose that $V^* \simeq V$ as a representation of $\mathcal A_{t_n-j}$. But then since $\tau$ acts on an element of $V\otimes V^*$ as  $\tau(v\otimes w) = -w \otimes v$, it follows that $\tau$ commutes with the action of $\mathcal A_{t_n-j}$. So the action of $\tau$ on the isomorphic image of $\mathcal A_{t_n-j}$ is actually trivial, and hence the image of $S_{t_n-j}$ is actually a direct product of $\mathcal A_{t_n-j}$ and $\mathbb Z/2\mathbb Z$, which is absurd. So $\mathcal A_{t_n-j}$ is  in the kernel of the map $S_{t_n-j} \to \Aut(\mathfrak{h}_n)$. So for almost all $n$, $\mathfrak{h}_n$ decomposes as the sum of one-dimensional representations of $S_{t_n-j}$, hence the action of $S_{t_n-j}$ on $\mathfrak h_n$ is actually trivial since the ultraproduct lies in the Deligne category. So we get a contradiction.
\end{proof}

From this proposition it follows that in each case $S_{t_n-j}$ maps into a projective group of the corresponding group of linear transformations of a vector space. But from Lemma 2.2.3  it follows that in each case we have an honest map from $S_{t_n-j}$ to the corresponding group of linear transformations, i.e. a represnetation of $S_{t_n-j}$ on the corresponding vector space.

We have the following proposition:
\begin{prop}
If the size of Dynkin diagram corresponding to  $\mathfrak{h}_n$ is unbounded, then for almost all $n$, $\mathfrak{h}_n = \mathfrak{x}(V_n)$ for the same $\mathfrak x$ ($\mathfrak{x} = \mathfrak{sl}, \mathfrak{psl},\mathfrak{sp}, \mathfrak{so}$). Also there exist $V'_n$ such that $\mathfrak{x}(V_n) =\mathfrak{x}(V'_n)$ and such that $V = \prod_{\mathcal F}V'_n$ is an object of $\Rep_{S_{t-j}}$, hence $\mathfrak h = \mathfrak x(V)$.
\end{prop}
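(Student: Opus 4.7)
The approach is to combine Propositions 4.2.2 and 4.2.3 with Lemmas 2.2.3 and 2.2.6 to present $\mathfrak{h}$ as $\mathfrak{x}(V)$ for a suitable object $V \in \Rep(S_{t-j})$.

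First, since there are only four infinite families of classical Dynkin diagrams, the pigeonhole principle for ultrafilters allows me to assume that almost all $\mathfrak{h}_n$ belong to the same family, say $A, B, C,$ or $D$. In the $D$ case the rank is unbounded, so I may further assume rank $>4$, which makes Proposition 4.2.2 applicable. Combined with Proposition 4.2.3 (to rule out the outer $\mathbb{Z}/2$ factor in type $A$), this produces for almost all $n$ a homomorphism from $S_{t_n-j}$ into $PSL(V_n)$, $PSp(V_n)$, $PSO(V_n)$, or $PO(V_n)$, where $V_n$ denotes the defining representation of the corresponding classical group.

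Next, since $\mathfrak{h} \in \Rep(S_{t-j})$, $\dim \mathfrak{h}_n$ is polynomially bounded in $t_n$, and hence so is $\dim V_n$ (by the dimension formulas for classical Lie algebras). Lemma 2.2.3 therefore upgrades the projective $S_{t_n-j}$-action on $V_n$ to an honest linear action, for almost all $n$. In types $B,C,D$, the invariant bilinear form on $V_n$ is preserved by the $S_{t_n-j}$-action up to a one-dimensional character, which, since the abelianization of $S_{t_n-j}$ is $\mathbb{Z}/2\mathbb{Z}$, must be trivial or sign. Using that all irreducible $S_n$-representations are self-dual with a symmetric invariant form, one can replace $V_n$ by an isomorphic $S_{t_n-j}$-representation on which the form becomes genuinely $S_{t_n-j}$-invariant. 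Hence $\mathfrak{h}_n \cong \mathfrak{x}_n(V_n)$ for $\mathfrak{x}_n \in \{\mathfrak{sl}, \mathfrak{psl}, \mathfrak{sp}, \mathfrak{so}\}$; in type $A$ the dichotomy between $\mathfrak{sl}$ and $\mathfrak{psl}$ is dictated by whether $p_n \mid \dim V_n$, and one more pigeonhole makes $\mathfrak{x} := \mathfrak{x}_n$ uniform in $n$.

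Finally, I invoke Lemma 2.2.6, applied inside $\Rep(S_{t-j})$ rather than $\Rep(S_t)$: given the sequence $V_n \in \textbf{Rep}_{p_n}(S_{t_n-j})$ with $\mathfrak{x}(V_n) \cong \mathfrak{h}_n$ having ultraproduct in the Deligne category, the lemma produces $V'_n$ with $\mathfrak{x}(V_n) \cong \mathfrak{x}(V'_n)$ and $V := \prod_{\mathcal{F}} V'_n$ a genuine object of $\Rep(S_{t-j})$, whence $\mathfrak{h} \cong \mathfrak{x}(V)$. The main obstacle is the passage from a projective action preserving a bilinear form up to a scalar character back to a linear action preserving the form outright; this requires both the correct choice of lift provided by Lemma 2.2.3 and a careful analysis of the possible sign-character twist using the self-duality of $S_{t_n-j}$-irreducibles.
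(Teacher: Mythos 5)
Your proposal follows essentially the same route as the paper: pigeonhole the type to one of $A,B,C,D$, use Propositions 4.2.2 and 4.2.3 to land $S_{t_n-j}$ in the projective classical group, use Lemma 2.2.3 (with the polynomial bound on $\dim V_n$) to linearize, and then feed the resulting data into Lemma 2.2.6 applied with $t-j$, $t_n-j$ in place of $t$, $t_n$. That part is fine and matches the paper's argument.

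The one place where you go beyond the paper — the observation that in types $B,C,D$ the linear lift a priori preserves the form only up to a character $\lambda$ of $S_{t_n-j}$, hence $\lambda$ is trivial or sign — is a legitimate point (the paper simply asserts invariance), but your proposed repair does not work as stated. If the lift satisfies $g^*\omega=\mathrm{sgn}(g)\,\omega$, then passing to an isomorphic $S_{t_n-j}$-representation cannot help: transporting $\omega$ along an equivariant isomorphism yields a form with the same transformation character, and rescaling the lift by a linear character cannot kill the twist either, since the only characters of $S_m$ are trivial and sign and both square to the trivial character; so there is no presentation of $(V_n,\omega)$ on which this $\omega$ ``becomes genuinely invariant.'' Moreover Lemma 2.2.6 genuinely needs invariance, since $\mathfrak{so}(W_n)$, $\mathfrak{sp}(W_n)$ in Definition 2.2.5 are only defined when $\psi\colon W_n\to W_n^*$ is a morphism in the category. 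The correct way to close this step is to rule out $\lambda=\mathrm{sgn}$ for almost all $n$: an anti-invariant form gives an isomorphism $V_n\simeq V_n^*\otimes\mathrm{sgn}$, and the identification of $\mathfrak{h}_n$ with $\wedge^2V_n$ (resp.\ $S^2V_n$) via $\omega$ acquires the twist, $\mathfrak{h}_n\simeq \wedge^2V_n\otimes\mathrm{sgn}$ (resp.\ $S^2V_n\otimes\mathrm{sgn}$). Since by Lemma 2.2.4 every constituent of $V_n$ has a long first row or column, $\wedge^2V_n\otimes\mathrm{sgn}$ (resp.\ $S^2V_n\otimes\mathrm{sgn}$) then contains constituents whose number of rows grows with $t_n$ unless $\dim V_n$ stays bounded, contradicting either $\mathfrak{h}\in\Rep(S_{t-j})$ or the assumed unboundedness of the Dynkin diagrams. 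With this substitution your argument is complete; as written, the ``self-duality'' step is a gap.
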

\begin{proof}
We have established the first part of the proposition. Also from the discussion above it follows that the action of $S_{t_n-j}$ the representation of $S_{t_n-j}$ on $V_n,$ which leaves the correponding bilinear form invariant. 
Now using Lemma 2.2.6 we conclude that such $V_n'$ indeed exist.
\end{proof}

Now we can formulate the following classification theorem.
\begin{thm} \label{lieclass}
To construct a simple Lie algebra in the category $\Rep(S_t)$ one needs to fix an integer $j$, a subgroup $H \subset S_j$ and the Lie algebra $\mathfrak h$ in $\Rep(S_{t-j})$ of one of the following kinds:

$\cdot$ An exceptional Lie algebra which is equal to the sum of unit objects of $\Rep(S_{t-j})$.

$\cdot$ $\mathfrak{sl}(V)$ for any $V$ of dimension not zero, or $\mathfrak{psl}(V)$ for any $V$ of dimension zero.

$\cdot$ $\mathfrak{so}(V)$ or $\mathfrak{sp}(V)$ for any $V$ with a (skew)-symmetric non-degenerate bilinear form.

Also one should fix an action of $H$ on $\mathfrak{h}$ by  Lie algebra automorphisms. Then one can obtain a simple Lie algebra $\mathfrak{g}$ in $\Rep(S_t)$ as $\Ind_{S_{t-j}\times H}^{S_t}(\mathfrak h)$.

Moreover any simple Lie algebra in $\Rep(S_t)$ is isomorphic to one obtained in this way. 

Finally, such a simple Lie algebra is determined uniquely by the above data up to conjugation of $H$ inside $S_j$ and conjugation of action of $H$ inside of $\Aut(\mathfrak h)$.
\end{thm}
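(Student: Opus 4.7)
The proof is an assembly of the case analysis carried out in Subsections 4.1 and 4.2 together with an existence/uniqueness argument modeled on Theorem \ref{assoc}. For the classification direction, I would begin from Proposition 4.0.3, which already presents any simple Lie algebra $\mathfrak{g}\in\Rep(S_t)$ as $\Ind_{S_{t-j}\times H}^{S_t}(\mathfrak{h})$ with $\mathfrak{h}=\prod_{\mathcal F}\mathfrak{h}_n$ for simple Lie algebras $\mathfrak{h}_n\in\mathcal C_n$. Theorem 4.0.1 forces each $\mathfrak{h}_n$ to be of classical or Cartan type; Proposition \ref{nocartan} rules out Cartan type, after which the classical case splits according to whether the rank of the Dynkin diagram of $\mathfrak{h}_n$ is bounded in $n$. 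The bounded case is Proposition 4.2.1 (trivial $S_{t-j}$-action, yielding either exceptional algebras as sums of unit objects, or classical series also captured by $\mathfrak{sl}(V), \mathfrak{psl}(V), \mathfrak{so}(V), \mathfrak{sp}(V)$ with $V$ a sum of unit objects), while the unbounded case is Proposition 4.2.4 (yielding $\mathfrak{sl}(V), \mathfrak{psl}(V), \mathfrak{so}(V), \mathfrak{sp}(V)$ for nontrivial $V$).

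For the existence direction, given one of the listed data $(j,H,\mathfrak{h})$ with a specified $H$-action on $\mathfrak{h}$, I would realize $\mathfrak{h}$ as $\prod_{\mathcal F}\mathfrak{h}_n$ by choosing ultraproduct representatives $V_n\in\textbf{\Rep}_{p_n}(S_{t_n-j})$ for $V$ (and fixed copies of the appropriate Chevalley algebra in the exceptional case). For $\mathfrak{psl}(V)$, the categorical hypothesis $\dim V=0$ in $\Rep(S_{t-j})$ forces $p_n\mid\dim V_n$ for almost all $n$, so $\mathfrak{psl}(V_n)$ is defined and simple; analogous checks apply to $\mathfrak{sl},\mathfrak{so},\mathfrak{sp}$. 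The $H$-action on $\mathfrak{h}$ transfers to an $H$-action by Lie algebra automorphisms on almost all $\mathfrak{h}_n$ since the set of relevant actions up to conjugacy is finite, just as in the proof of Theorem \ref{assoc}. Theorem 4.0.2 shows each $\Ind_{S_{t_n-j}\times H}^{S_{t_n}}(\mathfrak{h}_n)$ is simple in $\mathcal C_n$, so by \lthm the ultraproduct $\Ind_{S_{t-j}\times H}^{S_t}(\mathfrak{h})$ is simple in $\Rep(S_t)$.

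For uniqueness, I would mirror the final paragraph of the proof of Theorem \ref{assoc}: if two sets of data produce isomorphic simple Lie algebras in $\Rep(S_t)$, then almost all of the corresponding finite-rank inductions are isomorphic. The uniqueness clause of Theorem 4.0.2 then supplies conjugacy of $S_{t_n-j}\times H$ with $S_{t_n-j'}\times H'$ inside $S_{t_n}$, and conjugacy of the two actions on the common underlying Lie algebra. Taking $t_n\to\infty$ yields $j=j'$, conjugacy of $H$ and $H'$ inside $S_j$ (since the conjugation must preserve the $S_{t_n-j}$ factor), and conjugacy of the $H$-actions on $\mathfrak{h}$ inside $\Aut(\mathfrak{h})$.

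The hardest step is the classification direction, but essentially all the difficulty has been discharged into Propositions \ref{nocartan}, 4.2.1, 4.2.4 together with Lemma 2.2.6; what remains is bookkeeping the case split and checking that the three bullets of the theorem exhaust the possibilities, in particular that the bounded-rank classical $A,B,C,D$ cases with trivial $S_{t-j}$-action are recovered by applying the $\mathfrak{sl}/\mathfrak{psl}/\mathfrak{so}/\mathfrak{sp}$ bullets to sums of unit objects. A subtler point in the existence argument is guaranteeing that the chosen $V_n$ support the desired $H$-action in a way compatible with the categorical data; here one invokes Lemma 2.2.3 to promote projective $H$-actions to linear ones and the finiteness of conjugacy classes of such actions to conclude that a common choice works for almost all $n$.
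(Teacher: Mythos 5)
Your proposal is correct and follows essentially the same route as the paper: the classification direction is exactly the assembly of Propositions 4.0.3, \ref{nocartan}, 4.2.1 and 4.2.4 (with the dimension-divisibility observation distinguishing $\mathfrak{sl}$ from $\mathfrak{psl}$ and Lemma 2.2.6 supplying $V$), existence follows from Theorem 4.0.2 together with Łoś's theorem, and uniqueness is carried over verbatim from the associative case (Theorem \ref{assoc}). Your extra remark that bounded-rank classical algebras with trivial $S_{t-j}$-action are recovered as $\mathfrak{sl}/\mathfrak{so}/\mathfrak{sp}$ of sums of unit objects is a sensible piece of bookkeeping that the paper leaves implicit, not a deviation in method.
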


\begin{proof}
That the above process gives us a simple Lie algebra is straightforward, since the resulting algebra $\mathfrak{g}$ is an ultraproduct of Lie algebras which are simple due to Theorem 4.0.2.

Now from Propositions 4.0.3, 4.1.6, 4.2.1 and 4.2.4 we conclude that any simple Lie algebra can be obtained in this way. Indeed from these propositions we know that such $\mathfrak h$ exists and is either a trivial representation of $S_{t-j}$ or it is given by $\mathfrak x(V)$. Now note that if $\mathfrak h =\mathfrak{sl}(V)$, the dimension of almost all $V_n$ are not divisible by $p_n$, or the algebra $\mathfrak{sl}(V_n)$ would not be simple for almost all $n$, hence the dimension of $V$, which can be obtained through the isomorphism $\prod_{\mathcal{F}}\overline{\mathbb F}_{p_n} = \mathbb C$ is non-zero. In the case of $\mathfrak{psl}(V)$ on the other hand it is divisible by $p_n$ for almost all $n$ and hence the dimension of $V$ is zero. In the case $\mathfrak x = \mathfrak{so}, \ \mathfrak{sp}$ the $S_{t_n-j}$-module $V_n$ has an $S_{t_n-j}$-invariant (skew)-symmetric bilinear form for almost all $n$, hence it gives us the (skew)-symmetric bilinear form on $V$ in $\Rep(S_t)$. So indeed every simple Lie algebra can be obtained in the specified way.

The proof of the uniqueness is the same as in Theorem 3.0.2.

\end{proof}

\begin{rem}
This theorem can likely be extended to the degenerate case when $t \in \mathbb Z_{\ge 0}$. $\Rep(S_t)$ is not abelian for $t \in \mathbb Z$ so instead one should work with the abelian envelope $\Rep^{ab}(S_t)$ defined in \cite{comes2014deligne}. This abelian envelope has a lot of nice properties, for example it is a highest-weight category (see \cite{barter2017deligne}), and it still can be similarly interpreted via an ultrafilter construction as outlined in \cite{harman2016deligne}. The main difference is that the categories $\Rep^{ab}(S_t)$ are not semisimple so some care needs to be taken in some of the arguments. However the main technical step of ruling out the Cartan type Lie algebras (Proposition \ref{nocartan}) by looking at their dimension growth goes through as is.
\end{rem}

\section{Conjecture concerning classification of simple Lie superalgebras in $\Rep(S_t)$}

We will state a conjectural extension of the main results of this paper to the setting of Lie superalgebras, and outline a possible approach to generalize the methods in this paper. The textbook reference about the theory of Lie superalgebras is \cite{musson2012lie}, it contains the classification of simple Lie superalgebras over $\mathbb C$ and their construction (Chapters 1-2 and 4). See the original paper of Kac $\cite{kac1977lie}$ for the classification.\footnote{The classification problem of finite dimensional simple Lie superalgebras in zero and positive characteristic has a long history, which we in no way attempt to review; thus many important references are not given here.} How these results generalize to the modular case with big enough $p$ can be found in \cite{bouarroudj2015simple} Section 2.3, \cite{leites2007towards} and \cite{leites2009classification} Section 10.

\subsection{Superalgebras in tensor categories and their simplicity in $Vect$}

Below we assume that for every Lie superalgebra $\mathfrak{g} = \mathfrak{g}_0 \oplus \mathfrak{g}_1$, the $\mathfrak{g}_1$ component is non-zero. If $\mathfrak{g}_1$ is zero, then  $\mathfrak{g}$ is a regular Lie algebra and the result of the previous section applies.

First we will need some definitions.

\begin{def0}
 Fix $V,W$ to be non-zero objects of a symmetric rigid tensor category. \\
 \textbf{a)} Define the Lie superalgebra $\mathfrak{gl}(V|W)$ to be the object $(V \oplus W) \otimes (V^* \oplus W^*)$ with the $\mathbb Z/2\mathbb Z$-grading given by $\mathfrak{gl}(V|W)_0 = V \otimes V^* \oplus W \otimes W^*$ and $\mathfrak{gl}(V|W)_1 = V \otimes W^* \oplus W \otimes V^*$. The superbracket $[\ ,\ ]_{i,j}: \mathfrak{gl}(V|W)_i \otimes \mathfrak{gl}(V|W)_j \to \mathfrak{gl}(V|W)_{i+j}$ is given by $\mu -(-1)^{ij} \mu \circ \sigma$, where $\mu$ is the associative algebra multiplication and $\sigma$ is an operator permuting copies of $\mathfrak{gl}(V|W)$. \\
 \textbf{b)} Define the Lie superalgebra $\mathfrak{sl}(V|W)$ to be the subalgebra in $\mathfrak{gl}(V|W)$ given by the kernel of the map $str: \mathfrak{gl}(V|W) \to  \mathfrak{gl}(V|W)_0 \xrightarrow{(ev_V \ , \ -ev_W)} \bold 1$.  \\
 \textbf{c)} Consider the map $l:\bold 1\xrightarrow{coev_V \oplus coev_W} \mathfrak{gl}(V|W)_0$. The image of this map lies in $\mathfrak{sl}(V|W)$ iff $\dim V = \dim W$. In this case define the Lie superalgebra $\mathfrak{psl}(V|W)$ to be the cokernel of $l: \bold 1\to \mathfrak{sl}(V|W)$. \\
 \textbf{d)} Fix a bilinear form on $V \oplus V^*$ specified by the identity map $\psi:V \oplus V^*\to (V\oplus V^*)^* = V^* \oplus V$. Define the Lie superalgebra $\mathfrak{p}(V)$ to be the subalgebra in $\mathfrak{sl}(V|V^*)$ preserving this form, i.e. the kernel of the map $\mathfrak{gl}(V|V^*) \xrightarrow{1 + \sigma \circ (\psi \otimes \psi^{-1})} \mathfrak{gl}(V|V^*)$.  \\
 \textbf{e)} Consider a morphism $c: V \oplus V \to V \oplus V$, given by the matrix $\left( \begin{matrix} 0 & Id \\ -Id & 0 \end{matrix}\right)$. This morphsim can  also be considered as an element (i.e. a map $\bold 1\to \mathfrak{gl}(V|V)$) of $\mathfrak{gl}(V|V)$ using evaluation and coevaluation maps. The Lie superalgebra $\hat{\mathfrak{q}}(V)$ is defined as the centralizer of this  element, i.e. as the kernel of the map $\mathfrak{gl}(V|V) \to \mathfrak{gl}(V|V) \otimes \bold 1\xrightarrow{Id \otimes c} \mathfrak{gl}(V|V) \otimes \mathfrak{gl}(V|V) \xrightarrow{[\ ,\ ]} \mathfrak{gl}(V|V)$. Next we define the Lie superalgebra $\tilde{\mathfrak{q}}(V)$ as the kernel of the map $\hat{\mathfrak{q}}(V) \to \bold 1$ given by the restriction of the map $\mathfrak{gl}(V|V)_1 \xrightarrow{(ev_V \ , \ ev_V)} \bold 1$. Then there is a non-zero map $l:\bold 1\xrightarrow{coev \oplus coev} \tilde{\mathfrak{q}}(V)_0$. The cokernel of this map is a Lie superalgebra $\mathfrak{q}(V)$. \\
 \textbf{f)} Suppose there is a symmetric non-degenerate bilinear form on $V$ and a skew-symmetric non-degenerate bilinear form on $W$. Denote the corresponding maps $\psi_V:V \to V^*$ and $\psi_W: W \to W^*$. Then $\mathfrak{osp}(V|W)$ is the following subalgebra in $\mathfrak{gl}(V|W)$.
 We define $\mathfrak{osp}(V|W)_0$ to be equal to the kernel of the map 
 $$
 \mathfrak{gl}(V|W)_0 \xrightarrow{(Id +\sigma \circ (\psi_V \otimes \psi_V^{-1}))\oplus (Id +\sigma \circ (\psi_W \otimes \psi_W^{-1}))} \mathfrak{gl}(V|W)_0 \ ,
 $$
 and $\mathfrak{osp}(V|W)_1$ to be equal to the kernel of the map
 $$
 \mathfrak{gl}(V|W)_1 = V\otimes W^* \oplus W\otimes V^* \xrightarrow{(\sigma \circ (\psi_V\otimes 1) \ , \ 1 \otimes \psi_W)} W^* \otimes V^* \ .
 $$
\end{def0}

\begin{rem}
These definitions mimic the standard definitions of the above Lie superalgebras in an element-free fashion. It is straightforward to check that this definition agrees with the usual definitions for the category of vector spaces, and that the superbracket descends onto the various kernels and cokernels used in the definition. 
\end{rem}

Now we want to know when exactly these superalgebras are simple for categories $Vect_{k_0}$ and $Vect_{k_p}$. This is explained by the various classification results. Here $k_0$ stands for an algebraically closed field of characteristic $0$ and $k_p$ stands for an algebraically closed field of characteristic $p$.

\begin{prop} (Theorem 1.3.1 in \cite{musson2012lie})
Suppose $V,W$ are non-zero objects of $Vect_{k_0}$.

\textbf{a)} The Lie superalgebra $\mathfrak{sl}(V|W)$ is simple in $Vect_{k_0}$ iff  $\dim(V)\ne \dim(W)$. \\
\textbf{b)} In $Vect_{k_0}$ if $\dim(V) = \dim(W) > 1$ the Lie superalgebra $\mathfrak{psl}(V|W)$ is defined and is simple.  \\
\textbf{c)} The Lie superalgebra $\mathfrak{osp}(V|W)$ is simple in $Vect_{k_0}$. \\
\textbf{d)} The Lie superalgebra $\mathfrak{q}(V)$ is simple in $Vect_{k_0}$ iff $\dim(V) \ge 2$. \\
\textbf{e)} The Lie superalgebra $\mathfrak{p}(V)$ is simple in $Vect_{k_0}$ iff $\dim(V) \ge 2$. 
\end{prop}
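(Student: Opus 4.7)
The plan is to verify each item by the standard module-theoretic argument used in the classical proof (Theorem 1.3.1 of \cite{musson2012lie}): for a candidate simple Lie superalgebra $\mathfrak g = \mathfrak g_0 \oplus \mathfrak g_1$ one analyzes the even part $\mathfrak g_0$ as a Lie algebra and the odd part $\mathfrak g_1$ as a $\mathfrak g_0$-module. Given a non-zero $\mathbb Z/2\mathbb Z$-graded ideal $I \subseteq \mathfrak g$, the $\mathbb Z/2$-grading forces $I = (I \cap \mathfrak g_0) \oplus (I \cap \mathfrak g_1)$; one shows that $I \cap \mathfrak g_1 \ne 0$ (by bracketing any even component with well-chosen elements of $\mathfrak g_1$), then uses irreducibility of $\mathfrak g_1$ over $\mathfrak g_0$ to conclude $\mathfrak g_1 \subseteq I$, and finally verifies that $[\mathfrak g_1, \mathfrak g_1] = \mathfrak g_0$. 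Simplicity then follows immediately.

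First I would handle (a) and (b). The even part of $\mathfrak{sl}(V|W)$ is $\{(A,B) \in \mathfrak{gl}(V) \oplus \mathfrak{gl}(W) : \mathrm{tr}(A) = \mathrm{tr}(B)\}$, and the odd part $V \otimes W^* \oplus W \otimes V^*$ is an irreducible module. The key observation is that $(I_V,I_W)$ lies in $\mathfrak{sl}(V|W)$ and is central if and only if $\dim V = \dim W$; this single calculation explains both the failure of simplicity when dimensions coincide and the need to quotient by this $1$-dimensional center to obtain $\mathfrak{psl}(V|W)$. The hypothesis $\dim V = \dim W > 1$ in (b) is used to ensure $\mathfrak{sl}(V), \mathfrak{sl}(W)$ are themselves simple, guaranteeing irreducibility of the odd part after quotienting.

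For (c), the even part $\mathfrak{so}(V) \oplus \mathfrak{sp}(W)$ is a semisimple Lie algebra and the odd part $V \otimes W$ is an irreducible external tensor product, so the general strategy applies directly; the only thing to check is that $[\mathfrak{osp}_1, \mathfrak{osp}_1]$ surjects onto both simple summands of $\mathfrak{osp}_0$, which is a straightforward bilinear computation using $\psi_V$ and $\psi_W$. For (d) and (e) I would carefully unwind the definitions: in $\mathfrak{q}(V)$ both $\mathfrak g_0$ and $\mathfrak g_1$ are copies of $\mathfrak{sl}(V)$ (after quotienting by the trace element), and the superbracket in the $(1,1)$-sector becomes the anticommutator in $\mathfrak{gl}(V)$, projected suitably; in $\mathfrak{p}(V)$ the odd part decomposes as $S^2V \oplus \Lambda^2 V^*$ as an $\mathfrak{sl}(V)$-module. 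In both cases the hypothesis $\dim V \ge 2$ ensures $\mathfrak{sl}(V)$ is non-trivial and the odd part is a non-trivial irreducible module.

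The main obstacle will be the detailed verification for $\mathfrak{q}(V)$ and $\mathfrak{p}(V)$, where the even part does not act on the odd part as a simple outer tensor product, and irreducibility of $\mathfrak g_1$ requires ruling out obvious submodules (e.g.\ the copy of the trace in the $\mathfrak q$ case and the individual pieces $S^2V$, $\Lambda^2 V^*$ in the $\mathfrak p$ case need to be shown inseparable under the full superbracket). Since none of these steps involve new ideas beyond the classical classification and the paper is only invoking the result as a statement from \cite{musson2012lie}, I would keep the exposition brief and refer the reader to that source for the detailed case analysis, sketching only the structure of the argument above.
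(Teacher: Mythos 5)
The paper does not actually prove this proposition; it is invoked verbatim as Theorem~1.3.1 of \cite{musson2012lie}, so there is no internal proof to compare your sketch against. Your outline is the standard textbook strategy and is essentially what one finds in Musson, so the general approach is fine. However, one concrete inaccuracy would derail the argument as written: you assert that the odd part $V\otimes W^*\oplus W\otimes V^*$ of $\mathfrak{sl}(V|W)$ is an irreducible $\mathfrak g_0$-module. It is not; it is a direct sum of \emph{two} non-isomorphic irreducibles (they are contragredient to each other under the supertrace form). The simplicity argument therefore cannot conclude $\mathfrak g_1\subseteq I$ from ``irreducibility of $\mathfrak g_1$''; instead one shows that an ideal meeting one odd summand, after bracketing into $\mathfrak g_0$ and back out, must also contain the other summand. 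The same issue recurs in $\mathfrak p(V)$, where you correctly note that $\mathfrak g_1\cong S^2V\oplus\Lambda^2V^*$, which again is reducible, so the ``irreducibility of $\mathfrak g_1$'' template has to be replaced by a two-step bracket chase there too. This is a routine repair and does not change the shape of the proof, but as stated the sketch invokes a false premise at a key step. Given that the paper is only citing the result, I would simply leave the citation and not attempt to reproduce the argument, but if you do want to sketch it, make sure the decomposition of $\mathfrak g_1$ into its irreducible constituents is stated correctly in each case.
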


\begin{prop} (Section 10 in \cite{leites2009classification}, Section 6 in \cite{bouarroudj2008deforms}, Section 4.1 in \cite{bouarroudj2013derivations})
Suppose $V,W$ are non-zero objects of $Vect_{k_p}$.

\textbf{a)} The Lie superalgebra $\mathfrak{sl}(V|W)$ is  simple in $Vect_{k_p}$ iff $\dim(V) \ne \dim(W) \  mod \ p$ (See Section 10 in \cite{leites2009classification}).\\
\textbf{b)} In $Vect_{k_p}$ if $\dim(V) = \dim(W) \ mod \ p$ and $\dim(V),\dim(W)> 1$, the Lie superalgebra $\mathfrak{psl}(V|W)$ is defined and is simple (See Section 10 in \cite{leites2009classification}). \\
\textbf{c)} The Lie superalgebra $\mathfrak{osp}(V|W)$ is simple in $Vect_{k_p}$. (See Section 10 in \cite{leites2009classification}). \\
\textbf{d)} The Lie superalgebra $\mathfrak{q}(V)$ is simple in $Vect_{k_p}$ iff $\dim(V) \ge 2$. 
\\
\textbf{e)} The Lie superalgebra $\mathfrak{p}(V)$ is simple in $Vect_{k_p}$ iff $\dim(V) \ge 2$.
\end{prop}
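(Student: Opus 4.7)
The plan is to exploit the $\mathbb Z/2\mathbb Z$-grading $\mathfrak g = \mathfrak g_0 \oplus \mathfrak g_1$ of each superalgebra and reduce to questions about $\mathfrak g_0$-module structure on $\mathfrak g_1$. First I would observe that every ideal $I \subset \mathfrak g$ is automatically graded (since the parity operator acts as an even derivation), so $I = I_0 \oplus I_1$ with each $I_i$ a $\mathfrak g_0$-submodule of $\mathfrak g_i$. Simplicity then reduces to three ingredients: (i) decomposing $\mathfrak g_1$ into irreducible $\mathfrak g_0$-submodules and identifying which combinations can be closed under bracketing with $\mathfrak g_1$; (ii) verifying that $[\mathfrak g_1, \mathfrak g_1]$ generates $\mathfrak g_0$ modulo its center; and (iii) pinning down exactly when a one-dimensional center appears, so that quotienting it out (as in the construction of $\mathfrak{psl}$ and $\mathfrak q$) yields a simple algebra.

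For cases (a) and (b), $\mathfrak{sl}(V|W)_0 = \mathfrak{sl}(V) \oplus \mathfrak{sl}(W) \oplus Z$, where $Z$ is one-dimensional precisely when the supertrace of the identity, equal to $\dim V - \dim W \bmod p$, vanishes, and zero otherwise. The odd part $V\otimes W^* \oplus W\otimes V^*$ splits into two $\mathfrak g_0$-summands which are irreducible and swapped by the odd bracket. I would check that any nonzero graded ideal must contain a nonzero element of one summand, then propagate this through $\mathfrak g_1$ via the $\mathfrak g_0$-action, and finally use $[\mathfrak g_1, \mathfrak g_1]$ to recover all of $\mathfrak g_0/Z$. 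The remaining question is whether $Z$ itself is hit, and the answer is exactly the mod-$p$ dimensional condition in the statement. Case (c) follows the same template with $\mathfrak g_0 = \mathfrak{so}(V) \oplus \mathfrak{sp}(W)$ and $\mathfrak g_1 = V \otimes W$, which is already irreducible over $\mathfrak g_0$; there is no center to worry about.

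Cases (d) and (e) require more bookkeeping because $\mathfrak g_0$ and $\mathfrak g_1$ are carved from $\mathfrak{gl}(V|V)$ or $\mathfrak{gl}(V|V^*)$ by extra conditions. For $\mathfrak q(V)$, $\mathfrak g_0$ turns out to be $\mathfrak{gl}(V)$ acting diagonally, while the odd part of $\tilde{\mathfrak q}(V)$ is the traceless part of $\mathfrak{gl}(V)$ under the adjoint action; the identity matrix appears as a central element in $\tilde{\mathfrak q}(V)_0$ and must be quotiented out to obtain simplicity. For $\mathfrak p(V)$, the odd part decomposes as $S^2 V \oplus \Lambda^2 V^*$ under $\mathfrak{gl}(V)$, each irreducible provided $\dim V \ge 2$; below this threshold the summands collapse, producing the stated exception. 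In both cases I would explicitly compute the brackets of the two odd components and check they surject onto the even part modulo its center.

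The main obstacle is verifying that the characteristic-$p$ versions of the required irreducibility and generation statements still hold. In particular, $S^2 V$ and $\Lambda^2 V$ can acquire unexpected $\mathfrak{gl}(V)$-submodules in small characteristic, and the natural $\mathfrak{sl}$-modules may fail to be irreducible when $p$ divides the dimension. Since the paper's setup restricts to $p > n$ (and the cited references take $p$ large relative to all dimensions involved), these pathologies do not occur and the characteristic-zero arguments of \cite{musson2012lie} transfer essentially verbatim, with the sole additional bookkeeping being the mod-$p$ reduction of the dimensional conditions for (a) and (b).
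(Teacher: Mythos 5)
The paper itself does not prove this proposition: it is stated with citations to \cite{leites2009classification}, \cite{bouarroudj2008deforms}, and \cite{bouarroudj2013derivations}, and no proof is given in the text. So there is no ``paper's own proof'' to compare against; your sketch is filling in material the authors outsource to the literature.

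As a standalone outline, the template is the right one for the classical series: decompose each superalgebra as $\mathfrak g_0 \oplus \mathfrak g_1$, identify $\mathfrak g_1$ as a $\mathfrak g_0$-module, and chase a putative nonzero graded ideal through $[\mathfrak g_1,\mathfrak g_1]$ and the $\mathfrak g_0$-action. The identification of the one-dimensional center of $\mathfrak{sl}(V|W)_0$ with the locus $\dim V \equiv \dim W \pmod p$ is exactly the point that turns the characteristic-$0$ dichotomy $m \ne n$ versus $m = n$ into the mod-$p$ dichotomy of parts (a)/(b). Two things to tighten, though. First, your justification that ``every ideal is automatically graded because the parity operator acts as an even derivation'' is not correct: parity is an involutive \emph{automorphism}, not a derivation, and not every (ungraded) ideal is stable under it. This is moot only because simplicity of a Lie superalgebra is, by definition, the absence of nontrivial $\mathbb Z/2\mathbb Z$-\emph{graded} ideals, so you should appeal to the definition rather than to a false automaticity claim. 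Second, your closing paragraph appeals to ``$p$ large relative to all dimensions involved'' to transfer the characteristic-$0$ irreducibility arguments. That is a genuinely different (and weaker) statement than the proposition, which is asserted for fixed $p$ with the only constraint being the mod-$p$ dimensional conditions; the cited references prove the simplicity results for all odd $p$ above a small fixed bound, not just $p \gg \dim V, \dim W$. The ``pathologies'' you mention (e.g. $S^2V$, $\Lambda^2 V$ acquiring submodules, natural modules failing irreducibility when $p \mid \dim V$) really do occur in the regime the proposition covers, and handling them is precisely the content of the references; ``transfers essentially verbatim'' understates this. If you want a self-contained argument rather than a citation, you would need to address those cases rather than assume them away.
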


We have some results about the classification of all such superalgebras.
\begin{thm} (Theorem 1.3.1 in \cite{musson2012lie}, Section 4.2 in \cite{kac1977lie})
Let $\mathfrak{g}$ be a finite dimensional simple Lie superalgebra over $k_0$. Then it is either given by one of the examples of Proposition 5.1.2 or by one of the exceptional Lie superalgebras $\mathfrak{d}(2,1; \alpha)$, $\mathfrak{f}(4)$ or $\mathfrak{g}(3)$ or by one of the Cartan type superalgebras $W(n)$ , $S(n)$, $\tilde{S}(n)$ and $H(n)$.
\end{thm}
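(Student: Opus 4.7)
The plan is to follow Kac's original strategy from \cite{kac1977lie}. First I would reduce to the case where the odd part $\mathfrak{g}_1$ is non-zero and split the analysis according to whether the adjoint action of the even part $\mathfrak{g}_0$ on $\mathfrak{g}_1$ is completely reducible. Call these the \emph{classical} and \emph{Cartan type} cases respectively. This dichotomy is the analog of the semisimple-versus-Cartan split in characteristic $p$ that already appeared in Theorem 4.0.1 of the excerpt, and in fact the Cartan type superalgebras will turn out to live on Grassmann (i.e.\ exterior) algebras in characteristic $0$, mimicking the divided-power construction $\mathcal O(m,\underline{n})$ of Definition 4.1.1.

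For the classical case, I would first show $\mathfrak{g}_0$ is reductive, using that any solvable ideal of $\mathfrak{g}_0$ would generate a proper ideal of $\mathfrak{g}$. Next I would split further according to whether $\mathfrak{g}$ admits a non-degenerate invariant supersymmetric bilinear form. The cases that do are the \emph{basic} Lie superalgebras; for these I would pick a Cartan subalgebra $\mathfrak{h} \subset \mathfrak{g}_0$ and produce a $\mathbb Z/2\mathbb Z$-graded root space decomposition $\mathfrak{g} = \mathfrak{h} \oplus \bigoplus_\alpha \mathfrak{g}_\alpha$. Imitating Killing-Cartan, each root is either even or odd, and one extracts a generalized Cartan matrix with parity data at the nodes. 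A direct classification of such matrices, ruling out exceptional coincidences, produces exactly the series $\mathfrak{sl}(V|W)$ (with the quotient $\mathfrak{psl}(V|W)$ appearing when $\dim V = \dim W$), $\mathfrak{osp}(V|W)$, and the three exceptional superalgebras $\mathfrak{d}(2,1;\alpha)$, $\mathfrak{f}(4)$, $\mathfrak{g}(3)$. The two \emph{strange} families left over, where no invariant form exists, I would identify by a case analysis of the possible $\mathfrak{g}_0$-module structure on $\mathfrak{g}_1$: the only options turn out to be $\mathfrak{p}(V)$ (with $\mathfrak{g}_0 = \mathfrak{sl}(V)$ and $\mathfrak{g}_1 = S^2 V \oplus \Lambda^2 V^*$) and $\mathfrak{q}(V)$ (with $\mathfrak{g}_0 = \mathfrak{sl}(V)$ and $\mathfrak{g}_1$ the adjoint representation).

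For the Cartan type case, I would use that non-complete reducibility of $\mathfrak{g}_0 \curvearrowright \mathfrak{g}_1$ forces the existence of a canonical Weisfeiler filtration on $\mathfrak{g}$; passing to the associated graded produces a $\mathbb Z$-graded simple Lie superalgebra whose degree zero piece acts irreducibly and faithfully on the degree $-1$ piece. Weisfeiler's theorem combined with a careful analysis of the possible graded simple pieces matches these against the derivation algebra $W(n) = \mathrm{Der}(\Lambda(n))$ of the Grassmann algebra and its natural subalgebras $S(n)$, $\tilde S(n)$, $H(n)$ (divergence-free and Hamiltonian derivations). Finally, reconstruction of the filtered superalgebra from its graded piece shows the filtration is split in this characteristic, recovering $\mathfrak{g}$ itself.

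The hard step, by far, is the classification of basic Lie superalgebras: setting up the analog of root datum carefully enough to handle inequivalent choices of simple roots (since Weyl-group orbits no longer act transitively on bases in the super setting), bookkeeping of isotropic odd roots, and exhibiting the one-parameter family $\mathfrak{d}(2,1;\alpha)$ as a genuine continuous family rather than an isolated exceptional case. Everything else — showing simplicity of the listed candidates, verifying that the strange series are complete, and running the Weisfeiler reduction in Cartan type — is comparatively mechanical once the root-theoretic framework is in place.
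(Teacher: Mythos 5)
This theorem is a cited result (Kac's classification, Theorem 1.3.1 in Musson's book), and the paper offers no proof of it — so there is no internal proof for your sketch to be compared against. Your outline does, however, track the overall shape of Kac's original argument reasonably well: the split into classical versus Cartan type via complete reducibility of $\mathfrak{g}_0 \curvearrowright \mathfrak{g}_1$, reductivity of $\mathfrak{g}_0$ in the classical case, the further subdivision by existence of an invariant form into basic versus strange, the Cartan matrix classification for the basic case, and the Weisfeiler filtration for the Cartan type case are all genuine ingredients of the proof, and the module structures you write down for $\mathfrak{p}(V)$ and $\mathfrak{q}(V)$ are correct.

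The one concrete error is in your final Cartan-type step. You claim that reconstruction of the filtered superalgebra from its associated graded ``shows the filtration is split in this characteristic, recovering $\mathfrak{g}$ itself,'' but this is false, and the falsity is visible in your own list: $\tilde S(n)$ is precisely a nontrivial filtered deform of $S(n)$ with the same associated graded. If the filtration always split, $\tilde S(n)$ would not appear as a separate simple Lie superalgebra. The correct statement is that the reconstruction step requires classifying filtered deformations of the graded candidates, and this is where $\tilde S(n)$ (for $n$ even) enters. The rest of your sketch would survive this correction, but as written the argument would conclude a list missing $\tilde S(n)$, contradicting the statement you set out to prove.
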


\begin{rem}
The Lie superalgebras $\mathfrak{d}(2,1;\alpha)$ form a one-parametric series of superalgebras of the same dimension.
\end{rem}

\begin{conj} (Conjecture 1 in \cite{leites2007towards})
Let $\mathfrak{g}$ be a finite dimensional simple Lie superalgebra over $k_p$ with $p \ge 7$. Then it is either given by  one of the examples of Proposition 5.1.3 or by one of the exceptional Lie superalgebras or by a certain algebra of Cartan type.
\end{conj}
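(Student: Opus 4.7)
The plan is to mirror Kac's characteristic-zero classification while accounting for the new phenomena forced by $p$ being finite. The fundamental dichotomy to set up is: either the even part $\mathfrak{g}_0$ acts faithfully and close-to-reductively on the odd part $\mathfrak{g}_1$ (the \emph{classical-type} case), or the representation of $\mathfrak{g}_0$ on $\mathfrak{g}_1$ admits a large invariant flag (forcing a Weisfeiler filtration and leading to Cartan type). In characteristic $p \geq 7$, the restriction $p > 5$ ensures that Theorem 4.0.1 applies to the semisimple quotients of $\mathfrak{g}_0$, so each simple factor of $\mathfrak{g}_0/\mathrm{rad}$ lies on one of the known lists.

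First I would, for a candidate simple $\mathfrak{g}$, construct the standard filtration by choosing a maximal subalgebra $\mathfrak{g}^{(0)}$ containing $\mathfrak{g}_0$ if possible, and otherwise a maximal graded subalgebra whose negative part is generated by $\mathfrak{g}_{-1}$. Passing to the associated graded $\mathrm{gr}\, \mathfrak{g}$ yields a $\mathbb{Z}$-graded Lie superalgebra whose zero component acts irreducibly on $\mathfrak{g}_{-1}$; this is where the modular analog of Weisfeiler's theorem plays the role it plays in Kac's paper. If $\mathrm{gr}\,\mathfrak{g}$ is of depth one and its $\mathfrak{g}_0$-module $\mathfrak{g}_{-1}$ admits a nondegenerate $\mathfrak{g}_0$-invariant bilinear form, the bracket $[\ ,\ ]_{1,1}\colon S^2 \mathfrak{g}_1 \to \mathfrak{g}_0$ constrains $\mathfrak{g}$ to the series $\mathfrak{sl}(V|W)$, $\mathfrak{psl}(V|W)$, $\mathfrak{osp}(V|W)$, $\mathfrak{p}(V)$, $\mathfrak{q}(V)$ of Proposition 5.1.3, or to one of the exceptional superalgebras, by matching Dynkin-type data; the case analysis is a careful bookkeeping using the modular Steinberg representation structure. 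If instead the depth exceeds one, the $\mathfrak{g}_0$-module structure together with transitivity of the graded action forces $\mathfrak{g}$ to be of Cartan type, classified by the modular analog of Kac's theorem on transitive graded Lie superalgebras.

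The principal obstacle is that small characteristic genuinely produces new exceptional and Cartan-type simple Lie superalgebras (as documented by Bouarroudj, Grozman and Leites) that have no characteristic-zero counterpart, and ruling out further sporadic examples outside any known list is the hard part: the $p$-mapping on $\mathfrak{g}_0$, the presence of nontrivial $\mathrm{Ext}^1$'s between simple $\mathfrak{g}_0$-modules, and the failure of complete reducibility all mean that the purely group-theoretic arguments that suffice over $\mathbb{C}$ must be replaced by delicate cohomological or deformation-theoretic arguments. A complete proof would likely require an input of the strength of a modular Weisfeiler theorem for Lie superalgebras together with an exhaustive deformation-theoretic analysis of the infinitesimal neighborhoods of each candidate on the list, which is why the statement remains a conjecture; for the purposes of the present paper I would be content to treat 5.1.6 as a black box and propagate its consequences to $\Rep(S_t)$ exactly as Theorem \ref{lieclass} is obtained from Theorem 4.0.1.
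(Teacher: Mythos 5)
This statement is not proved in the paper at all: it is Conjecture~5.1.5, imported verbatim as ``Conjecture~1'' from \cite{leites2007towards}, and the authors explicitly treat it as an open input in their sketch of Conjecture~5.2.1. There is therefore no ``paper's own proof'' against which to compare your attempt, and you should not be trying to supply one.

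Your write-up is essentially a plausible roadmap toward the modular super-Kac classification (standard filtration/Weisfeiler grading, dichotomy between classical and Cartan type, deformation-theoretic elimination of sporadics), and you are right that the genuine obstacles --- failure of complete reducibility of $\mathfrak{g}_0$-modules, nontrivial $\mathrm{Ext}^1$'s, the $p$-structure, and new small-characteristic exceptionals --- are exactly what make this hard and unresolved. But no step of your outline is actually carried out, and several of the ingredients you invoke (a modular Weisfeiler theorem for superalgebras with the needed strength, an exhaustive deformation analysis) are themselves not established in the literature. So this is not a proof, nor a gap-free sketch; it is a research program. Your last paragraph draws the right conclusion: for the purposes of this paper the conjecture is a black box, and the honest move is to say so up front rather than to present a speculative argument for a statement the authors deliberately left as conjectural.
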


\subsection{Lie superalgebras in $\Rep(S_t)$}

Using Definition 5.1.1 we can construct the Lie superalgebras in  the category $\Rep(S_t)$ as follows. Fix an integer $j$, a subgroup $H \subset S_j$ and a Lie superalgebra $\mathfrak h$ in $\Rep(S_{t-j})$ of one of the following kinds:

$\cdot$ An exceptional or a Cartan type Lie superalgebra which as an object of $\Rep(S_{t-j})$ is equal to the sum of unit objects.

$\cdot$ $\mathfrak{sl}(V|W)$ for $V,W$ such that $\dim(V) \ne \dim(W)$ and $V,W \ne 0$.

$\cdot$ $\mathfrak{psl}(V|W)$ for $V,W$ such that $\dim(V) = \dim(W)$ and both objects are not $0$ or $\bold 1$.

$\cdot$ $\mathfrak{osp}(V|W)$ for any pair of non-zero objects $V,W$ with non-degenerate billinear form, which is symmetric and skew-symmetric respectively.

$\cdot$ $\mathfrak{q}(V)$ for $V$ not $0$ or $\bold 1$.


$\cdot$ $\mathfrak{p}(V)$ for $V$ not $0$ or $\bold 1$.


Also fix an action of $H$ on $\mathfrak{h}$ by  Lie superalgebra automorphisms. Then one can obtain a simple Lie superalgebra $\mathfrak{g}$ in $\Rep(S_t)$ as $\Ind_{S_{t-j}\times H}^{S_t}(\mathfrak h)$.

Now using this we can state the conjecture similar to Theorem 4.2.5.

\begin{conj}

Any simple Lie superalgebra in $\Rep(S_t)$ is isomorphic to one obtained in the way discussed above. 

Such a simple Lie superalgebra is determined uniquely by the above data up to conjugation of $H$ inside $S_j$ and conjugation of action of $H$ inside of $\Aut(\mathfrak h)$.
\end{conj}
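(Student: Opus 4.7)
The plan follows the framework of Theorem 4.2.5 almost verbatim, modulo several technical ingredients that need to be adapted to the super setting. First, by \lthm in the form used in Proposition 4.0.3, a simple Lie superalgebra $\mathfrak{g}\in\Rep(S_t)$ should arise as an ultraproduct $\mathfrak{g}=\prod_{\mathcal F}\mathfrak{g}_n$ of simple Lie superalgebras in $\textbf{Rep}_{p_n}(S_{t_n})$; the super analog of Theorem 4.0.2 (valid in any rigid symmetric tensor category over an algebraically closed field, cf.\ \cite{etingof2017semisimple}) together with Lemma 2.2.1 then lets us write $\mathfrak{g}=\Ind^{S_t}_{S_{t-j}\times H}(\mathfrak{h})$ where $\mathfrak{h}=\prod_{\mathcal F}\mathfrak{h}_n$ and each $\mathfrak{h}_n$ is a simple Lie superalgebra in the category of super vector spaces over $\overline{\mathbb F}_{p_n}$. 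From this point on the task is to identify $\mathfrak{h}$.

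Assuming Conjecture 5.1.5, almost all $\mathfrak{h}_n$ are either classical (one of the items in Proposition 5.1.3), exceptional, or of Cartan type. The next step is to rule out Cartan type by the analog of Proposition \ref{nocartan}. The characteristic-$p$ Cartan superalgebras are built from divided-power algebras $\mathcal O(m,\underline{n})$ possibly tensored with a Grassmann algebra on finitely many odd variables, so their automorphism groups still fit in an exact sequence with solvable kernel surjecting onto a subgroup of $GL(m,\overline{\mathbb F}_{p_n})$, and their dimensions still grow as $p_n^{\sum n_i}$ up to bounded factors. The dichotomy used in Proposition \ref{nocartan} then applies: the image of $S_{t_n-j}$ in $GL(m,\overline{\mathbb F}_{p_n})$ is either trivial or not, forcing either an unbounded length decomposition of $\mathfrak{h}_n$ into one-dimensional $S_{t_n-j}$-summands, or an exponentially growing $\dim\mathfrak{h}_n$, both of which are incompatible with $\mathfrak{h}\in\Rep(S_{t-j})\boxtimes\Rep(H)$.

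For exceptional $\mathfrak{h}_n$ the argument of Proposition 4.2.1 applies verbatim: $\Aut(\mathfrak{h}_n)$ has bounded rank, so $S_{t_n-j}$ acts through trivial summands (sign summands are excluded by Remark 1.4.3) and $\mathfrak{h}$ is a sum of unit objects. For classical $\mathfrak{h}_n$ of unbounded rank, one uses Lemma 2.2.3 to lift the projective action on the underlying super vector space to a linear one, handling the outer automorphism $\tau$ in the $\mathfrak{sl}/\mathfrak{psl}$ case exactly as in Proposition 4.2.3 and the analogous outer $\mathbb{Z}/2\mathbb{Z}$ of $\mathfrak{q}$ in the same way; then a super version of Lemma 2.2.6 produces objects $V,W\in\Rep(S_{t-j})$ carrying the required extra structure (a symmetric bilinear form on $V$ and a skew-symmetric one on $W$ in the $\mathfrak{osp}$ case, and the self-dual pairing $V\oplus V^*\simeq (V\oplus V^*)^*$ in the $\mathfrak{p}$ case). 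The complex dichotomy $\dim V=\dim W$ versus $\dim V\ne\dim W$ matches the congruence $\dim V_n\equiv\dim W_n\pmod{p_n}$ in Proposition 5.1.3, exactly as $\mathfrak{sl}$ and $\mathfrak{psl}$ are separated in Theorem 4.2.5. Uniqueness of the data is deduced as in the proof of Theorem 3.0.2, again using the super analog of Theorem 4.0.2.

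The main obstacles are two-fold. Foremost, the entire argument is conditional on Conjecture 5.1.5: a clean and complete classification of simple modular Lie superalgebras at $p\gg 0$ is not yet in the literature, and the behaviour of the Cartan-type series in positive characteristic is less uniform than in the Lie algebra case handled by \cite{strade2004simple}. The most delicate technical point is producing, uniformly across all the modular Cartan-type families, an analog of the solvable-kernel exact sequence (2) surjecting onto $GL(m,\overline{\mathbb F}_{p_n})$ that makes the Cartan-type dimension-growth obstruction bite. A secondary technical issue is formulating and proving the super version of Lemma 2.2.6 for the less-familiar ``strange'' series $\mathfrak{q}$ and $\mathfrak{p}$: their automorphism groups differ from those in the purely even case, so one must check that the $S_{t_n-j}$-equivariant decompositions of $V_n$ (and, in the $\mathfrak{osp}$ case, of $W_n$) can still be rigidified modulo sign twists before passing to the ultraproduct.
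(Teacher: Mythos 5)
Your proposal mirrors the paper's own sketch of proof step for step: the reduction via Lemma 2.2.1 and the super analog of Theorem 4.0.2 to an induced algebra $\Ind^{S_t}_{S_{t-j}\times H}(\mathfrak h)$, the exclusion of Cartan type via dimension growth, the bounded-rank trivial-action argument for exceptional types, the use of Lemma 2.2.3 and a super version of Lemma 2.2.6 for the classical families, and the uniqueness argument modeled on Theorem 3.0.2, all while acknowledging the same conditional reliance on Conjecture 5.1.5 for algebraic $t$. This is essentially the same approach as the paper's sketch.
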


\begin{sproof}

Below is a sketch of the proof for transcendental $t$. In the case of algebraic $t$ the proof might be similar but relies on Conjecture 5.1.5.

The steps of the proof are the same as in Theorem 4.2.5. 

First using the analogue of Proposition 4.0.3 it is easy to see that indeed $\mathfrak{g} = \Ind_{S_{t-j}\times H}^{S_t}(\mathfrak h)$ for some simple Lie superalgebra $\mathfrak{h}$ which is given by the ultraproduct of simple Lie superalgebras $\mathfrak{h}_n$ which remain simple when considered as an object of $Vect$.

The next step is a little bit more vague, since we need to rule out the possibility of almost all $\mathfrak{h}_n$ being of Cartan type with a non-trivial action of $S_{t_n-j}$(analogue of Proposition 4.1.6). This requires some work in the case of algebraic $t$, but is straightforward for transcendental $t$, since then all $\mathfrak{h}_n$ are Lie superalgebras in characteristic $0$ and the dimension of Cartan type superalgebras grows exponentially with $n$ (hence their rank is bounded, hence almost all actions of $S_{t_n-j}$ are trivial).

Next as an analogue of Proposition 4.2.1 it is easy to show that if the dimensions of $\mathfrak{h}_n$ are bounded, then the action of $S_{t_n-j}$ will be trivial for almost all $n$.

Another step is to show that  $S_{t_n-j}\times H$ acts by inner automorphisms of $\mathfrak{h}_n$ for almost all $n$. This is done in a way similar to Proposition 4.2.3 since the groups of outer automorphisms of these superalgebras are some small groups.

The last step is the analogue of Proposition 4.2.4 which is based on the Lemma 2.2.6. To do this one needs to extend Lemma 2.2.6 to cover some more examples relevant for the superalgebras case, which can be done in a similar way.

After all this is done the claim will follow in the same way as in Theorem 4.2.5.

\end{sproof}

\bibliographystyle{alpha}
\bibliography{biblio}

\end{document}